\newcommand{\Z}{\mathbb{Z}}
\newcommand{\C}{\mathbb{C}}
\newcommand{\R}{\mathbb{R}}
\newcommand{\Hilb}{\mathcal{H}}
\renewcommand{\H}{\mathcal{H}}
\newcommand{\K}{\mathcal{K}}
\newcommand{\norm}[2]{\left\| #2 \right\|_{#1}}
\newtheorem{lemma}{Lemma}[section]
\newtheorem{theorem}[lemma]{THEOREM}
\newtheorem{definition}[lemma]{Definition}
\newtheorem{corollary}[lemma]{Corollary}
\newtheorem{remark}[lemma]{Remark}
\title[The $\alpha$-Modulation Transform]{The $\alpha$-Modulation Transform: Admissibility, Coorbit Theory and Frames of Compactly Supported Functions}
\author{M. Speckbacher, D. Bayer, S. Dahlke, and P. Balazs}
\begin{document}

\begin{abstract}
\noindent
The $\alpha$-modulation transform is a time-frequency transform generated by
square-integrable representations of the affine Weyl-Heisenberg group modulo suitable subgroups. 
In this paper we prove new
conditions that guarantee the admissibility of a given window function.
We also show that the generalized coorbit theory can be applied to this setting, assuming specific regularity of the windows.
This then yields canonical constructions of Banach frames and atomic decompositions in 
%the coorbit spaces of the 
% $\alpha$-modulation transform which can be identified with 
$\alpha$-modulation spaces. 
In particular, we prove the existence of compactly supported 
(in time domain) vectors that are admissible and satisfy all conditions within the coorbit machinery, which considerably go beyond known results. 

\end{abstract}

\maketitle
\vspace{2cm}
\textbf{Math Subject Classification:} 42C15, 42C40, 46E15 (primary), 46E35, 57S25 (secondary).\\
\textbf{Keywords:} coorbit theory, $\alpha$-modulation transform, frame theory, compactly supported windows.\\
\textbf{Submitted to:} Monatshefte f{\"u}r Mathematik

\newpage

%%%%%%%%%%%%%%%%%%%%%%%%%%%%%%%%%%%%%%%%%%%

\section{Introduction}

The main purpose of  time-frequency analysis is to decompose 
given signals and functions such that their significant characteristics are revealed. 
In audio applications for 
example one seeks to unveil the time evolution of the frequency components of a piece of music. 
In many acoustical signals both tonal as well as impact sound components appear. 
 For example,  the
solution of the acoustic scattering problem can contain harmonic as well as non-smooth components, depending on
the geometry of the scatterer.
Harmonic components are well represented by Gabor systems \cite{feistro1}, transient components by wavelet systems 
\cite{daubech1}. 
So it seems natural to use a representation, which is in some sense intermediate between the Gabor and wavelet 
setting, combining the strengths of both worlds. 
%This can be done by the $\alpha$-modulation transform.

In a first approach to construct such a transform, one can consider representations of the affine Weyl-Heisenberg group  
$G_{aWH}$ which contains both the 
affine group (wavelets) and the Weyl-Heisenberg group (Gabor). However, Torr{\'e}sani \cite{torr1,torr2} showed 
that the 
representations of $G_{aWH}$ are not square integrable. Thus, quotients  of $G_{aWH}$ have to be considered.
The $\alpha$-modulation transform is a particular choice of this construction. 
It depends on the parameter $\alpha\in[0,1)$, where $\alpha=0$ corresponds to 
 the Gabor setting and the limiting case $\alpha\rightarrow 1$ corresponds to a wavelet-like transform, 
 see \cite{DahForRau+2008,spexxl14}.

%It is to be expected that $\alpha$-modulation frames for
%some intermediate $\alpha$ between 
%$0$ and $1$ may be able to preserve and combine the good properties of Gabor and wavelet frames into one single framework, allowing to  
%analyze both harmonic and transient signal parts well.
%For numerical applications, finite domains (e.g. the surface of a scatterer) are necessary, so the assumption of a 
%band-limited generating window function as in \cite{DahForRau+2008} cannot be taken. In this paper we lift this assumption!
For specific problems in signal analysis, a discretized version of this transform, $\alpha$-modulation frames have already been  applied quite successfully
\cite{DahTes08}.
We are convinced that in the long run $\alpha$-modulation frames will  also provide excellent tools for numerical 
purposes 
such as, e.g.,  scattering problems or the numerical solution of partial differential
equations and  integral equations  on domains and manifolds. 
Indeed, very often the solutions to these equations  
contain periodic components as well as singularities on lower dimensional manifolds, so that neither Gabor frames
nor wavelet
bases can give rise to sparse representations.
 However, a sparse representation using $\alpha$-modulation frames might be possible. 
This would pave the way 
to very efficient
(adaptive) numerical algorithms in the spirit of  \cite{CohDahDev2001,CDD2,Ste}.  

Frames, i.e., generalizations of orthonormal bases \cite{Casaz1} that allow for redundant representation of
functions, can also be used for the discretization
of operators \cite{xxlframoper1}, for example  in a %Galerkin-type 
boundary element approach \cite{Gauletal03}. 
Frame methods have been used successfully in this context \cite{harbr08}, in particular in an
adaptive approach \cite{DahForRaa+2007,Ste}. 
Choosing good frames can lead to a better compressibility of the involved matrices. Also here we think that 
$\alpha$-modulation frames are a promising option.

To make the long-term goal of using $\alpha$-modulation frames in a %Galerkin-
boundary element discretization reachable,  it is absolutely necessary
to construct them to be {\em compactly supported}. Otherwise it would be highly complicated or 
maybe even impossible to treat bounded domains and the efficiency of numerical solution might be hard to judge. 
This is the main intention of the paper at hand.
In particular, we will make use of  generalized coorbit theory and improve the results of 
 \cite{DahForRau+2008} where it is shown that band-limited windows can be used to construct 
 $\alpha$-modulation frames.

%The so-called $\alpha$-modulation spaces  were originally introduced and defined independently by Gr{\"o}bner 
%\cite{FeiGr_85,Gr_92} and P{\"a}iv{\"a}rinta/Somersalo \cite{paiso88} using Fourier domain 
%decomposition methods. It turns out that $\alpha$-modulation spaces  can be fully characterized as coorbit spaces for the 
%$\alpha$-modulation transform. For
%$\alpha = 0$, $\alpha$-modulation spaces coincide precisely with the so-called modulation spaces that are defined in terms
%of Gabor analysis; for $\alpha \rightarrow 1$, the spaces become homogeneous Besov spaces, the smoothness spaces 
%associated
%to the continuous wavelet transform.  

%Frames are .-.....
%for example \cite{gr01}. Therefore, redundant representations like \textit{frames} \cite{Casaz1,Christensen03} are
%frequently used. 
%Frames are generalizations of bases allowing for
%stable reconstruction, substantial increase of freedom in analyzing signals and possibly sparse representations. 
%Due to redundancy, frame representations turn out to be much more stable with respect to  possible loss of 
%information.

 Feichtinger and Gr{\"o}chenig introduced coorbit theory in the late 1980's 
in a series of papers \cite{FeiGro_88,FeiGro_89,FeiGro_89-2}.
Their construction works as follows: starting from a (square) integrable group representation 
one can introduce  the (generalized) voice transform
%, i.e. the inner product of the function $f$
%with the orbit of a fixed function $\psi$,
%$V_{\psi} f(g) := \langle f, \pi(g)\psi \rangle$. 
%Then, one
and define the \textit{coorbit space} to be the space of 
distributions whose voice transform is contained in some solid Banach space. 
A remarkable asset of coorbit theory is that a  suitable discretization of the underlying 
group  yields  Banach frames on all coorbit spaces all at once. In order to be able to also work with groups
that are not square integrable, generalized coorbit theory was introduced in 
\cite{DahForRau+2008,dastte04,dastte04-1}. In particular, the discretization machinery still works for this 
setting.

Although the analysis presented in this paper is sometimes quite technical,  we finally end
up  with 
very natural and simple conditions on the decay of the Fourier transform.   These conditions allows for a plethora 
of 
compactly supported admissible functions. In particular, cardinal B-splines fit into this context.  It would even
be possible to use   
B-spline wavelets that possess vanishing moments and  therefore  can give rise to efficient compression strategies,
%and this is 
a very important step  towards efficient numerical schemes \cite{CohDahDev2001,CDD2,Ste}.

The paper is organized as follows. In Section \ref{sec:prel0} we review basic facts about generalized representation theory
modulo subgroups. In Section \ref{sec:admis0}, we state the main result on the admissibility  for the $\alpha$-modulation
transform of functions with certain decay of their Fourier transform. 
%In particular, we prove the existence of admissible atoms that are compactly supported in time. 
After briefly recalling the basics of generalized coorbit theory in Section 
\ref{sec:coorbit_basics} we show in Section \ref{sec:coorbit_alpha} that coorbit theory is applicable to the 
$\alpha$-modulation transform using again windows with particular regularity in Fourier domain.

%%%%%%%%%%%%%%%%%%%%%%%%%%%%%%%%%%%%%%%%%%%

\section{Preliminaries} \label{sec:prel0}

%%%%%%%%%%%%%%%%%%%%%%%%%%%%%%%%%%%%%%%%%%%

\subsection{Representation theory modulo subgroups}

We first give a short outline of the theory of square-integrable group representations introduced in  \cite{grmopa86}, see also
\cite{fo95} or \cite{wo02}.
Let $G$ be a locally compact Hausdorff topological group. It is well known that for such groups there always exists a nonzero Radon measure 
$\mu$, unique up to a constant factor, that is invariant under left translation. This measure is the so-called \emph{(left) Haar measure} 
of $G$. If the left Haar measure is simultaneously a right Haar measure as well (i.e. it is invariant under right translations), we call 
the group \emph{unimodular}. Let $\Hilb$ be a separable complex Hilbert space with inner product $\langle\cdot, \cdot \rangle$ and norm 
$\| \cdot \|$. Denote by $\mathcal{U}(\Hilb)$ the group of unitary operators on $\Hilb$. A \emph{unitary representation} of $G$ on $\Hilb$ 
is a strongly continuous group homomorphism $\pi:G \to \mathcal{U}(\Hilb)$, i.e. a mapping $\pi:G \to \mathcal{U}(\Hilb)$ such that
\begin{enumerate}[label={(\roman*)}]
\item $\pi(gh) = \pi(g)\pi(h)$ for all $g,h \in G$, and
\item for every $\psi \in \Hilb$, the mapping $G \to \Hilb$, $g \mapsto \pi(g)\psi$ is continuous.
\end{enumerate}
The group representation $\pi$ is said to be \emph{irreducible} if the only invariant subspaces under $\pi$ are $\{0\}$ and $\Hilb$, i.e. the only closed 
subspaces $M \subseteq \Hilb$ such that $\pi(g)(M) \subseteq M$ for every $g\in G$ are $M=\{0\}$ and $M = \Hilb$.
The group representation is said to be \emph{square-integrable} if it is irreducible and there exists a vector $\psi \not= 0$ in $\Hilb$
such that 
\[
c_\psi := \int_G \left| \langle \psi, \pi(g)\psi \rangle \right|^2\,d\mu(g) < \infty.
\]
Such a vector $\psi$ with $\| \psi \| = 1$ is called an \emph{admissible wavelet}. Its associated \emph{wavelet constant} is $c_\psi$. 
For an admissible wavelet $\psi$ and $f \in \Hilb$, the \emph{voice transform} or \emph{generalized wavelet transform} $V_{\psi}f:G \to 
\C$ is defined as
\[
V_{\psi} f(g) := \langle f, \pi(g)\psi \rangle,
\]
for $g \in G$. By square-integrability, we have $V_{\psi}f \in L^2(G, \mu)$ and $V_{\psi}: \Hilb \to L^2(G,\mu)$ bounded. The adjoint of
this mapping is
\[
V^\ast_\psi: L^2(G,\mu) \to \H, \ \ \  V^\ast_\psi F = \int_G F(g) \pi(g)\psi\,d\mu(g),
\]
to be understood in weak sense as
\[
\langle V^\ast_\psi F, h \rangle = \int_G F(g) \langle \pi(g)\psi, h \rangle \,d\mu(g)
\]
for all $h \in \Hilb$.
If $\pi:G \to \mathcal{U}(\Hilb)$ is  square-integrable, and $\psi\in\Hilb$ is admissible, then we have the following 
\emph{resolution of the identity} which holds weakly: for all $f \in \Hilb$,
\[
\frac{1}{c_\psi} V_\psi^\ast ( V_{\psi}f ) = \frac{1}{c_\psi} \int_G \langle f, \pi(g)\psi \rangle \pi(g)\psi\,d\mu(g) = f,
\]
that means the \emph{reproducing formula}
\[
\frac{1}{c_\psi} \int_G \langle f, \pi(g)\psi \rangle \langle \pi(g)\psi, h \rangle\,d\mu(g) = \langle f, h \rangle
\]
holds for all $f,h \in \Hilb$. One may interpret the family $\{ \pi(g)\psi\,:\,g \in G\}$ as a \emph{continuous frame}, with $V_\psi$ the
\emph{analysis operator}, $V^\ast_\psi$ the \emph{synthesis operator} and $A_{\psi} := V^{\ast}_\psi( V_\psi)$ the \emph{frame operator}.
For further reading on continuous frames, see \cite{alanga93,ranade06}.

From here, one proceeds to build classical coorbit theory, as explained in \cite{FeiGro_89, FeiGro_89-2}.

In many cases, however, representations of a group are not square-integrable. The usual informal  interpretation of this fact is that the 
group is, in a certain sense, too large. Following
\cite{DahForRau+2008}, the subsequent technique may be used to make the group smaller: choose a suitable closed subgroup $H$ and factor
out, forming the quotient $G/H$. In general, $H$ need not be a normal subgroup, so that $G/H$ will, in general, not carry a group 
structure; 
it is a \emph{homogeneous space}, though, i.e. the group $G$ acts on $G/H$ continuously and transitively by left translation. 
The quotient can always be equipped, in a natural way, with a measure $\mu$ that is \emph{quasi-invariant} under left translations, 
i.e. $\mu$ and all its left-translates have the same null sets. In many examples the measure $\mu$ will be translation-invariant in the 
first place. In order to transfer the representation from the group to the quotient, one then introduces a measurable \emph{section} 
$\sigma: G/H \to G$ which assigns a group element to each coset. We can then generalize admissibility and square-integrability for 
representations modulo subgroups in the following definition. 

\begin{definition}\label{D:admissible_abstract}
Let $G$ be a locally compact group, $\pi:G \to \mathcal{U}(\mathcal{H})$ a unitary representation, $H$ a closed subgroup of $G$, 
and $X = G/H$, equipped with a (quasi-)invariant measure $\mu$. Let $\sigma:X \to G$ be a section and $\psi \in \mathcal{H} \setminus 
\{0\}$. Define the operator $A_{\sigma}$ on $\mathcal{H}$ (weakly) by
\begin{equation}
A_{\sigma}f := \int_X \langle f, \pi(\sigma(x))\psi \rangle \pi(\sigma(x))\psi\,d\mu(x), \quad f \in \mathcal{H}.
\end{equation}
If $A_{\sigma}$ is bounded and boundedly invertible, then $\psi$ is called \textbf{admissible}, and the unitary representation 
$\pi$ is called \textbf{square-integrable modulo $(H, \sigma)$}.
\end{definition}

There is also a generalization of the voice transform in this setting.

\begin{definition}\label{D:voice_abstract}
Let $\psi$ be admissible. Then the \textbf{voice transform} of $f \in \mathcal{H}$ is defined by
\begin{equation}
V_{\psi}\,f(x) := \langle f, \pi(\sigma(x))\psi \rangle, \quad x \in X.
\end{equation}
We further define a {second transform}
\begin{equation}
W_{\psi}\,f(x) := V_{\psi}(A_{\sigma}^{-1}\,f)(x) = \langle f, A_{\sigma}^{-1}\,\pi(\sigma(x))\psi \rangle, \quad x \in X.
\end{equation}
\end{definition}

We have then the following version of the reproducing formula (see formula (2.4) in \cite{DahForRau+2008}).

\begin{theorem}
Let $\psi$ be admissible for the representation $\pi$ modulo $(H,\sigma)$. Then, for all $f_1, f_2 \in \Hilb$,
\[
\langle f_1, f_2 \rangle = \langle W_\psi f_1, V_\psi f_2 \rangle_{L^2(G,\mu)} = \langle V_\psi f_1, W_\psi f_2 \rangle_{L^2(G,\mu)}.
\]
\end{theorem}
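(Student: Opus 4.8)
The plan is to reduce everything to the weak definition of the frame-type operator $A_\sigma$ together with its self-adjointness. First I would record two structural facts. Testing the defining relation $\langle A_\sigma f, h\rangle = \int_X \langle f, \pi(\sigma(x))\psi\rangle \langle \pi(\sigma(x))\psi, h\rangle\,d\mu(x)$ against the reversed pair shows $\langle A_\sigma f, h\rangle = \langle f, A_\sigma h\rangle$ and $\langle A_\sigma f, f\rangle = \int_X |\langle f, \pi(\sigma(x))\psi\rangle|^2\,d\mu(x) \geq 0$, so $A_\sigma$ is self-adjoint and positive; since admissibility guarantees that $A_\sigma$ is boundedly invertible, $A_\sigma^{-1}$ is bounded and self-adjoint as well. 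I would also note that the two $L^2$-pairings in the statement are finite, because $\| V_\psi f \|_{L^2}^2 = \langle A_\sigma f, f\rangle \leq \|A_\sigma\|\,\|f\|^2$ makes $V_\psi$ (and hence $W_\psi = V_\psi A_\sigma^{-1}$) a bounded operator into $L^2(G,\mu)$.

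For the first identity, I would expand the pairing using the definitions $W_\psi f_1(x) = \langle A_\sigma^{-1} f_1, \pi(\sigma(x))\psi\rangle$ and $\overline{V_\psi f_2(x)} = \langle \pi(\sigma(x))\psi, f_2\rangle$, obtaining
\[
\langle W_\psi f_1, V_\psi f_2\rangle_{L^2} = \int_X \langle A_\sigma^{-1} f_1, \pi(\sigma(x))\psi\rangle\,\langle \pi(\sigma(x))\psi, f_2\rangle\,d\mu(x).
\]
The right-hand side is precisely the weak evaluation $\langle A_\sigma(A_\sigma^{-1} f_1), f_2\rangle$, which equals $\langle f_1, f_2\rangle$. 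This is the crux of the first equality: one recognizes the scalar integral as the matrix coefficient of $A_\sigma$ applied to the vector $A_\sigma^{-1} f_1$.

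For the second identity, I would instead apply the weak definition to $f_1$ itself and pair with $A_\sigma^{-1} f_2$, giving $\int_X \langle f_1, \pi(\sigma(x))\psi\rangle \langle \pi(\sigma(x))\psi, A_\sigma^{-1} f_2\rangle\,d\mu(x) = \langle A_\sigma f_1, A_\sigma^{-1} f_2\rangle$. Rewriting $\overline{W_\psi f_2(x)} = \langle \pi(\sigma(x))\psi, A_\sigma^{-1} f_2\rangle$ identifies this integral with $\langle V_\psi f_1, W_\psi f_2\rangle_{L^2}$, while self-adjointness of $A_\sigma$ collapses $\langle A_\sigma f_1, A_\sigma^{-1} f_2\rangle = \langle f_1, A_\sigma A_\sigma^{-1} f_2\rangle = \langle f_1, f_2\rangle$.

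The computation itself is formal; the only point requiring care is the legitimacy of reading the scalar integral off the weakly defined operator $A_\sigma$ and of moving $A_\sigma^{-1}$ across the inner product. I expect the main (mild) obstacle to be exactly this bookkeeping: ensuring that all integrals converge absolutely (guaranteed by Cauchy--Schwarz together with the boundedness of $V_\psi$ and $W_\psi$ into $L^2(G,\mu)$), so that the weak identities defining $A_\sigma$ may be invoked with the specific test vectors $f_2$ and $A_\sigma^{-1} f_2$, and that the self-adjointness of $A_\sigma^{-1}$ is used correctly.
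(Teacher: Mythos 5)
Your argument is correct and is exactly the standard one: the paper itself gives no proof of this theorem, simply citing formula (2.4) of the reference \cite{DahForRau+2008}, and what you write is the expected verification. The key points — self-adjointness and positivity of $A_\sigma$ from its weak definition, $\|V_\psi f\|_{L^2}^2 = \langle A_\sigma f, f\rangle$ to justify absolute convergence, and then reading the two scalar integrals as $\langle A_\sigma(A_\sigma^{-1}f_1), f_2\rangle$ and $\langle A_\sigma f_1, A_\sigma^{-1}f_2\rangle$ respectively — are all handled properly, so there is nothing to correct.
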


%%%%%%%%%%%%%%%%%%%%%%%%%%%%%%%%%%%%%%%%%%%%%

\subsection{The Setting for the \texorpdfstring{$\alpha$}{[alpha]}-transform}

%\begin{comment}
%It turns out that $\alpha$-modulation spaces can be constructed as coorbit spaces in the setting of generalized representation theory 
%modulo subgroups for a particular group, the \emph{affine Weyl-Heisenberg group}.
%
%We make precise the abstract theory of the preceding subsection for that case. Again, we closely follow \cite{DahForRau+2008}.
%\end{comment}

It turns out that the $\alpha$-modulation transform can be constructed  in the setting of generalized representation theory 
modulo subgroups for a particular group, the \emph{affine Weyl-Heisenberg group}.

Denote by $\R_+$ the set of positive real numbers. Throughout this paper we use the following convention for the Fourier transform
\[
 \hat f(\xi):=\mathcal{F}(f)(\xi):=\int_\R f(x) e^{-2\pi i\xi x}dx
\]

The \emph{affine Weyl-Heisenberg group} is the set
\[ G_{aWH} := \R^2 \times \R_+ \times \R
\]
together with composition law
\begin{align*}
(x,\omega, a, \tau) & \circ (x^{\prime}, \omega^{\prime}, a^{\prime}, \tau^{\prime}) = ( x + ax^{\prime}, \omega + \frac{1}{a}
\omega^{\prime}, aa^{\prime}, \tau + \tau^{\prime} + \omega a x^{\prime} ).
\end{align*}
Equipped with the usual product topology of the respective Euclidean topologies on $\R$ and $\R_+$, this becomes a (non-abelian)
locally compact Hausdorff topological group. The neutral element is $(0,0,1,0)$, and the inverse to $(x,\omega, a, \tau)$ is 
$(x, \omega, a, \tau)^{-1} = (-\frac{x}{a}, -\omega a, \frac{1}{a}, - \tau + x\omega)$. The Haar measure is given by
\[ d\mu(x,\omega, a,\tau) = dxd\omega\frac{da}{a}d\tau.
\]
This is in fact both a left and right Haar measure on $G_{aWH}$, thus the group is \emph{unimodular}.
%One may also consider a reduced version of $G_{aWH}$. The \emph{center} of $G_{aWH}$, the set $Z = \{ (0,0,1,\tau) | \tau \in \R \} \subseteq G_{aWH}$, is a normal subgroup. Form the quotient group $G_{aWH} / Z$. This group is isomorphic to the set $\R^2 \times \R_+$, together with the composition
%\begin{align*}
%(x,\omega, a) & \circ (x^{\prime}, \omega^{\prime}, a^{\prime}) = ( x + ax^{\prime}, \omega + \frac{1}{a}\omega^{\prime}, aa^{\prime}).
%\end{align*}
%This group is called the \emph{reduced affine Weyl-Heisenberg group} $G_{raWH}$.

We define the "basic three" operators of time-frequency analysis, \emph{translation}: $T_xf(t) = f(t-x)$, \emph{modulation}: 
$M_{\omega}f(t) = e^{2\pi i \omega t}f(t)$, and \emph{dilation}: $D_af(t) = \frac{1}{\sqrt{a}}f(\frac{t}{a})$ (with $x, \omega \in \R$ 
and $a \in \R_+$). These are unitary operators on $L^2(\R)$. Using them, we define the \emph{Stone-von Neumann representation}, given by
\[
\pi:G_{aWH} \to \mathcal{U}(L^2(\R)), \quad\quad \pi(x, \omega, a, \tau) = e^{2\pi i \tau} T_x M_{\omega} D_a.
\]
This constitutes a unitary representation of $G_{aWH}$, but unfortunately not a square-integrable one, see \cite{torr1}.

The subset
\[
H = \{ (0,0,a,\tau) \} \subseteq G_{aWH}.
\]
is a closed subgroup of the affine Weyl-Heisenberg group, although not a normal subgroup. Define the quotient
\[
X := G_{aWH} / H \simeq \R^2.
\]
This is not a group but a \emph{homogeneous space}. It carries the measure $dxd\omega$ which is in fact a truly \emph{invariant} 
measure under left translations on $X$.

For $0 \leq \alpha < 1$, choose the \emph{section (or lifting)}
\[
\sigma:X \to G_{aWH}, \quad \quad \sigma(x,\omega) = (x,\omega, \beta(\omega), 0)
\]
with
\[
\beta(\omega) = (1 + |\omega |)^{-\alpha}.
\]

One can then show \cite{DahForRau+2008,spexxl14} that, for $\psi \in L^2(\R)$, the operator from Definition \ref{D:admissible_abstract} is in this case a Fourier
multiplier, in general unbounded, but densely defined: 
\begin{equation}\label{frame-is-fourier-mult}
\widehat{A_{\sigma} f} = m \cdot \hat{f}
\end{equation}
for $f \in \mbox{dom}(A_{\sigma}) \subseteq L^2(\R)$ a dense subspace, with \emph{symbol}
\begin{equation}\label{definition-of-symbol}
m_\psi(\xi) = \int_{\R} | \hat{\psi}(\beta(\omega)(\xi - \omega)) |^2 \beta(\omega)\,d\omega.
\end{equation}
Thus the admissibility of $\psi \in L^2(\R)$ is equivalent to boundedness and invertibility of the Fourier multiplier $A_{\sigma}$, 
which is in turn equivalent to the existence of constants $A, B$ such that
\begin{equation} \label{eq_star}
0 < A \leq m_\psi(\xi) \leq B < \infty 
\end{equation}
for almost all $\xi \in \R$. (Compare to the more general setting of continuous nonstationary Gabor frames, see \cite{spexxl14}.)

\section{Admissibility} \label{sec:admis0}

%%%%%%%%%%%%%%%%%%%%%%%%%%%%%%%%%%%%%%%%%%%%%%%%

We prove a new admissibility condition for the $\alpha$-modulation transform. This generalizes results previously obtained by
Dahlke et al. \cite{DahForRau+2008}. 
More precisely, whereas in \cite{DahForRau+2008} it was shown that band-limited functions, that is functions with compactly supported 
Fourier transform, are admissible, we prove that it suffices to demand just a certain decay of the Fourier transform. 
In particular, we find admissible functions that are compactly supported in time.

Two simple properties of $\beta$, that we will often use without further comment in the sequel, are
\begin{enumerate}[label={(\roman*)}]
\item $\beta$ is symmetric: $\beta(\omega) = \beta(-\omega)$ for all $\omega \in \R$;
\item $\beta$ is bounded: $0 < \frac{1}{1 + |\omega |} \leq \beta(\omega) \leq 1$ for all $\omega \in \R$.
\end{enumerate}

The main result in this section is the following.

\begin{theorem}\label{T:2.1}
Let $0\leq\alpha<1$ and $\psi \in L^2(\R) \setminus \{ 0 \}$ be such that $\hat{\psi}$ is continuous and 
\[
|\hat{\psi}(\xi)| \leq C (1+|\xi|)^{-r}
\]
for all $\xi \in \R$, with 
\[
r > \max\left\{ 1, \frac{\alpha}{2(1-\alpha)} \right\},
\]
then $\psi$ is admissible.
\end{theorem}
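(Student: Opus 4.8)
The plan is to verify the admissibility criterion \eqref{eq_star} directly, i.e.\ to produce constants $0<A\le B<\infty$ with $A\le m_\psi(\xi)\le B$ for every $\xi\in\R$, where $m_\psi$ is the symbol \eqref{definition-of-symbol}; since $\hat\psi$ is continuous, the estimate may be established pointwise rather than merely almost everywhere. Because the decay hypothesis and $\beta$ are both symmetric, I would first reduce to the case $\xi\ge 0$. The whole analysis then hinges on the \emph{warping map} $\omega\mapsto \beta(\omega)(\xi-\omega)=(1+|\omega|)^{-\alpha}(\xi-\omega)$ together with the weight $\beta(\omega)\,d\omega$, and on how its behaviour interacts with the decay of $\hat\psi$.

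For the \emph{upper bound} I would insert $|\hat\psi(\eta)|^2\le C^2(1+|\eta|)^{-2r}$ into \eqref{definition-of-symbol} to obtain
\[
m_\psi(\xi)\le C^2\int_\R \bigl(1+\beta(\omega)|\xi-\omega|\bigr)^{-2r}\,\beta(\omega)\,d\omega .
\]
On the range $\omega\ge 0$ (with $\xi\ge 0$) the map $\omega\mapsto\beta(\omega)(\xi-\omega)$ is strictly decreasing, so the substitution $v=\beta(\omega)(\xi-\omega)$ is licit there; a short computation shows its Jacobian $\beta(\omega)\big/\bigl|\tfrac{d}{d\omega}(\beta(\omega)(\xi-\omega))\bigr|=\tfrac{1+\omega}{1+\alpha\xi+\omega(1-\alpha)}$ stays bounded (by $1/(1-\alpha)$), turning this contribution into a multiple of $\int_\R(1+|v|)^{-2r}\,dv$, finite and independent of $\xi$. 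For $\xi$ large the map acquires, on $\omega<0$, an interior critical point (a minimum of value $\asymp\xi^{1-\alpha}$), so the substitution degenerates; here I would instead estimate directly, splitting according to whether $|\omega|$ is small, comparable to $\xi$, or large, and using $\beta(\omega)\asymp(1+|\omega|)^{-\alpha}$. In each zone the integrand is dominated by an explicit power of $(1+|\omega|)$, and requiring these dominating integrals to converge \emph{uniformly in} $\xi$ is exactly what forces lower bounds on $r$; the prescribed thresholds $r>1$ and $r>\alpha/(2(1-\alpha))$ serve as conservative sufficient conditions guaranteeing this.

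For the \emph{lower bound} I would use continuity of $\hat\psi$ and $\psi\ne 0$ to fix $\eta_0\in\R$ and $\delta>0$ with $|\hat\psi(\eta)|\ge c>0$ whenever $|\eta-\eta_0|\le\delta$. Then
\[
m_\psi(\xi)\ge c^2\int_{S(\xi)}\beta(\omega)\,d\omega,\qquad S(\xi):=\bigl\{\omega\in\R:\ |\beta(\omega)(\xi-\omega)-\eta_0|\le\delta\bigr\}.
\]
As $\omega$ increases over $\R$ the warping map runs continuously from $+\infty$ to $-\infty$, so $S(\xi)$ is always nonempty; the task is to bound $\int_{S(\xi)}\beta\,d\omega$ below uniformly in $\xi$. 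I would do this by the same change of variables on the monotone branch, where $\int_{S(\xi)}\beta\,d\omega=\int_{\eta_0-\delta}^{\eta_0+\delta}\bigl(\beta/|\cdots|\bigr)\,dv$ reduces matters to checking that the Jacobian is bounded \emph{below} on the relevant branch. The point is that for large $\xi$ the solution $\omega^\ast$ of $\beta(\omega^\ast)(\xi-\omega^\ast)=\eta_0$ is itself large (indeed $\omega^\ast\approx\xi$), and there the Jacobian tends to the positive constant $1$; for $\xi$ in a compact set one argues by continuity and positivity.

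The step I expect to be the main obstacle is the uniform-in-$\xi$ control of the upper-bound integral across the critical/resonance region of the warping map, where the clean substitution breaks down and the $(1+|\omega|)^{-\alpha}$ weight competes against the stretched argument $\beta(\omega)(\xi-\omega)$. Making the heuristic zone splitting rigorous with constants that do not depend on $\xi$ — and, in particular, tracking the exponents so that the tail integrals converge — is the technical heart of the proof and is what pins down the admissible decay rates. By contrast, once the interval on which $\hat\psi$ is bounded away from zero is fixed, the lower bound reduces to the elementary Jacobian estimate above.
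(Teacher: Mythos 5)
Your proposal is correct in outline and shares the paper's skeleton: reduce to $\xi\ge 0$ by symmetry, analyse the warping map $r_\xi(\omega)=\beta(\omega)(\xi-\omega)$ and its interior critical point $\omega^\ast_\xi\approx-\tfrac{\alpha}{1-\alpha}\xi$ with $r_\xi(\omega^\ast_\xi)\asymp\xi^{1-\alpha}$, substitute on the monotone branches with the Jacobian $1/|h_\xi|$, prove decay of the off-band contributions as $\xi\to\infty$, and dispose of compact $\xi$-ranges by continuity and strict positivity of $m_\psi$ (the paper's Lemmas \ref{L:2.3} and \ref{L:2.4}, which you gesture at but should still prove via dominated convergence). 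Where you genuinely diverge is instructive. First, your lower bound is more economical: the paper establishes the full limit $m_\psi(\xi)\to\|\psi\|^2$ by an $\varepsilon$-argument on $I_4=[0,\infty)$ using the uniform convergence $h_\xi(r_\xi^{-1}(z))\to 1$ on compacta, whereas you only need one interval on which $|\hat\psi|\ge c$ together with a lower bound on the Jacobian along the positive branch; this suffices for admissibility, at the cost of losing the identification of the limit. Second, on $\omega<0$ the paper excises a window of width $\asymp\xi^{\alpha}$ about $\omega^\ast_\xi$ (the interval $I_2$) and runs the substitution on $I_1$ and $I_3$, while you propose direct pointwise domination in zones. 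That can be made to work, because $r_\xi(\omega)\ge r_\xi(\omega^\ast_\xi)\asymp\xi^{1-\alpha}$ on \emph{all} of $(-\infty,0]$ and $r_\xi(\omega)\gtrsim|\omega|^{1-\alpha}$ for $\omega\to-\infty$ uniformly in $\xi$ — but only if you keep the factor $\beta(\omega)\asymp(1+|\omega|)^{-\alpha}$ in the zone where $|\omega|\asymp\xi$. If you discard it (bounding $\beta\le 1$ over a set of measure $\asymp\xi$), the middle zone contributes $\xi^{1-2r(1-\alpha)}$, which does \emph{not} tend to zero for $\alpha>1/2$ under the hypothesis $r>\max\{1,\alpha/(2(1-\alpha))\}$; this is exactly why the paper shrinks the excised window to width $\xi^{\alpha}$, where the lossy bound $\beta\le1$ still yields $\xi^{\alpha-2r(1-\alpha)}\to0$. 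So your ``small / comparable to $\xi$ / large'' zoning closes the argument provided the exponent bookkeeping retains the weight $\beta$, and in that form it is arguably simpler than the paper's four-interval decomposition.
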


\begin{remark} Note that, for $\alpha \to 1$, the exponent $r$ becomes larger and larger: $r \to \infty$.
That means that the closer $\alpha$ is to $1$, the stronger decay of the Fourier transform we need to achieve admissibility. 
\end{remark}

\begin{proof}
We have to show that there exist positive constants $A, B > 0$ such that
\[
0 < A \leq m_\psi(\xi) \leq B < \infty 
\]
for almost all $\xi \in \R$, where
\[
m_\psi(\xi) = \int_{\R} |\hat{\psi}(\beta(\omega)(\xi - \omega))|^2 \beta(\omega)\,d\omega.
\]
For simplicity of notation, set
\[
r_{\xi}(\omega) = \beta(\omega)(\xi - \omega) = \frac{\xi - \omega}{(1 + |\omega |)^{\alpha}},
\]
that means
\[
m_\psi(\xi) = \int_{\R} | \hat{\psi}(r_{\xi}(\omega)) |^2 \beta(\omega)\,d\omega.
\]
First consider $\alpha = 0$. In this case, $\beta(\omega) \equiv 1$, thus
\[
m_\psi(\xi) = \int_{\R} |\hat{\psi}(\xi - \omega)|^2\,d\omega = \| \hat{\psi} \|^2 = \| \psi \|^2
\]
independent of $\xi$, so \eqref{eq_star} is satisfied with $A = B = \| \psi \|^2$. We do not even need the assumptions of 
Theorem \ref{T:2.1}  here; in fact \emph{every} $\psi \in L^2(\R)\backslash\{0\}$ is admissible in the case $\alpha = 0$,
which is a well-known consequence of the orthogonality relation for the short-time Fourier transform.

Now assume $0 < \alpha < 1$. We will first prove three lemmata that yield some simplifications.

\begin{lemma}\label{L:2.2}
We have
\[
m_{\psi}(-\xi) = m_{\overline{\psi}}(\xi),
\]
(where $\overline{\psi}(x) = \overline{\psi(x)}$ denotes the complex conjugate function to $\psi$).
\end{lemma}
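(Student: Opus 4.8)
The plan is to treat this as a change-of-variables identity, exploiting the symmetry $\beta(-\omega)=\beta(\omega)$ together with the behaviour of the Fourier transform under complex conjugation. The one auxiliary fact I need is that, under the convention $\hat f(\xi)=\int_\R f(x)e^{-2\pi i\xi x}\,dx$ fixed in the paper, conjugation reflects the Fourier transform:
\[
\widehat{\overline\psi}(\xi)=\overline{\hat\psi(-\xi)}, \qquad \text{hence}\qquad |\widehat{\overline\psi}(\xi)|=|\hat\psi(-\xi)|.
\]
This follows immediately by conjugating the defining integral $\int_\R \overline{\psi(x)}\,e^{-2\pi i\xi x}\,dx$ and relabelling the sign of the frequency.

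Starting from the symbol formula \eqref{definition-of-symbol}, I would write
\[
m_{\overline\psi}(\xi)=\int_\R |\widehat{\overline\psi}(\beta(\omega)(\xi-\omega))|^2\,\beta(\omega)\,d\omega=\int_\R |\hat\psi(\beta(\omega)(\omega-\xi))|^2\,\beta(\omega)\,d\omega,
\]
where the second equality uses the conjugation identity with $s=\beta(\omega)(\xi-\omega)$, so that $-s=\beta(\omega)(\omega-\xi)$. Next I substitute $\omega\mapsto-\omega$; the Lebesgue measure on $\R$ is invariant under this reflection, and by symmetry $\beta(-\omega)=\beta(\omega)$. The argument of $\hat\psi$ then transforms as $\beta(-\omega)(-\omega-\xi)=\beta(\omega)(-\xi-\omega)$, whence
\[
m_{\overline\psi}(\xi)=\int_\R |\hat\psi(\beta(\omega)(-\xi-\omega))|^2\,\beta(\omega)\,d\omega=m_\psi(-\xi),
\]
the last equality being just the definition of $m_\psi$ evaluated at $-\xi$. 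This closes the argument.

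There is no genuine obstacle here; the statement is essentially a bookkeeping identity. The only point to watch is the careful tracking of the two sign flips — one arising from the conjugation of the Fourier transform, the other from the reflection $\omega\mapsto-\omega$ — and verifying that they combine with the evenness of $\beta$ to reproduce exactly the argument $\beta(\omega)(-\xi-\omega)$ occurring in $m_\psi(-\xi)$. If the conjugation identity were misremembered, one would land on $m_{\overline\psi}(\xi)$ or on $m_\psi(\xi)$ with an incorrect sign, so that is the step I would double-check.
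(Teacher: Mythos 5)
Your proof is correct and uses exactly the same ingredients as the paper's: the conjugation identity $\widehat{\overline\psi}(\xi)=\overline{\hat\psi(-\xi)}$, the reflection substitution $\omega\mapsto-\omega$, and the evenness of $\beta$; the only difference is that you start from $m_{\overline\psi}(\xi)$ and work toward $m_\psi(-\xi)$, whereas the paper runs the same computation in the opposite direction.
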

\begin{proof}
We have
\[
m_{\psi}(-\xi) = \int_{\R} | \hat{\psi}(r_{-\xi}(\omega)) |^2 \beta(\omega)\,d\omega.
\]
Now,
\[
r_{-\xi}(\omega) = \frac{-\xi - \omega}{(1 + |\omega |)^{\alpha}} = - \frac{\xi - (-\omega)}{(1 + | -\omega |)^{\alpha}} = - r_{\xi}(-\omega),
\]
so
\[
m_{\psi}(-\xi) = \int_{\R} | \hat{\psi}(-r_{\xi}(-\omega)) |^2 \beta(-\omega)\,d\omega = \int_{\R} | \hat{\psi}(-r_{\xi}
(\omega)) |^2 \beta(\omega)\,d\omega.
\]
With $\overline{\hat{\psi}(-\eta)} = \hat{\overline{\psi}}(\eta)$, we conclude
\[
m_{\psi}(-\xi) = \int_{\R} | \hat{\overline{\psi}}(r_{\xi}(\omega)) |^2 \beta(\omega)\,d\omega = m_{\overline{\psi}}(\xi).
\]
\end{proof}
It is clear that if $\psi \in L^2(\R)$ satisfies the assumptions of Theorem \ref{T:2.1}, then $\overline{\psi}$ does so, as well. 
Thus, it suffices to show \eqref{eq_star} only for $\xi \geq 0$.

\begin{lemma}\label{L:2.3}
Let $\hat\psi$ be continuous. The function $m_\psi$ is strictly positive, i.e. $m_\psi(\xi) > 0$ for all $\xi \in \R$.
\end{lemma}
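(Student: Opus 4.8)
The plan is to exploit that the integrand defining $m_\psi$ is nonnegative, so that $m_\psi(\xi) \geq 0$ holds trivially, and then to upgrade this to strict positivity by exhibiting a set of $\omega$ of positive Lebesgue measure on which the integrand is strictly positive. First I would observe that since $\psi \neq 0$, its Fourier transform $\hat\psi$ is not identically zero; combined with the hypothesis that $\hat\psi$ is continuous, this gives an open interval $I \subset \R$ on which $|\hat\psi| > 0$.

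The second step is to analyze, for fixed $\xi$, the map $\omega \mapsto r_\xi(\omega) = (\xi-\omega)(1+|\omega|)^{-\alpha}$. This map is continuous on all of $\R$ (the only point requiring a glance is $\omega = 0$, where $|\omega|$ is continuous, so the denominator is continuous and nonvanishing). Using $0 < \alpha < 1$ I would compute the behaviour at infinity: as $\omega \to +\infty$ one has $r_\xi(\omega) \sim -\omega^{1-\alpha} \to -\infty$, and as $\omega \to -\infty$ one has $r_\xi(\omega) \sim |\omega|^{1-\alpha} \to +\infty$. Since the continuous image of the connected set $\R$ is an interval, and this interval is unbounded in both directions, the range of $r_\xi$ is all of $\R$; in particular it contains $I$.

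From here the conclusion is quick. Because $r_\xi$ is continuous and its range meets the open interval $I$, the preimage $r_\xi^{-1}(I)$ is a nonempty open subset of $\R$ and hence has positive Lebesgue measure. On this set one has $|\hat\psi(r_\xi(\omega))|^2 > 0$ by the choice of $I$, while $\beta(\omega) > 0$ everywhere, so the integrand in the definition of $m_\psi(\xi)$ is strictly positive there. Integrating a nonnegative function that is strictly positive on a set of positive measure yields $m_\psi(\xi) > 0$, as desired.

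The argument is essentially routine; the one place where care is needed---and where the standing assumption $\alpha < 1$ enters decisively---is the limit computation for $r_\xi$ at $\pm\infty$, since it is precisely the positivity of $1-\alpha$ that forces $r_\xi$ to grow without bound and thus to be surjective onto $\R$. I expect this to be the only genuine (if mild) obstacle; the passage from surjectivity to a positive-measure preimage via continuity of $r_\xi$ is then immediate.
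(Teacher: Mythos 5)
Your proof is correct and is essentially the contrapositive of the paper's own argument: both hinge on the observation that $r_\xi$ is continuous with range all of $\R$ (the limits at $\pm\infty$ using $\alpha<1$), the paper arguing by contradiction that $m_\psi(\xi)=0$ forces the continuous nonnegative integrand, and hence $\hat\psi$, to vanish identically, while you argue directly by exhibiting an open (hence positive-measure) set $r_\xi^{-1}(I)$ on which the integrand is strictly positive. The substance is the same, so nothing further is needed.
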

\begin{proof}
It is clear that $m_\psi(\xi) \geq 0$ for all $\xi$.
Suppose there is a $\xi \in \R$ such that $m_\psi(\xi) = 0$. Since the map $\omega \mapsto |\hat{\psi}(r_{\xi}(\omega))|^2 
\beta(\omega)$ is continuous and greater or equal to zero, $\int_{\R} |\hat{\psi}(r_{\xi}(\omega))|^2 \beta(\omega)\, d\omega = 0$ implies 
$|\hat{\psi}(r_{\xi}(\omega))|^2 \beta(\omega) = 0$ for all $\omega \in \R$, which gives
$|\hat{\psi}(r_{\xi}(\omega))| = 0$ for all $\omega \in \R$. Now observe that, for fixed $\xi$,
\[
r_{\xi}(\omega) = \frac{\xi - \omega}{(1 + |\omega |)^{\alpha}} \longrightarrow \begin{cases} -\infty & \quad \mbox{ for }\omega \to + \infty, \\
+ \infty & \quad \mbox{ for } \omega \to - \infty;
\end{cases}
\]
thus the range of $r_{\xi}$ is all of $\R$ and so $\hat{\psi}(\eta) = 0$ for all $\eta \in \R$. But this is equivalent to
$\psi = 0$, a contradiction.
\end{proof}

\begin{lemma}\label{L:2.4}
Let $\psi$ satisfy the assumptions of  Theorem \ref{T:2.1}. The function $m_\psi(\xi)$ is continuous in $\xi$.
\end{lemma}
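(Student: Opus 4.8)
The plan is to prove continuity of $m_\psi$ at an arbitrary point via the dominated convergence theorem, by exhibiting an integrable majorant of the integrand that is uniform over $\xi$ in a neighborhood of that point. Fix $\xi_0 \in \R$, put $I = [\xi_0 - 1, \xi_0 + 1]$ and $R = |\xi_0| + 1$, so that $|\xi| \le R$ for every $\xi \in I$. For each fixed $\omega$, the map $\xi \mapsto r_\xi(\omega) = (\xi - \omega)/(1+|\omega|)^{\alpha}$ is continuous, and since $\hat\psi$ is continuous by hypothesis, the integrand $\xi \mapsto |\hat\psi(r_\xi(\omega))|^2 \beta(\omega)$ is continuous in $\xi$ for every $\omega$. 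It then remains to produce a function $g \in L^1(\R)$ with $|\hat\psi(r_\xi(\omega))|^2 \beta(\omega) \le g(\omega)$ for all $\xi \in I$ and all $\omega$; continuity of $m_\psi$ at $\xi_0$ follows from the standard continuity theorem for parameter-dependent integrals, and $\xi_0$ being arbitrary gives the claim.

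To build $g$ I would combine the decay hypothesis $|\hat\psi(\eta)| \le C(1+|\eta|)^{-r}$ with a lower bound on $|r_\xi(\omega)|$ for large $|\omega|$ that is uniform in $\xi \in I$. Specifically, for $|\omega| \ge 2R$ one has $|\xi - \omega| \ge |\omega| - R \ge |\omega|/2$, hence
\[
|r_\xi(\omega)| = \frac{|\xi - \omega|}{(1+|\omega|)^{\alpha}} \ge \frac{|\omega|/2}{(1+|\omega|)^{\alpha}}.
\]
Since $t \mapsto (1+t)^{-2r}$ is decreasing, this yields $|\hat\psi(r_\xi(\omega))|^2 \le C^2(1 + \tfrac{|\omega|/2}{(1+|\omega|)^{\alpha}})^{-2r}$ on $\{|\omega| \ge 2R\}$. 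On the complementary bounded set $\{|\omega| < 2R\}$ I would use only the crude bounds $|\hat\psi| \le C$ and $\beta \le 1$. Keeping the factor $\beta(\omega) = (1+|\omega|)^{-\alpha}$ throughout, this leads to the majorant
\[
g(\omega) = \begin{cases} C^2, & |\omega| < 2R, \\[4pt] C^2\left(1 + \dfrac{|\omega|/2}{(1+|\omega|)^{\alpha}}\right)^{-2r}(1+|\omega|)^{-\alpha}, & |\omega| \ge 2R, \end{cases}
\]
which dominates the integrand for every $\xi \in I$ by construction.

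The only point where the decay rate $r$ actually enters, and the step I expect to be the main (though still routine) obstacle, is verifying $g \in L^1(\R)$. The contribution from $\{|\omega| < 2R\}$ is bounded by $4R\,C^2 < \infty$, so integrability is governed by the tail. For large $|\omega|$ the quantity $|\omega|/(1+|\omega|)^{\alpha}$ behaves like $|\omega|^{1-\alpha}$, so $g(\omega)$ decays like $|\omega|^{-2r(1-\alpha)}(1+|\omega|)^{-\alpha} \sim |\omega|^{-(2r(1-\alpha)+\alpha)}$. The tail integral converges precisely when $2r(1-\alpha) + \alpha > 1$, i.e. when $2r(1-\alpha) > 1-\alpha$, that is $r > \tfrac12$. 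Since the hypotheses of Theorem \ref{T:2.1} require $r > 1 > \tfrac12$, this is comfortably satisfied, $g$ is integrable, and the dominated convergence argument goes through, establishing continuity of $m_\psi$.
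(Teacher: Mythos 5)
Your argument is correct and follows essentially the same route as the paper: both proofs establish continuity via dominated convergence by constructing an integrable majorant of $|\hat{\psi}(r_{\xi}(\omega))|^2\beta(\omega)$ that is uniform over $\xi$ in a neighborhood of the point in question, and both reduce integrability of the majorant to the tail estimate $|\omega|^{-(\alpha+2r(1-\alpha))}$ with exponent exceeding $1$. The only (immaterial) difference is cosmetic: the paper keeps the majorant in the form $\tilde{C}\beta(\omega)\bigl(\tfrac{1}{2}+\beta(\omega)|\xi-\omega|\bigr)^{-2r}$ using $|r_{\xi}(\omega)-r_{\xi'}(\omega)|\leq\tfrac{1}{2}$, whereas you split $\R$ into $\{|\omega|<2R\}$ and $\{|\omega|\geq 2R\}$ and bound $|r_{\xi}(\omega)|$ from below on the tail.
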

\begin{proof}
Let $\xi \in \R$ be fixed. Let $\xi^{\prime} \in \R$ with $|\xi^{\prime} - \xi | \leq \frac{1}{2}$. Then
\[
| m_\psi(\xi) - m_\psi(\xi^{\prime}) | \leq \int_{\R} \underbrace{\left| |\hat{\psi}(r_{\xi}(\omega))|^2 - | 
\hat{\psi}(r_{\xi^{\prime}}(\omega)) |^2 \right|\,\beta(\omega)}_{=: I(\omega)}\,d\omega.
\]
We want to use the Dominated Convergence Theorem on this last integral. To this end, consider the integrand $I(\omega)$. If 
$\psi$ satisfies $|\hat{\psi}(\xi)| \leq C (1+|\xi|)^{-r}$, we can estimate
\begin{align*}
I(\omega) & \leq \beta(\omega) \left( C^{2}(1 + | r_{\xi}(\omega) |)^{-2r} + C^{2}(1 + | r_{\xi^{\prime}}(\omega) |)^{-2r}
\right) \\
& = C^{2} \beta(\omega) \left( (1 + | r_{\xi}(\omega) |)^{-2r} + (1 + | r_{\xi^{\prime}}(\omega) |)^{-2r} \right).
\end{align*}
Observe that if $|\xi^{\prime} - \xi | \leq \frac{1}{2}$ then
\begin{align*}
| r_{\xi}(\omega) - r_{\xi^{\prime}}(\omega) | & = |\beta(\omega)(\xi - \omega) - \beta(\omega)(\xi^{\prime} - \omega) | \\
& = \beta(\omega) | \xi - \xi^{\prime} | \leq \frac{1}{2}
\end{align*}
for all $\omega \in \R$. Hence
\[
1 + | r_{\xi^{\prime}}(\omega) | \geq \frac{1}{2} + | r_{\xi}(\omega) |
\]
for all $\omega\in\R$; since, trivially, also
\[ 1 + | r_{\xi}(\omega) | \geq \frac{1}{2} + | r_{\xi}(\omega) |,
\]
we get
\begin{align*}
I(\omega) & \leq C^{2} \beta(\omega) \cdot 2\left(\frac{1}{2} + | r_{\xi}(\omega) |\right)^{-2r} \\
& = \tilde{C} \beta(\omega) \left( \frac{1}{2} + \beta(\omega) | \xi - \omega | \right)^{-2r},
\end{align*}
which is independent of $\xi^{\prime}$ for $|\xi-\xi^\prime|\leq 1/2$. But this last expression is integrable, since, for large $|\omega|$, 
it behaves asymptotically like
\[
\sim |\omega |^{-\alpha} (| \omega |^{-\alpha} |\omega |)^{-2r} = \frac{1}{| \omega |^{\alpha + 2r(1-\alpha)}},
\]
and with $r > 1$ the exponent satisfies
\[
\alpha + 2r(1-\alpha) > \alpha + 2(1 - \alpha) = 2 - \alpha > 1.
\]
It is further clear that for $\xi^{\prime} \to \xi$, we have $r_{\xi^{\prime}}(\omega) \to r_{\xi}(\omega)$ and, 
since $\hat{\psi}$ is continuous, $\hat{\psi}(r_{\xi^{\prime}}(\omega)) \to \hat{\psi}(r_{\xi}(\omega))$, pointwise for 
all $\omega \in \R$. Thus the integrand satisfies
$I(\omega) \to 0$ for $\xi^{\prime} \to \xi$, pointwise for all $\omega \in \R$, so Dominated Convergence finally yields
\[
m_\psi(\xi^{\prime}) - m_\psi(\xi) \to 0
\]
for $\xi^{\prime} \to \xi$, i.e. $m$ is continuous.
\end{proof}

The last two lemmata show that it suffices to prove \eqref{eq_star} only for sufficiently large $\xi$, say $\xi \geq K$; in
this case, on the compact interval $[0, K]$, $m_\psi$ satisfies \eqref{eq_star} as well, since it is continuous and strictly
positive there, so \eqref{eq_star} will hold for all $\xi \geq 0$. We will in fact be able to show an even stronger 
statement: we will prove that $\lim_{\xi \to \infty} m_\psi(\xi) = L > 0$ exists and is positive. From this, the above follows.

Without loss of generality, assume from now on that $\xi > \frac{2}{\alpha}$. Then, obviously, $\alpha \xi > 2$ and $\xi > 2$.
%Then, in particular, $\xi > \frac{1}{\alpha} + (\frac{1}{\alpha} - 1) > \frac{1}{\alpha} > 1$ and $\alpha \xi > 1$. Furthermore, since $\lim_{\xi \to \infty} \xi - \xi^{\alpha} = \infty$, we may without loss of generality assume from now on that $\xi$ is so large that $\xi - \xi^{\alpha} > \frac{1}{\alpha}$.

Before we proceed any further, we will discuss the function $r_{\xi}(\omega)$ in more detail.

\begin{lemma}\label{L:2.5}
The derivative of $r_{\xi}(\omega)$ for $\omega \not= 0$ is given by
\[
r^{\prime}_{\xi}(\omega) =  \frac{d}{d\omega}r_{\xi}(\omega) = -\beta(\omega) \left( 1 + \mbox{sgn}(\omega)\cdot \alpha \frac{\xi - \omega}{1 + |\omega |} \right).
\]
\end{lemma}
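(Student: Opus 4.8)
The plan is to treat this as a direct differentiation of the explicit expression
\[
r_{\xi}(\omega) = \frac{\xi - \omega}{(1 + |\omega|)^{\alpha}} = (\xi - \omega)\,(1 + |\omega|)^{-\alpha},
\]
with the single guiding observation that the only non-smooth ingredient is the absolute value, which forces the restriction $\omega \neq 0$. For $\omega \neq 0$ the map $\omega \mapsto |\omega|$ is differentiable with $\tfrac{d}{d\omega}|\omega| = \mbox{sgn}(\omega)$, so every factor above is differentiable and the product rule applies.

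First I would write $r_{\xi}$ as a product $u(\omega)\,w(\omega)$ with $u(\omega) = \xi - \omega$ and $w(\omega) = (1 + |\omega|)^{-\alpha} = \beta(\omega)$. Then $u'(\omega) = -1$, and by the chain rule together with $\tfrac{d}{d\omega}|\omega| = \mbox{sgn}(\omega)$ one gets
\[
w'(\omega) = -\alpha\,(1 + |\omega|)^{-\alpha - 1}\,\mbox{sgn}(\omega) = -\alpha\,\mbox{sgn}(\omega)\,\frac{\beta(\omega)}{1 + |\omega|}.
\]
The product rule then yields
\[
r_{\xi}'(\omega) = u'(\omega)\,w(\omega) + u(\omega)\,w'(\omega)
= -\beta(\omega) - \alpha\,\mbox{sgn}(\omega)\,\frac{(\xi - \omega)\,\beta(\omega)}{1 + |\omega|}.
\]

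Finally I would factor out $-\beta(\omega)$ from both terms to obtain exactly
\[
r_{\xi}'(\omega) = -\beta(\omega)\left(1 + \mbox{sgn}(\omega)\cdot \alpha\,\frac{\xi - \omega}{1 + |\omega|}\right),
\]
which is the claimed formula. There is no genuine obstacle here: the computation is entirely routine, and the only point requiring care is the exclusion of $\omega = 0$, where $|\omega|$ fails to be differentiable, together with keeping track of the sign $\mbox{sgn}(\omega)$ coming from the derivative of $|\omega|$ on the two half-lines $\omega > 0$ and $\omega < 0$.
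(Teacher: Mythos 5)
Your computation is correct and is essentially the same elementary differentiation as in the paper; the only cosmetic difference is that you carry the factor $\mbox{sgn}(\omega)$ through a single unified product-rule calculation, whereas the paper splits into the cases $\omega>0$ and $\omega<0$ and recombines at the end.
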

\begin{proof}
For $\omega > 0$, we have $r_{\xi}(\omega) = \frac{\xi - \omega}{(1 + \omega)^{\alpha}}$, which differentiates to
\begin{align*}
r^{\prime}_{\xi}(\omega) & = \frac{-(1 + \omega)^{\alpha} - (\xi - \omega) \alpha (1 + \omega)^{\alpha - 1}}{(1 + \omega)^{2\alpha}} \\
& = - \frac{1}{(1 + \omega)^{\alpha}} \left( 1 + \alpha \frac{\xi - \omega}{1 + \omega } \right).
\end{align*}
For $\omega < 0$, we have $r_{\xi}(\omega) = \frac{\xi - \omega}{(1 - \omega)^{\alpha}}$, which differentiates to
\begin{align*}
r^{\prime}_{\xi}(\omega) & = \frac{-(1 - \omega)^{\alpha} + (\xi - \omega) \alpha (1 - \omega)^{\alpha - 1}}{(1 - \omega)^{2\alpha}} \\
& = - \frac{1}{(1 - \omega)^{\alpha}} \left( 1 - \alpha \frac{\xi - \omega}{1 - \omega } \right).
\end{align*}
In summary,
\[
r^{\prime}_{\xi}(\omega) = -\frac{1}{(1 + |\omega |)^{\alpha}}\left( 1 + \mbox{sgn}(\omega)\cdot \alpha \frac{\xi - \omega}{1 + |\omega |} \right),
\]
as claimed.
\end{proof}

\begin{lemma}\label{L:2.6}
Set $\omega^{\ast}_{\xi} := \frac{1 - \alpha \xi}{1 - \alpha} < 0$. Then $r_{\xi}(\omega)$ is strictly decreasing
on $]-\infty, \omega^{\ast}_{\xi}[$, strictly increasing on $]\omega^{\ast}_{\xi}, 0[$, and strictly decreasing on
$]0, +\infty[$. At the local minimum $\omega^{\ast}_{\xi}$, we have 
$r_{\xi}(\omega^{\ast}_{\xi}) = \frac{1}{\alpha^{\alpha}} \left( \frac{\xi - 1}{1 - \alpha} \right)^{1 - \alpha} > 0$. 
At the local maximum 0, we have $r_{\xi}(0) = \xi > 0$.
\end{lemma}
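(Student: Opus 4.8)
The plan is to read off the sign of $r'_\xi$ directly from the derivative computed in Lemma \ref{L:2.5}. Since $\beta(\omega) = (1+|\omega|)^{-\alpha} > 0$, the sign of $r'_\xi(\omega)$ is the opposite of the sign of the bracket
\[
B(\omega) := 1 + \mbox{sgn}(\omega)\cdot\alpha\frac{\xi - \omega}{1 + |\omega|}.
\]
First I would clear the positive denominator $1 + |\omega|$, which reduces the sign analysis of $B$ to that of an affine function of $\omega$ on each of the half-lines $\omega > 0$ and $\omega < 0$.

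For $\omega > 0$ we have $\mbox{sgn}(\omega) = 1$ and $1 + |\omega| = 1 + \omega$, so
\[
B(\omega) = \frac{(1+\omega) + \alpha(\xi - \omega)}{1 + \omega} = \frac{1 + \alpha\xi + (1-\alpha)\omega}{1 + \omega},
\]
whose numerator is a sum of strictly positive terms (using $0 < \alpha < 1$ and $\xi > 0$); hence $B(\omega) > 0$ and $r'_\xi(\omega) < 0$ throughout $]0,+\infty[$, giving strict decrease there. For $\omega < 0$ we have $1 + |\omega| = 1 - \omega$, and the numerator becomes $1 - \alpha\xi - (1-\alpha)\omega$, an affine function of $\omega$ with negative slope that vanishes exactly at $\omega^{\ast}_\xi = \frac{1-\alpha\xi}{1-\alpha}$. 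Because we have reduced to the regime $\alpha\xi > 2 > 1$, the root $\omega^{\ast}_\xi$ is genuinely negative, and the numerator is positive for $\omega < \omega^{\ast}_\xi$ and negative for $\omega^{\ast}_\xi < \omega < 0$. This translates into $r'_\xi < 0$ on $]-\infty, \omega^{\ast}_\xi[$ and $r'_\xi > 0$ on $]\omega^{\ast}_\xi, 0[$, confirming the claimed pattern (strictly decreasing, then strictly increasing, then strictly decreasing) and identifying $\omega^{\ast}_\xi$ as a strict local minimum.

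It then remains to evaluate $r_\xi$ at the two distinguished points. At $\omega = 0$ the value $r_\xi(0) = \xi$ is immediate. For the local minimum I would substitute $\omega^{\ast}_\xi$ into the branch $r_\xi(\omega) = (\xi - \omega)(1 - \omega)^{-\alpha}$ valid for $\omega < 0$, using the two simplifications $\xi - \omega^{\ast}_\xi = \frac{\xi - 1}{1 - \alpha}$ and $1 - \omega^{\ast}_\xi = \frac{\alpha(\xi-1)}{1-\alpha}$; writing $u := \frac{\xi-1}{1-\alpha} > 0$, so that $1 - \omega^{\ast}_\xi = \alpha u$, these give $r_\xi(\omega^{\ast}_\xi) = u\,(\alpha u)^{-\alpha} = \alpha^{-\alpha} u^{1-\alpha}$, which is the stated expression, and its positivity follows from $\xi > 1$. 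The only genuinely delicate point is keeping track of signs when clearing the denominator for $\omega < 0$ and verifying that $\omega^{\ast}_\xi < 0$; this is exactly where the standing assumption $\xi > \frac{2}{\alpha}$ is used, whereas the algebraic evaluation of $r_\xi(\omega^{\ast}_\xi)$ is entirely routine.
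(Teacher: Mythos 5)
Your proposal is correct and follows essentially the same route as the paper: both read off the sign of $r'_\xi$ from Lemma \ref{L:2.5}, clear the positive denominator, and use $\alpha\xi>1$ to place $\omega^\ast_\xi$ on the negative half-line. Your sign determination via the affine numerator with negative slope is a slight streamlining of the paper's continuity-and-limits argument on $]-\infty,0[$, and you explicitly carry out the evaluation of $r_\xi(\omega^\ast_\xi)$ that the paper dismisses as "a simple computation"; both are fine.
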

\begin{proof}
Let $\omega > 0$. Then 
\begin{align*}
r^{\prime}_{\xi}(\omega) & = -\beta(\omega) \left( 1 + \alpha \frac{\xi - \omega}{1 + \omega} \right) \\
& = - \frac{\beta(\omega)}{1 + \omega}\left( 1 + \omega(1 - \alpha) + \alpha \xi \right) < 0,
\end{align*}
so $r_{\xi}(\omega)$ is strictly decreasing for $\omega > 0$.\\
For $\omega < 0$, note that $r^{\prime}_{\xi}(\omega) = -\beta(\omega) \left(1 - \alpha \frac{\xi - \omega}{1 - \omega}  
\right) = 0$ if and only if $\left(1 - \alpha \frac{\xi - \omega}{1 - \omega}  \right) = 0$ if and only if 
$1 - \omega(1-\alpha) - \alpha \xi = 0$ if and only if $\omega = \frac{1 - \alpha \xi}{1 - \alpha} = \omega^{\ast}_{\xi}$. 
Since $\alpha \xi > 1$, $\omega^{\ast}_{\xi} < 0$. Since $r^{\prime}_{\xi}(\omega)$ is continuous on $]-\infty, 0[$, this is 
the only place where it can change its sign. If $\omega \to - \infty$, then $r^{\prime}_{\xi}(\omega) \uparrow 0$ and 
$r^{\prime}_{\xi}(\omega) < 0$, so $r^{\prime}_{\xi}(\omega) < 0$ for $\omega < \omega^{\ast}_{\xi}$, and $r_{\xi}(\omega)$ 
is decreasing. If $\omega \to 0$, then $r^{\prime}_{\xi}(\omega) \to \alpha \xi - 1 > 0$, so $r^{\prime}_{\xi}(\omega) > 0$ 
for $0 > \omega > \omega^{\ast}_{\xi}$, and $r_{\xi}(\omega)$ is increasing. The values at the local minimum 
$\omega_{\xi}^{\ast}$ and the local maximum $0$ follow by a simple computation.
\end{proof}

Our final lemma will help to compute and estimate several integrals in the following.

\begin{lemma}\label{L:2.7}
Let $I \subseteq \R$ be an interval such that $r_{\xi}$ is monotonous on $I$. Then
\[
\int_I | \hat{\psi}(r_{\xi}(\omega)) |^2 \beta(\omega)\,d\omega = \int_{r_{\xi}(I)} | \hat{\psi}(z) |^2 \frac{1}{| h_{\xi}(r_{\xi}^{-1}(z) |} \,dz,
\]
with
\[
h_{\xi}(\omega) := 1 + \mbox{sgn}(\omega)\cdot \alpha \frac{\xi - \omega}{1 + |\omega |}, \quad \quad \omega \in I.
\]
\end{lemma}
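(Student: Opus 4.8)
The plan is to treat this as a one-dimensional change of variables, with the substitution $z = r_{\xi}(\omega)$ and the Jacobian factor supplied directly by Lemma \ref{L:2.5}. The single key observation that makes everything fall into place is that the derivative computed there factors exactly as
\[
r^{\prime}_{\xi}(\omega) = -\beta(\omega)\,h_{\xi}(\omega),
\]
so that, since $\beta(\omega) > 0$ for every $\omega$, one has $|r^{\prime}_{\xi}(\omega)| = \beta(\omega)\,|h_{\xi}(\omega)|$. This is precisely the identity that converts the weight $\beta(\omega)$ on the left-hand side into the factor $1/|h_{\xi}|$ on the right-hand side.

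Before invoking the substitution theorem I would check its hypotheses on $I$. By assumption $r_{\xi}$ is monotone on $I$, and Lemma \ref{L:2.6} shows that its maximal intervals of monotonicity are $]-\infty,\omega^{\ast}_{\xi}[$, $]\omega^{\ast}_{\xi},0[$ and $]0,+\infty[$, none of which contains the single non-smooth point $\omega = 0$ in its interior. Consequently $r_{\xi}$ is $C^{1}$ on $I$, and strict monotonicity (again from Lemma \ref{L:2.6}) forces $r^{\prime}_{\xi} \neq 0$, i.e. $h_{\xi}(\omega) \neq 0$, throughout $I$. Hence $r_{\xi}$ is a $C^{1}$-diffeomorphism of $I$ onto $r_{\xi}(I)$, its inverse $r_{\xi}^{-1}$ is $C^{1}$, and the integrand $1/|h_{\xi}(r_{\xi}^{-1}(z))|$ on the right is well-defined on $r_{\xi}(I)$.

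With these preliminaries in hand the conclusion is immediate: applying the change of variables $z = r_{\xi}(\omega)$ to the right-hand integral, together with $|dz| = |r^{\prime}_{\xi}(\omega)|\,d\omega = \beta(\omega)\,|h_{\xi}(\omega)|\,d\omega$ and $r_{\xi}^{-1}(r_{\xi}(\omega)) = \omega$, would yield
\[
\int_{r_{\xi}(I)} |\hat{\psi}(z)|^{2}\,\frac{1}{|h_{\xi}(r_{\xi}^{-1}(z))|}\,dz = \int_{I} |\hat{\psi}(r_{\xi}(\omega))|^{2}\,\frac{\beta(\omega)\,|h_{\xi}(\omega)|}{|h_{\xi}(\omega)|}\,d\omega,
\]
and the factors $|h_{\xi}(\omega)|$ cancel, leaving exactly the left-hand side of the lemma. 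The only point demanding any care rather than being pure routine is the verification of the $C^{1}$ and non-vanishing-derivative conditions near the non-smooth point $\omega = 0$; but since the monotonicity intervals furnished by Lemma \ref{L:2.6} are open and exclude $\omega = 0$, this obstacle evaporates and the remainder is a textbook substitution.
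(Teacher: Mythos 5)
Your argument is exactly the paper's: the authors likewise note that $0 \notin \mathrm{int}(I)$, invoke the factorization $r^{\prime}_{\xi}(\omega) = -\beta(\omega)h_{\xi}(\omega)$ from Lemma \ref{L:2.5}, and conclude by the substitution $z = r_{\xi}(\omega)$. Your write-up merely spells out the $C^{1}$-diffeomorphism hypotheses in more detail; the proof is correct and identical in substance.
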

\begin{proof}
We necessarily have $0 \not\in \mbox{int}(I)$, the interior of $I$, since, by Lemma \ref{L:2.6}, $r_{\xi}$ has a local maximum in $0$, 
so $r_{\xi}$ is not monotonous if $0 \in \mbox{int}(I)$.

Observe that, by Lemma \ref{L:2.5},
\begin{equation}\label{h-and-r}
r_{\xi}^{\prime}(\omega) = -\beta(\omega) h_{\xi}(\omega);
\end{equation}
the statement follows from this by the substitution $z = r_{\xi}(\omega)$, $dz = -\beta(\omega) h_{\xi}(\omega)\,d\omega$. 
\end{proof}

We are now ready to finish the proof of Theorem \ref{T:2.1}.

We split the integral defining $m_\psi(\xi)$ into four parts,
\[
m_\psi(\xi) = \int_{\R} |\hat{\psi}(r_{\xi}(\omega)) |^2 \beta(\omega)\, d\omega = \int_{I_1}\ldots + \int_{I_2}\ldots + \int_{I_3}\ldots + \int_{I_4}\ldots,
\]
and treat each part separately. The four intervals are
\begin{align*}
I_1 & = \Big(-\infty,\ \omega_{\xi}^{\ast} - \frac{\alpha \xi^{\alpha}}{2(1 - \alpha)}\Big], \\
I_2 & = \Big[\omega_{\xi}^{\ast} -\frac{\alpha \xi^{\alpha}}{2(1-\alpha)},\ \omega_{\xi}^{\ast} + \frac{\alpha \xi^{\alpha}}{2(1-\alpha)}\Big],\\
I_3 & = \Big[\omega_{\xi}^{\ast} + \frac{\alpha \xi^{\alpha}}{2(1 - \alpha)},\  0\Big], \mbox{ and } \\
I_4 & = [0,\ \infty).
\end{align*}

Observe that
\begin{align*}
\omega_{\xi}^{\ast} + \frac{\alpha \xi^{\alpha}}{2(1 - \alpha)}  = \frac{1 - \alpha \xi}{1 - \alpha} + \frac{\frac{\alpha}{2}
\xi^{\alpha}}{1 - \alpha}  < \frac{1 - \alpha \xi + \frac{\alpha}{2}\xi}{1 - \alpha}  = \frac{1 - \frac{\alpha}{2}\xi}{1 - \alpha} < 0,
\end{align*}
since $\xi > \frac{2}{\alpha}$. Thus $I_2 \subset (-\infty, 0]$, and $I_3$ is well defined.

Note that $r_{\xi}$ is monotonous on $I_1$, $I_3$ and $I_4$.

\begin{itemize}[leftmargin=15pt]

\item $\boldsymbol{I_2} = \Big[\omega_{\xi}^{\ast} - \frac{\alpha \xi^{\alpha}}{2(1 - \alpha)}, \omega_{\xi}^{\ast} + 
\frac{\alpha \xi^{\alpha}}{2(1 - \alpha)}\Big]$: On $I_2$, $r_{\xi}$ has a local minimum at $\omega_{\xi}^{\ast}$. Thus
\begin{align*}
\int_{I_2} & |\hat{\psi}(r_{\xi}(\omega)) |^2 \beta(\omega)\, d\omega \leq \int_{I_2} C^2 (1 + | r_{\xi}(\omega) |)^{-2r} \, d\omega \\
& \leq \int_{I_2} C^2 (1 + | r_{\xi}(\omega_{\xi}^{\ast}) |)^{-2r}\, d\omega \\
& = \frac{\alpha}{1 - \alpha} C^2 \xi^{\alpha} (1 + | r_{\xi}(\omega_{\xi}^{\ast}) |)^{-2r}.
\end{align*}
By Lemma \ref{L:2.6}, we asymptotically have $| r_{\xi}(\omega_{\xi}^{\ast}) | \sim | \xi |^{1 - \alpha}$, so
\[
\int_{I_2} |\hat{\psi}(r_{\xi}(\omega)) |^2 \beta(\omega)\, d\omega \sim |\xi |^{\alpha} (| \xi |^{1 - \alpha})^{-2r} = |\xi |^{\alpha - 2r(1 - \alpha)};
\]
since $r > \frac{\alpha}{2(1-\alpha)}$, we have $\alpha - 2r(1 - \alpha) < 0$, thus
\[
\int_{I_2} |\hat{\psi}(r_{\xi}(\omega)) |^2 \beta(\omega)\, d\omega \to 0
\]
for $\xi \to \infty$.

\item $\boldsymbol{I_1 }= (-\infty, \omega_{\xi}^{\ast} - \frac{\alpha \xi^{\alpha}}{2(1 - \alpha)}\Big]$: We use Lemma \ref{L:2.7} to write
\[
\int_{I_1} | \hat{\psi}(r_{\xi}(\omega)) |^2 \beta(\omega) \, d\omega = \int_{r_{\xi}(I_1)} | \hat{\psi}(z) |^2 \frac{1}{| h_{\xi}(r_{\xi}^{-1}(z) |} \,dz,
\]
where in this case
\[
h_{\xi}(\omega) = 1 - \alpha \frac{\xi - \omega}{1 - \omega }, \quad \quad \omega \in (-\infty,0].
\]
By \eqref{h-and-r}, we have $h_{\xi}(\omega) > 0$ on the interval \\ 
$(-\infty, \omega_{\xi}^{\ast}[$, by Lemma \ref{L:2.6}. Furthermore,
\begin{equation}\label{h-deriv}
h_{\xi}^{\prime}(\omega) = - \alpha \frac{\xi - 1}{(1 - \omega)^2} < 0
\end{equation}
on $(-\infty, 0]$, so $h_{\xi}$ is a monotone decreasing function on $I_1$, and hence $h_{\xi}$ assumes its infimum 
at the rightmost point of $I_1$, i.e. at $\omega_{\xi}^{\ast} - \frac{\alpha \xi^{\alpha}}{2(1 - \alpha)}$. The infimum is given by
\[
\inf_{\omega \in I_1} | h_{\xi}(\omega)| = \left| h_{\xi}\left(\omega_{\xi}^{\ast} - \frac{\alpha \xi^{\alpha}}{2(1 - \alpha)}
\right) \right| = \ldots = \frac{\frac{1-\alpha}{2}\xi^{\alpha}}{\xi - 1 + \frac{1}{2}\xi^{\alpha}},
\]
which behaves asymptotically like $| \xi|^{\alpha - 1}$. We conclude
\[
\frac{1}{| h_{\xi}(\omega) |} \leq \frac{1}{\inf_{\omega \in I_1} | h_{\xi}(\omega) |} \sim | \xi|^{1 - \alpha},
\]
for $\omega \in I_1$. For the transformed interval, we find
$r_{\xi}(I_1)  = [z_1(\xi), + \infty)$ with
\begin{align*}
z_1(\xi) & = r_{\xi}\left(\omega_{\xi}^{\ast} - \frac{\alpha \xi^{\alpha}}{2(1 - \alpha)}\right) \\
& = r_{\xi}\left(\frac{1 - \alpha \xi - \frac{\alpha}{2}\xi^{\alpha}}{1 - \alpha}\right) \\
& = \ldots = \frac{1}{(1-\alpha)^{1 - \alpha}\cdot\alpha^{\alpha}}\cdot \frac{\xi - 1 + \frac{\alpha}{2}\xi^{\alpha}}{(\xi - 1 + \frac{1}{2}\xi^{\alpha})^{\alpha}},
\end{align*}
which also behaves like $| \xi|^{1 - \alpha}$. Putting it all together, we find
\begin{align*}
\int_{r_{\xi}(I_1)} | \hat{\psi}(z) |^2 & \frac{1}{| h_{\xi}(r_{\xi}^{-1}(z) |} \,dz \\
& \leq \frac{1}{\inf_{\omega \in I_1} | h_{\xi}(\omega) |} \int_{r_{\xi}(I_1)} | \hat{\psi}(z) |^2\,dz \\
& \leq \frac{1}{\inf_{\omega \in I_1} | h_{\xi}(\omega)| } \int_{z_1(\xi)}^{\infty} C^2(1 + |z|)^{-2r}\,dz \\
& = \frac{C^2}{(2r - 1)\cdot \inf_{\omega \in I_1} | h_{\xi}(\omega)| } (1 + z_1(\xi))^{-2r + 1},
\end{align*}
that is asymptotically equivalent to $|\xi|^{1-\alpha} |\xi |^{(1-\alpha)(-2r + 1)} = \\ |\xi |^{2(1-\alpha)(1-r)}$. Since $2(1-\alpha)(1 - r) < 0$, 
we finally conclude as $r>1$
\[
\int_{I_1} | \hat{\psi}(r_{\xi}(\omega)) |^2 \beta(\omega) \, d\omega = \int_{r_{\xi}(I_1)} | \hat{\psi}(z) |^2  \frac{1}{| 
h_{\xi}(r_{\xi}^{-1}(z)) |} \,dz \to 0\]
for $\xi \to \infty$.

\item $\boldsymbol{I_3} = \big[\omega_{\xi}^{\ast} + \frac{\alpha \xi^{\alpha}}{2(1 - \alpha)}, 0\big]$: This is very similar to the previous case $I_1$. We have
\[
\int_{I_3} | \hat{\psi}(r_{\xi}(\omega)) |^2 \beta(\omega) \, d\omega = \int_{z_2(\xi)}^{\xi} | \hat{\psi}(z) |^2 \frac{1}{| h_{\xi}(r_{\xi}^{-1}(z) |} \,dz
\]
with $h_{\xi}$ as above, and
\[
z_2(\xi) = \frac{1}{(1-\alpha)^{1 - \alpha}\cdot\alpha^{\alpha}} \cdot\frac{\xi - 1 - \frac{\alpha}{2}\xi^{\alpha}}{(\xi - 1 - \frac{1}{2}\xi^{\alpha})^{\alpha}}.
\]
On $I_3$, $h_{\xi}(\omega) < 0$ and $h^{\prime}_{\xi}(\omega) < 0$ by \eqref{h-deriv}, so
\[
\inf_{\omega \in I_3} | h_{\xi}(\omega)| = | h_{\xi}(\omega_{\xi}^{\ast} + \frac{\alpha \xi^{\alpha}}{2(1 - \alpha)}) | 
= \ldots = \frac{\frac{1-\alpha}{2}\xi^{\alpha}}{\xi - 1 - \frac{1}{2}\xi^{\alpha}} \sim |\xi |^{\alpha - 1}.
\]
This yields
\begin{align*}
\int_{I_3} | \hat{\psi}(r_{\xi}(\omega)) & |^2 \beta(\omega) \, d\omega \\
& \leq \frac{1}{\inf_{\omega \in I_3} | h_{\xi}(\omega)| } \int_{z_2(\xi)}^{\xi} C^2(1 + |z|)^{-2r}\,dz \\
& \sim |\xi |^{2(1-\alpha)(1-r)},
\end{align*}
and this goes to $0$ for $\xi \to \infty$ because $2(1-\alpha)(1 - r) < 0$.

\item $\boldsymbol{I_4} = [0, \infty)$: We consider
\[
\int_{I_4} | \hat{\psi}(r_{\xi}(\omega)) |^2 \beta(\omega)\,d\omega = \int_{r_{\xi}(I_4)} | \hat{\psi}(z) |^2 \frac{1}{| h_{\xi}(r_{\xi}^{-1}(z) |} \,dz,
\]
now with
\[
h_{\xi}(\omega) = 1 + \alpha \frac{\xi - \omega}{1 + \omega }, \quad \quad \omega \in I_4.
\]
Since $r_{\xi}(0) = \xi$ and $\lim_{\omega \to \infty} r_{\xi}(\omega) = -\infty$, we have $r_{\xi}(I_4) = (-\infty, \xi]$. Let $\varepsilon > 0$ be given. Choose $A > 0$ such that
\[
\int_{\R \setminus [-A, A]} | \hat{\psi}(z) |^2\,dz \leq \varepsilon,
\]
that means
\[
\left| \int_{[-A, A]} | \hat{\psi}(z) |^2\,dz - \| \psi \|^2 \right| \leq \varepsilon.
\]
Now assume $\xi > A$. Then
\[
\int_{-\infty}^{\xi} | \hat{\psi}(z) |^2 \frac{1}{| h_{\xi}(r_{\xi}^{-1}(z) |} \,dz  = \int_{[-A, A]}\ldots + \int_{(-\infty, \xi] \setminus [-A, A]}\ldots.
\]
The second integral can be estimated as follows: first observe that
\[
h_{\xi}(\omega) = \frac{1 + \alpha \xi + \omega(1 - \alpha)}{1 + \omega} > 0
\]
on $I_4$. Its derivative is
\[
h_{\xi}^{\prime}(\omega) = - \alpha \frac{\xi + 1}{(1 + \omega)^2} < 0,
\]
so $\inf_{\omega \in I_4} |h_{\xi}(\omega) | = \lim_{\omega \to \infty} | h_{\xi}(\omega) | = 1 - \alpha$. Hence
\[
\int_{(-\infty, \xi] \setminus [-A, A]}\ldots \leq \frac{1}{1 - \alpha} \int_{(-\infty, \xi] \setminus [-A, A]} | \hat{\psi}(z) |^2\,dz \leq \frac{\varepsilon}{1 - \alpha}.
\]
For the first integral, we use that for every fixed $A > 0$,
\[
\lim_{\xi \to \infty} h_{\xi}(r_{\xi}^{-1}(z)) = 1
\]
uniformly on $[-A, A]$. This result can be found 
in \cite[Lemma 5.1 and the proof of Theorem 5.2]{DahForRau+2008}.
Hence, we obtain that
\[
\int_{[-A, A]}\ldots \to \int_{[-A, A]} |\hat{\psi}(z)|^2\,dz
\]
for $\xi \to \infty$. Thus
\[
\left| \int_{I_4} | \hat{\psi}(r_{\xi}(\omega)) |^2 \beta(\omega)\,d\omega - \| \psi \|^2 \right| \leq \varepsilon + 
\frac{\varepsilon}{1 - \alpha},
\]
for $\xi$ sufficiently big.
Since $\varepsilon$ was arbitrary, we conclude
\[
\int_{I_4} | \hat{\psi}(r_{\xi}(\omega)) |^2 \beta(\omega)\,d\omega \to \| \psi \|^2
\]
for $\xi \to \infty$.

\end{itemize}

All in all, we have thus shown
\[
m_\psi(\xi) = \int_{I_1}\ldots + \int_{I_2}\ldots + \int_{I_3}\ldots + \int_{I_4}\ldots \longrightarrow \|\psi\|^2
\]
for $\xi \to \infty$, which finally concludes the proof of Theorem \ref{T:2.1}.
\end{proof}

%%%%%%%%%%%%%%%%%%%%%%%%%%%%%%%%%%%%%%%%%%%%%%%%%%%%%

%{\color{red} Concluding remarks?}
\begin{remark}
As already stated in the introductory part, it is a major objective of this paper to prove the existence of compactly
supported admissible windows for the $\alpha$-modulation transform. The assumptions of Theorem \ref{T:2.1} are in particular 
satisfied for $\psi \in \mathcal{S}(\R) \subseteq L^2(\R)$, the Schwartz class 
of infinitely differentiable rapidly decaying functions (their Fourier transforms are again of the same class, thus decay 
faster than 
any given polynomial). Since there exist Schwartz functions with compact support, the  existence of compactly supported 
admissible functions is guaranteed by Theorem \ref{T:2.1}. 
\end{remark}

%%%%%%%%%%%%%%%%%%%%%%%%%%%%%%%%%%%%%%%%%%%%%%%%%%%%%%%

\section{Generalized coorbit theory}\label{sec:coorbit_basics}

%%%%%%%%%%%%%%%%%%%%%%%%%%%%%%%%%%%%%%%%%%%%%%%%%%%%%%%
In this section we briefly introduce the concept of generalized coorbit theory. We will, however, only present the bare 
necessities from \cite{DahForRau+2008} and \cite{FoRa2005}
to grasp the underlying idea and motivate the calculations in Section \ref{sec:coorbit_alpha}. For further reading on coorbit theory we 
refer the interested reader to \cite{FeiGro_88,FeiGro_89,FeiGro_89-2} (classical coorbit theory) and 
\cite{DahForRau+2008,For2007,FoRa2005} (generalized coorbit theory).

\subsection{Construction of generalized coorbit spaces}\label{subsec:generalized_coorbit}

The fundamental idea behind coorbit theory is that features of a function, like smoothness or decay,  manifest 
in the behavior of its voice transform. Hence, one constructs coorbit spaces as those functions/distributions whose voice transform
belongs to a certain Banach space. In the present paper we will focus on weighted Lebesgue spaces  \cite{DahForRau+2008}. However, more general
Banach spaces may be used with some modifications,
see \cite{FoRa2005}.

In this section we consider the same general setting as in Section \ref{sec:prel0}, i.e. let $X$ be a homogeneous space, 
$\sigma$ a section from $X$ to $G$ and
$\pi$ a unitary group representation.
Let $v\geq1$ be measurable and $1\leq p\leq\infty$ and define
\[
L^p_v(X):=\big\{F\mbox{ measurable, }Fv\in L^p(X)\big\}.
\]
equipped with the natural norm $\|F\|_{L^p_v}:=\|Fv\|_{L^p}$.
Let $\psi$ be admissible and define the reproducing kernel $R$ by
$$
\mathcal{R}(x,y):=\langle A^{-1}_\sigma\pi(\sigma(x))\psi,\pi(\sigma(y))\psi\rangle.
$$
$\mathcal{R}$ reproduces the image of the voice transforms, i.e. for $f\in\H$ and $\forall\ x\in X$ it holds
\begin{equation}\label{rep-formulae1}
V_\psi f(x)=\int_X V_\psi f(y)\mathcal{R}(y,x)d\mu(y),
\end{equation}
\begin{equation}\label{rep-formulae2} 
W_\psi f(x)=\int_X W_\psi f(y)\mathcal{R}(y,x)d\mu(y).
\end{equation}
Moreover, we define the weight function
$$
w(x,y):=\max\left\{\frac{v(x)}{v(y)},\frac{v(y)}{v(x)}\right\}.
$$
The following condition is fundamental for establishing generalized coorbit theory. 
\begin{equation}\label{fundamental-condi}
 \rho:=\mbox{ess}\sup_{y\in X}\int_X |\mathcal{R}(x,y)|w(x,y)d\mu(x)<\infty
\end{equation}
Throughout the rest of this section we will assume that \eqref{fundamental-condi} holds.

We define the reservoir spaces $\H_{1,v}$ and  $\K_{1,v}$ by
\begin{align*}
 &\H_{1,v}:=\{f\in \H:\ W_\psi f\in L^1_v(X)\}\\
  &\K_{1,v}:=\{f\in \H:\ V_\psi f\in L^1_v(X)\},
\end{align*}
with norms $\|f\|_{\H_{1,v}}:=\|W_\psi f\|_{L^1_v}$ and $\|f\|_{\K_{1,v}}:=\|V_\psi f\|_{L^1_v}$. 
The fundamental condition \eqref{fundamental-condi} guarantees that $\pi(\sigma(x))\psi\in \H_{1,v}$ and 
$A_\sigma^{-1}\pi(\sigma(x))\in\K_{1,v}$, $\forall x\in X$. Consequently, both $\H_{1,v}$ and $\K_{1,v}$ are dense in $\H$
and the embedding is continuous. Moreover, the spaces are complete, i.e. Banach spaces.

Now, introduce the anti dual spaces $\H_{1,v}'$ and $\K_{1,v}'$ (the space of all bounded and conjugate linear functionals on $\H_{1,v}$ and 
$\K_{1,v}$ respectively), then
\begin{align*}
 &\H_{1,v}\subset \H\subset \H_{1,v}'\\
  &\K_{1,v}\subset \H\subset \K_{1,v}'.
\end{align*}
It can be shown that $\H_{1,v}$ is norm dense in $\H$ and weak-$\ast$ dense in $\H_{1,v}'$. 
The operators $V_\psi$ and $W_\psi$ can be extended to $\H_{1,v}'$ and $\K_{1,v}'$, 
respectively, by setting
\begin{align*}
 &V_{\psi}f(x):=\langle f,\pi(\sigma(x))\psi\rangle_{\H_{1,v}'\times\H_{1,v}}\\
  &W_{\psi}f(x):=\langle f,A_\sigma^{-1}\pi(\sigma(x))\psi\rangle_{\K_{1,v}'\times\K_{1,v}}.
\end{align*}

For $1\leq p\leq\infty$, the generalized coorbit spaces $\H_{p,v}$ and $\H_{p,v}$ may be defined as
\setlength\jot{0.3cm}
\begin{align*}
 &\H_{p,v}:=\{f\in \K_{1,v}':\ W_\psi f\in L^p_v(X)\},\\ 
  &\K_{p,v}:=\{f\in \H_{1,v}':\ V_\psi f\in L^p_v(X)\},
\end{align*}
with norms $\|f\|_{\H_{p,v}}:=\|W_\psi f\|_{L^p_v}$ and $\|f\|_{\K_{p,v}}:=\|V_\psi f\|_{L^p_v}$.

\begin{theorem}
Let $\psi\in \H$ be admissible, such that \eqref{fundamental-condi} is satisfied, then both $\H_{p,v}$ and 
$\K_{p,v}$ are Banach spaces and the reproducing formulas  \eqref{rep-formulae1} and \eqref{rep-formulae2} extend to $\H_{p,v}$ and 
$\K_{p,v}$.
\end{theorem}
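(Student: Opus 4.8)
The plan is to realize each coorbit space as a closed subspace of the solid Banach space $L^p_v(X)$ through its voice transform, so that completeness is simply inherited. The central object is the kernel operator
\[
K\colon F \mapsto \Big(x \mapsto \int_X F(y)\,\mathcal{R}(y,x)\,d\mu(y)\Big).
\]
First I would show that $K$ is bounded on $L^p_v(X)$ for every $1 \le p \le \infty$ with norm at most $\rho$. Conjugating by the isometry $L^p_v(X) \cong L^p(X)$, $F \mapsto Fv$, turns $K$ into an integral operator with kernel $\tfrac{v(x)}{v(y)}\mathcal{R}(y,x)$, whose modulus is dominated by $w(x,y)\,|\mathcal{R}(x,y)|$ once one records the two symmetries $|\mathcal{R}(x,y)| = |\mathcal{R}(y,x)|$ (from self-adjointness of $A_\sigma^{-1}$, giving $\mathcal{R}(x,y)=\overline{\mathcal{R}(y,x)}$) and $w(x,y)=w(y,x)$. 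The fundamental condition \eqref{fundamental-condi} then bounds both the row and the column integrals of this kernel by $\rho$, so the classical Schur test yields $\|K\|_{L^p_v \to L^p_v} \le \rho$ uniformly in $p$.

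Next I would establish the extended reproducing formula: for any $f \in \K_{1,v}'$ one has $W_\psi f = K(W_\psi f)$, and symmetrically $V_\psi f = K(V_\psi f)$ for $f \in \H_{1,v}'$. This is the identity that transports the Hilbert-space machinery to distributions, and I would prove it by transposing the already-known reproducing identity for the genuine test vectors $A_\sigma^{-1}\pi(\sigma(x))\psi \in \K_{1,v}$, pulling the functional $f$ through the norm-convergent vector-valued integral. As a consequence $W_\psi$ maps $\H_{p,v}$ into the fixed-point set $M^p_v := \{F \in L^p_v : F = KF\} = \ker(I-K)$, which is closed in $L^p_v$ because $K$ is bounded. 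By the very definition of the norm, $W_\psi\colon \H_{p,v} \to M^p_v$ is isometric; injectivity follows from the totality of $\{A_\sigma^{-1}\pi(\sigma(x))\psi\}_{x\in X}$ in $\K_{1,v}$, equivalently from the extended identity $\langle f, h\rangle = \langle W_\psi f, V_\psi h\rangle$.

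It remains to prove surjectivity of $W_\psi$ onto $M^p_v$, the correspondence principle, which I expect to be the main obstacle. Given $F \in M^p_v$ I would define $f \in \K_{1,v}'$ by $\langle f, h\rangle := \int_X F(y)\,\overline{V_\psi h(y)}\,d\mu(y)$ and verify that it is a bounded functional with $W_\psi f = F$. The second point is a direct computation: testing against $h = A_\sigma^{-1}\pi(\sigma(x))\psi$ gives $V_\psi h(y) = \overline{\mathcal{R}(x,y)}$, whence $W_\psi f(x) = (KF)(x) = F(x)$. The delicate point is boundedness: for $h \in \K_{1,v}$ the function $V_\psi h$ lies in $L^1_v$ and is reproducing, and I would show that every reproducing $G \in L^1_v$ in fact lies in $L^{p'}_{1/v}$ with $\|G\|_{L^{p'}_{1/v}} \lesssim \|G\|_{L^1_v}$ — first deducing $G \in L^\infty_{1/v}$ from $G = KG$, the boundedness of $\mathcal{R}$ and $v \ge 1$, then interpolating against $G \in L^1_v$. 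Hölder's inequality for the dual pair $(L^p_v, L^{p'}_{1/v})$, with the endpoint $p=\infty$ treated directly, then gives $|\langle f, h\rangle| \lesssim \|F\|_{L^p_v}\|h\|_{\K_{1,v}}$. This exhibits $W_\psi$ as an isometric isomorphism of $\H_{p,v}$ onto the closed subspace $M^p_v$ of $L^p_v$, so $\H_{p,v}$ is a Banach space and \eqref{rep-formulae2} holds on it; the assertions for $\K_{p,v}$ and \eqref{rep-formulae1} follow by interchanging the roles of $V_\psi, W_\psi$ and of $\H_{1,v}, \K_{1,v}$ throughout.
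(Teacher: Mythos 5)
The paper states this theorem without proof, recalling it from \cite{DahForRau+2008} and \cite{FoRa2005}, and your argument is a correct reconstruction of exactly the standard proof given there: Schur-test boundedness of the kernel operator $K$ on $L^p_v$ via \eqref{fundamental-condi} and the symmetries $\overline{\mathcal{R}(x,y)}=\mathcal{R}(y,x)$, $w(x,y)=w(y,x)$, followed by the correspondence principle identifying $W_\psi(\H_{p,v})$ (resp. $V_\psi(\K_{p,v})$) isometrically with the closed reproducing subspace $\ker(I-K)\subseteq L^p_v$. The only blemish is a harmless conjugation slip ($V_\psi\bigl(A_\sigma^{-1}\pi(\sigma(x))\psi\bigr)(y)=\mathcal{R}(x,y)$, not $\overline{\mathcal{R}(x,y)}$), which cancels against the conjugate in your definition of the functional $f$ and leaves the conclusion $W_\psi f=KF=F$ intact.
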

The treatment of two spaces $\H_{p,v}$ and $\K_{p,v}$ is somewhat cumbersome. However, one can show that they coincide 
if the frame $\{\pi(\sigma(x))\psi\}_{x\in X}$ is intrinsically localized, (see  \cite{fogr05,FoRa2005}):

Define $K_{\psi,\varphi}^\kappa(x,y):=\langle A_\sigma^{-\kappa}\pi(\sigma(x))\psi, \pi(\sigma(y))\varphi\rangle$, $\kappa\in\Z$ and set
$K_{\psi}^\kappa:=K_{\psi,\psi}^\kappa$.
If 
\begin{equation}\label{kernel-integrable0}
\mbox{ess}\sup_{y\in X}\int_X |K^0_{\psi}(x,y)|w(x,y)d\mu(x)<\infty,
\end{equation}
and
\begin{equation}\label{kernel-integrable2}
\mbox{ess}\sup_{y\in X}\int_X |K^2_{\psi}(x,y)|w(x,y)d\mu(x)<\infty,
\end{equation}
 then $\H_{p,v}=\K_{p,v}$ for all $p\in[1,\infty]$ with equivalent norms. Moreover, if \eqref{kernel-integrable0} and \eqref{kernel-integrable2} hold
for both $\psi$  and $\varphi$ and if in addition 
\begin{align*}
\max &\left\{
\mbox{ess}\sup_{x\in X}\int_X |K^0_{\psi,\varphi}(x,y)|w(x,y)d\mu(y),\right.\\
&\quad \left.\mbox{ess}\sup_{y\in X}\int_X |K^0_{\psi,\varphi}(x,y)|w(x,y)d\mu(x)\right\}<\infty,
\end{align*}
then $\H_{p,v}$ does not depend on whether it is generated by $\psi$ or by $\varphi$.

Observe that the fundamental condition \eqref{fundamental-condi} amounts to a statement of the integrability of $K_\psi^1$.

\subsection{Discretization in generalized coorbit spaces}\label{subsec:discrete_coorbit}

Throughout this section we mainly present the results from \cite{FoRa2005}
with a minor but crucial modification in the definition of the local oscillations kernel introduced in \cite{howixxl14}.

\begin{definition}\label{def-admissible-covering}
Let $\mathcal{I}$ be a countable index set.
A family $\mathcal{U} = \{U_i\}_{i\in\mathcal{I}}$ of relatively compact subsets of $X$ with non-empty interior is called an 
\textbf{admissible covering} of $X$ if the following conditions are satisfied:
\begin{enumerate}[label={(\roman*)}]
	\item Covering property: $X = \bigcup_{i\in\mathcal{I}}U_i$,
	\vspace{0.3cm}
  \item Finite overlap:\hspace{0.7cm} $\sup_{j\in\mathcal{I}}\#\{i\in\mathcal{I}:\ U_i\cap U_j\neq \emptyset\} \leq N < \infty$.
\end{enumerate}

If, moreover, $\mu(U_i) \geq A > 0$ for all $i \in \mathcal{I}$ and there exists a constant $C>0$, such that $\mu(U_i) \leq C \mu(U_j)$
for all $i,j$ with $U_i \cap U_j \neq \emptyset$, then $\mathcal{U}$ is called \textbf{moderate}.
\end{definition}

\begin{definition}
 Let $\big(B,\norm{B}{\cdot}\big)$ be a Banach space. A family $\{g_i\}_{i\in\mathcal{I}}\subset B$ is called an \textbf{atomic decomposition} if 
 there exists a 
 BK-space $\big(B^\natural,\norm{B^\natural}{\cdot}\big)$ and a family of bounded linear functionals 
 $\{\lambda_i\}_{i\in\mathcal{I}}\subset B^\ast$ such that
 \begin{enumerate}[label={(\roman*)}]
  \item If $f\in B$, then $\{\lambda_i(f)\}_{i\in\mathcal{I}}\in B^\natural$ and there exists $M>0$ such that 
  $$
  \norm{B^\natural}{\{\lambda_i(f)\}_{i\in\mathcal{I}}}\leq M\norm{B}{f},\ \mbox{for all}\ f\in B
  $$
  \item If $\{\nu_i\}_{i\in\mathcal{I}}\in B^\natural$, then $f=\sum_{i\in\mathcal{I}}\nu_i g_i\in B$ 
  (with unconditional convergence in some suitable topology) and there exists $m>0$ such that
   $$
  m\norm{B}{f}\leq \norm{B^\natural}{\{\nu_i\}_{i\in\mathcal{I}}}
  $$
  \item $f=\sum_{i\in\mathcal{I}}\lambda_i(f)g_i,\ \mbox{for all}\ f\in B$
 \end{enumerate}
 A family $\{h_i\}_{i\in\mathcal{I}}\subset B^\ast$ is called a \textbf{Banach frame} if there exists a 
 BK-space $\big(B^\flat,\norm{B^\flat}{\cdot}\big)$ and a bounded linear reconstruction operator $\Omega:B^\flat\to B$ such that
 \begin{enumerate}[label={(\roman*)}]
  \item If $f\in B$, then $\{h_i(f)\}_{i\in\mathcal{I}}\in B^\flat$ and there exists $m,M>0$ such that
  $$
 m\norm{B}{f}\leq\norm{B^\flat}{\{h_i(f)\}_{i\in\mathcal{I}}}\leq M\norm{B}{f},\ \mbox{for all}\ f\in B
  $$
  \item $f=\Omega\big(\{h_i(f)\}_{i\in\mathcal{I}}\big),\ \mbox{for all}\ f\in B$
 \end{enumerate}

 \end{definition}
 
 The generalized {local oscillations kernel} with respect to the moderate admissible covering $\mathcal{U}$ is defined as in 
 \cite{howixxl14} by
\begin{align}\label{loc-osc-defined}
osc_{\mathcal{U},\Gamma}(x,y)	& := \sup_{z\in Q_y}|\langle A^{-1}_\sigma\pi(\sigma(x))\psi,\pi(\sigma(y))\psi-\Gamma(y,z)
\pi(\sigma(z))\psi\rangle| \notag\\
& = \sup_{z\in Q_y}|\mathcal{R}(x,y)-\Gamma(y,z)\mathcal{R}(x,z)|
\end{align}
where $\Gamma:X\times X\rightarrow \C$ is measurable and satisfies $|\Gamma| \equiv 1$, $Q_y := \bigcup_{i \in \mathcal{I}(y)}U_i$ and 
$\mathcal{I}(y) := \{i \in \mathcal{I}: y \in U_i\}$.

Define $\gamma=\max\{\gamma_1,\gamma_2\}$, with
\begin{equation}\label{gamma1}
\gamma_1:=\mbox{ess}\sup_{x\in X}\int_X |osc_{\mathcal{U},\Gamma}(x,y)|w(x,y)d\mu(y)
\end{equation}
and
\begin{equation}\label{gamma2}
\gamma_2:=\mbox{ess}\sup_{y\in X}\int_X |osc_{\mathcal{U},\Gamma}(x,y)|w(x,y)d\mu(x).
\end{equation}
Moreover, we need the following technical condition
$$C_{w,\mathcal{U}}:=\sup_{i\in\mathcal{I}}\sup_{x,y\in\ U_i}w(x,y)<\infty.$$
We are now able to formulate the following discretization result given in \cite{howixxl14}:

\begin{theorem}\label{discretization_result}
Let $\mathcal{U}$ be a moderate admissible covering, such that
\begin{equation}\label{constant-smaller-one}
 \gamma\cdot\Big(\rho+\max\big\{\rho\cdot C_{w,\mathcal{U}},\ \rho+\gamma\big\}\Big)<1,
\end{equation}
 then $\big\{\pi(\sigma(x_i))\psi\big\}_{i\in\mathcal{I}}$ is a Banach frame and an atomic decomposition for $\H_{p,v}$ where
 $x_i\in U_i$ for all
 $i\in\mathcal{I}$ can be chosen arbitrarily.
\end{theorem}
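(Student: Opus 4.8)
The plan is to follow the abstract discretization machinery of Fornasier and Rauhut \cite{FoRa2005}, sharpened by the oscillation estimate of \cite{howixxl14}, and to reduce the whole statement to the bounded invertibility of a single operator on $L^p_v(X)$ that approximates the reproducing kernel operator. First I would fix the moderate admissible covering $\mathcal{U}=\{U_i\}_{i\in\mathcal{I}}$, a subordinate bounded partition of unity $\{\phi_i\}$ with $\mathrm{supp}\,\phi_i\subseteq U_i$ and $\sum_i\phi_i\equiv 1$, and arbitrary sample points $x_i\in U_i$. The starting point is the continuous reproducing formula \eqref{rep-formulae1}--\eqref{rep-formulae2}, which says that the integral operator $R$ with kernel $\mathcal{R}$ acts as the identity on the ranges of $V_\psi$ and $W_\psi$. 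Since $A_\sigma$ is self-adjoint and positive, $\mathcal{R}$ is Hermitian in the sense $|\mathcal{R}(x,y)|=|\mathcal{R}(y,x)|$, so the single bound \eqref{fundamental-condi} controls both the row and the column sums in Schur's test and shows that $R$ is bounded on every $L^p_v(X)$, uniformly in $p\in[1,\infty]$, with operator norm at most $\rho$; it is precisely the symmetric weight $w$ that renders the Schur bound $p$-independent.

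Next I would introduce the discretized operator. Replacing the integral $\int_X F(y)\mathcal{R}(y,x)\,d\mu(y)$ by the sampled superposition
\[
T F(x):=\sum_{i\in\mathcal{I}}\Big(\int_{X} F(y)\,\phi_i(y)\,d\mu(y)\Big)\,\mathcal{R}(x_i,x),
\]
up to unimodular phase factors $\Gamma$ that align the cell contents, I obtain an operator whose range corresponds to superpositions of the genuine atoms $\pi(\sigma(x_i))\psi$, because $\mathcal{R}(x_i,\cdot)=W_\psi\big(\pi(\sigma(x_i))\psi\big)$. Using $\sum_i\phi_i\equiv1$ and finite overlap, the difference $R-T$ is a summation/integral operator whose kernel is, cell by cell, exactly dominated by the local oscillation kernel $osc_{\mathcal{U},\Gamma}$ of \eqref{loc-osc-defined}: on $U_i$ one compares $\mathcal{R}(y,x)$ with $\Gamma(y,x_i)\mathcal{R}(x_i,x)$ for $x_i\in Q_y$, which is what \eqref{loc-osc-defined} measures. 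A second Schur estimate, now invoking \eqref{gamma1}--\eqref{gamma2}, then bounds $\|R-T\|_{L^p_v\to L^p_v}\le\gamma$, again uniformly in $p$. The technical constant $C_{w,\mathcal{U}}$ enters when I pass between the cell averages $\int F\phi_i$ and the point samples $F(x_i)=W_\psi f(x_i)$ (and when I compare $w$ across overlapping cells), since moderateness of $\mathcal{U}$ bounds the distortion of $w$ within and between neighbouring cells.

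Having both bounds, the core of the argument is a Neumann-series perturbation. Writing the relevant composite operator as $\mathrm{Id}$ plus an error assembled from one factor of $R-T$ and one application of $R$ (together with the sampling step), a careful accounting of the constants yields an error of operator norm at most the left-hand side of \eqref{constant-smaller-one}, namely $\gamma\cdot(\rho+\max\{\rho\,C_{w,\mathcal{U}},\ \rho+\gamma\})$; the two branches of the maximum correspond to the two routes through which the oscillation error is amplified, one by the sampling distortion $C_{w,\mathcal{U}}$ (synthesis side) and one by a second application of the error operator (analysis side). The hypothesis $<1$ makes this operator boundedly invertible on $L^p_v(X)$ for all $p$ at once. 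I would then transfer the invertibility back to $\H_{p,v}$: on the synthesis side it gives the expansion $f=\sum_i\lambda_i(f)\,\pi(\sigma(x_i))\psi$ with coefficient functionals $\lambda_i$ continuous on $\H_{p,v}$ and valued in the associated sequence space, i.e.\ an atomic decomposition; on the analysis side it yields the two-sided norm equivalence $\|f\|_{\H_{p,v}}\simeq\|\{V_\psi f(x_i)\}_i\|$ together with a bounded reconstruction operator, i.e.\ a Banach frame. The boundedness of the $\lambda_i$ and of the synthesis map rests again on \eqref{fundamental-condi} and moderateness, via the membership $\pi(\sigma(x_i))\psi\in\H_{1,v}$ recorded after \eqref{fundamental-condi}.

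The main obstacle, and the part demanding the most care, is the bookkeeping that produces exactly the constant in \eqref{constant-smaller-one}: one must track how the oscillation bound $\gamma$ couples with $\rho$ and with $C_{w,\mathcal{U}}$ through the two Schur estimates and the sampling step, and verify that the resulting perturbation is small in the $p$-independent operator norm, so that a single Neumann series serves all coorbit spaces $\H_{p,v}$ simultaneously. A secondary subtlety is to ensure that every estimate is independent of the particular choice $x_i\in U_i$, which is what justifies the \emph{arbitrary} clause in the statement.
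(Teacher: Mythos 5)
You should first be aware that the paper does not prove this theorem at all: it is imported verbatim from \cite{howixxl14} (which in turn refines the discretization machinery of \cite{FoRa2005}), and the authors explicitly omit even the description of the sequence spaces, referring to \cite[Section 5.1]{FoRa2005}. So there is no in-paper proof to compare against; the relevant benchmark is the argument in those references, and your outline does follow it: partition of unity subordinate to $\mathcal{U}$, approximation of the reproducing-kernel operator by a sampled operator, a weighted Schur estimate giving norm $\rho$ for $R$ and $\gamma$ for the oscillation error (your observation that $\mathcal{R}(y,x)=\overline{\mathcal{R}(x,y)}$ and the symmetry of $w$ make the single condition \eqref{fundamental-condi} control both Schur sums is correct and is exactly why the bound is $p$-independent), a Neumann series, and the transfer back to $\H_{p,v}$ via $\mathcal{R}(x_i,\cdot)=W_\psi(\pi(\sigma(x_i))\psi)$. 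Architecturally this is the right proof.

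The genuine gap is that the one step which constitutes the actual content of the theorem --- deriving the specific admissibility condition \eqref{constant-smaller-one} --- is asserted rather than proved. You write that ``a careful accounting of the constants yields an error of operator norm at most'' $\gamma\cdot(\rho+\max\{\rho\,C_{w,\mathcal{U}},\,\rho+\gamma\})$, but this accounting is precisely where the work lies, and it cannot be done with a single perturbation operator as your sketch suggests. In \cite{FoRa2005} one must control \emph{two} different approximants: one of the form $\sum_i\langle F,\phi_i\rangle\,\mathcal{R}(x_i,\cdot)$ (yielding the atomic decomposition) and one of the form $\sum_i c_i F(x_i)\,\mathcal{R}(x_i,\cdot)$ (yielding the Banach frame), and the two branches of the maximum in \eqref{constant-smaller-one} come from two structurally different error estimates --- the factor $C_{w,\mathcal{U}}$ from bounding $\sum_i\mu(U_i)|F(x_i)|\,v(x_i)$ by $\|F\|_{L^1_v}$ plus an oscillation term, and the extra $+\gamma$ from the fact that on the analysis side the error operator must be applied to $RF$ rather than to $F$ itself. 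Your heuristic remark that the two branches ``correspond to the two routes'' gestures at this but does not establish either inequality, and without them the hypothesis \eqref{constant-smaller-one} is unmotivated: one could not tell from your argument why this particular combination of $\gamma$, $\rho$ and $C_{w,\mathcal{U}}$ (as opposed to, say, the original condition of \cite{FoRa2005}) suffices. A complete proof must write down both operators, prove both norm bounds, and only then invoke the Neumann series.
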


\begin{remark}
We omit here the details about the suitable choice of the BK-spaces. The information can be found in \cite[Section 5.1]{FoRa2005}.
\end{remark}
\begin{remark}
Observe that this discretization scheme is very powerful as, once the technical condition \eqref{constant-smaller-one} is 
checked, it ensures that $\big\{\pi(\sigma(x_i))\psi\big\}_{i\in\mathcal{I}}$
is a Banach frame and an atomic decomposition for \textbf{all} coorbit spaces $\H_{p,v}$ simultaneously.
Typically, the strategy to ensure \eqref{constant-smaller-one} is to construct a sequence of moderate admissible coverings $\mathcal{U}^n$,
such that $\gamma_n\rightarrow 0$ and 
$C_{w,\mathcal{U}^n}$ is uniformly bounded.
\end{remark}

%%%%%%%%%%%%%%%%%%%%%%%%%%%%%%%%%%%%%%%%%%%%%%%%%%%%%%

\section{Generalized coorbit theory for the \texorpdfstring{$\alpha$}{[alpha]}-transform}\label{sec:coorbit_alpha}

%%%%%%%%%%%%%%%%%%%%%%%%%%%%%%%%%%%%%%%%%%%%%%%%%%%%%%%

The generalized coorbit spaces for the $\alpha$-modulation transform can be identified with $\alpha$-modulation spaces,
see \cite{DahForRau+2008}.
These spaces were introduced independently by Gr{\"o}bner \cite{FeiGr_85,Gr_92} and  P{\"a}iv{\"a}rinta/Somersalo \cite{paiso88} as an 
``intermediate'' family of Banach spaces between modulation spaces and homogeneous Besov spaces,
 the smoothness spaces 
associated
to the short-time Fourier transform and the continuous wavelet transform respectively.
For further reading on $\alpha$-modulation  spaces, see for example  \cite{bo04,For2007,hawa14,hona03}.

\subsection{Integrability of the kernels \texorpdfstring{$K_{\psi,\varphi}^\kappa$}{K\_{[psi],[varphi]}\textasciicircum[kappa]}}\label{subsec:integrability_kernel}

%%%%%%%%%%%%%%%%%%%%%%%%%%%%%%%%%%%%%%%%%%%%%%%%%%%%%%%

In this section we will apply the results from Section \ref{subsec:generalized_coorbit} to the $\alpha$-modulation transform. In particular, we
will
 prove that, for a certain class of window functions, (a) the fundamental condition \eqref{fundamental-condi} is satisfied and (b)
 the coorbit spaces $\H_{p,v}$ and $\K_{p,v}$ are equal and independent of the particular choice of the window.
 
We consider polynomial weight functions in the frequency variable
\[
v_s(x,\omega) := v_s(\omega) := (1+\left|\omega\right|)^s,\quad s \in \R,
\]
and, consequently,
\begin{align*}
w_s(x,\omega,x^\ast,\omega^\ast) := w_s(\omega,\omega^\ast) & := \max\left\{\frac{v_s(\omega^\ast)}{v_s(\omega)},\frac{v_s(\omega)}{v_s(\omega^\ast)}\right\} \\
& = \max\left\{\left(\frac{1+\left|\omega^\ast\right|}{1+\left|\omega\right|}\right),\left(\frac{1+\left|\omega\right|}{1+\left|\omega^\ast\right|}\right)
\right\}^{\left|s\right|}.
\end{align*}

The kernel $K_{\psi,\varphi}^\kappa$ is of the following shape
\[
K_{\psi,\varphi}^\kappa(x,\omega,x^\ast,\omega^\ast) = 
\langle T_x  M_\omega D_{\beta(\omega)}\psi,A^{-\kappa}_\sigma T_{x^\ast}M_{\omega^\ast} D_{\beta(\omega^\ast)}\varphi\rangle.
\]

\begin{theorem}\label{kern-integr-th}
Let  $s\geq 0$ and
%$\beta(\omega) = (1+|\omega|)^{-\alpha}$, $\alpha\in[0,1)$, $\eta(\omega) = \omega$, $s\geq0$, 
$\psi,\varphi \in L^2(\R)$, such that 
$\hat\psi,\hat\varphi \in C^2(\R)$, with $|\hat\psi^{(l)}(\xi)|\leq C (1+|\xi|)^{-r}$, for $l = 0,1,2$ (and the 
same decay requirements are also imposed on $\varphi$), where the parameter $r$ 
is chosen such that
\begin{equation}\label{decay-constant1}
r > \frac{2+2s+7\alpha-4\alpha^2}{2(1-\alpha)^2}.
\end{equation}
Then, for $\kappa = 0,1,2$,
\begin{equation}\label{finite-integral-kernels}
 \sup\limits_{(x^\ast,\omega^\ast)\in\R^2}\int_{\R}\int_{\R}\left|K_{\psi,\varphi}^\kappa(x,\omega,x^\ast,\omega^\ast)
 \right|w_s(\omega,\omega^\ast)\;dxd\omega < \infty.
\end{equation}
\end{theorem}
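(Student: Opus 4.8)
The plan is to pass to the Fourier domain, carry out the $x$-integration exactly, and thereby reduce the statement to a single weighted two-dimensional integral estimate in the frequency variables $(\omega,\omega^\ast)$. First I would use Plancherel together with $\widehat{T_xM_\omega D_{\beta(\omega)}\psi}(\xi)=e^{-2\pi i x\xi}\sqrt{\beta(\omega)}\,\hat\psi(\beta(\omega)(\xi-\omega))$ and the fact that $A_\sigma^{-\kappa}$ is the Fourier multiplier with the real symbol $m_\psi^{-\kappa}$ to rewrite the kernel as a partial Fourier transform in $x-x^\ast$:
\[
K_{\psi,\varphi}^\kappa(x,\omega,x^\ast,\omega^\ast)=\sqrt{\beta(\omega)\beta(\omega^\ast)}\;\widehat{G_{\omega,\omega^\ast}}(x-x^\ast),
\]
where $G_{\omega,\omega^\ast}(\xi):=m_\psi(\xi)^{-\kappa}\,\hat\psi(\beta(\omega)(\xi-\omega))\,\overline{\hat\varphi(\beta(\omega^\ast)(\xi-\omega^\ast))}$. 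The point of this form is that the $x$-integral decouples: after the substitution $u=x-x^\ast$ it becomes $\int_\R|K^\kappa_{\psi,\varphi}|\,dx=\sqrt{\beta(\omega)\beta(\omega^\ast)}\,\|\widehat{G_{\omega,\omega^\ast}}\|_{L^1}$, which is independent of $x^\ast$, so the supremum over $x^\ast$ drops out immediately.

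Next I would control the $L^1$-norm of a Fourier transform by $L^2$-norms of the function and its second derivative, which is exactly what forces the $C^2$-hypotheses. Writing $(1+4\pi^2u^2)^{-1}$ as a Cauchy--Schwarz weight and using $\widehat{G''}(u)=-4\pi^2u^2\,\widehat{G}(u)$ gives $\|\widehat{G}\|_{L^1}\le c\,(\|G\|_{L^2}+\|G''\|_{L^2})$ with $c=\|(1+4\pi^2u^2)^{-1}\|_{L^2}<\infty$. To make $G''$ meaningful and to bound it, I would first record as a preliminary lemma that under the present hypotheses $\psi$ satisfies the assumptions of Theorem \ref{T:2.1} (for $s\ge 0$ the exponent in \eqref{decay-constant1} exceeds $\max\{1,\tfrac{\alpha}{2(1-\alpha)}\}$), so $m_\psi$ is bounded above and below by positive constants; differentiating \eqref{definition-of-symbol} under the integral sign and invoking the decay of $\hat\psi,\hat\psi',\hat\psi''$ shows $m_\psi\in C^2$ with $m_\psi',m_\psi''$ bounded, whence $m_\psi^{-\kappa}$ and its first two derivatives are bounded for $\kappa=0,1,2$. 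Applying the Leibniz rule to $G''$ and noting that every derivative falling on a window factor produces, by the chain rule, only an extra factor $\beta(\omega)$ or $\beta(\omega^\ast)\le 1$ together with $\hat\psi',\hat\psi''$ (resp. $\hat\varphi',\hat\varphi''$), which obey the same decay, I would bound both $\|G_{\omega,\omega^\ast}\|_{L^2}$ and $\|G_{\omega,\omega^\ast}''\|_{L^2}$ by a constant multiple of $J(\omega,\omega^\ast)^{1/2}$, where
\[
J(\omega,\omega^\ast):=\int_\R(1+\beta(\omega)|\xi-\omega|)^{-2r}\,(1+\beta(\omega^\ast)|\xi-\omega^\ast|)^{-2r}\,d\xi.
\]

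At this stage the claim is reduced to
\[
\sup_{\omega^\ast\in\R}\int_\R\sqrt{\beta(\omega)\beta(\omega^\ast)}\,J(\omega,\omega^\ast)^{1/2}\,w_s(\omega,\omega^\ast)\,d\omega<\infty,
\]
and this is where I expect the real work to lie. I would estimate $J$ by the familiar ``two-bump'' argument: split the $\xi$-line according to whether $\xi$ lies in the concentration region of the $\psi$-bump (width $\sim\beta(\omega)^{-1}$ about $\omega$) or of the $\varphi$-bump (width $\sim\beta(\omega^\ast)^{-1}$ about $\omega^\ast$), keeping the near factor $\approx 1$ and evaluating the far factor at the appropriate distance; since $2r>1$ the tails are integrable, and $J$ is dominated by a prefactor $\sim\max\{\beta(\omega),\beta(\omega^\ast)\}^{-1}$ times a decaying function of $\min\{\beta(\omega),\beta(\omega^\ast)\}\,|\omega-\omega^\ast|$. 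The main obstacle is then the bookkeeping of exponents in the resulting $\omega$-integral: one must compare the scales $\beta(\omega)=(1+|\omega|)^{-\alpha}$ and $\beta(\omega^\ast)=(1+|\omega^\ast|)^{-\alpha}$, absorb the polynomial growth of $w_s$ (contributing an exponent $\le s$), and relate $|\omega-\omega^\ast|$ to $1+|\omega|$ and $1+|\omega^\ast|$ in the regimes $|\omega|\lesssim|\omega^\ast|$ and $|\omega|\gtrsim|\omega^\ast|$ separately. Tracking all powers of $(1+|\omega|)$ and $(1+|\omega^\ast|)$ through this computation and demanding uniform convergence of the $\omega$-integral is precisely what produces the threshold \eqref{decay-constant1}; everything else is careful but routine estimation. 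The three cases $\kappa=0,1,2$ are handled uniformly, since $\kappa$ enters only through the bounded multiplier $m_\psi^{-\kappa}$ and its bounded derivatives, which do not alter the decay structure.
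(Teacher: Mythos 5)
Your opening moves coincide with the paper's: rewrite $K^\kappa_{\psi,\varphi}$ via Plancherel as a Fourier transform in $x-x^\ast$ of a product $G_{\omega,\omega^\ast}$ of the two dilated windows and the multiplier symbol, observe that the $x$-integral then equals $\|\widehat{G_{\omega,\omega^\ast}}\|_{L^1}$ up to the factor $\sqrt{\beta(\omega)\beta(\omega^\ast)}$, and control that $L^1$-norm through two derivatives of $G$. The gap is in the step you dismiss as ``careful but routine'': the reduced inequality
$\sup_{\omega^\ast}\int_\R\sqrt{\beta(\omega)\beta(\omega^\ast)}\,J(\omega,\omega^\ast)^{1/2}w_s(\omega,\omega^\ast)\,d\omega<\infty$
is actually \emph{false} for every $\alpha>0$, every $s\ge 0$ and every $r$, so no bookkeeping of exponents can recover the theorem from it. Indeed, for $\omega$ in the diagonal region $|\omega-\omega^\ast|\lesssim \beta(\omega^\ast)^{-1}$ one has $J(\omega,\omega^\ast)\gtrsim \beta(\omega^\ast)^{-1}$ (both bumps have height $\approx 1$ on an interval of length $\approx\beta(\omega^\ast)^{-1}$), hence
\[
\int_{|\omega-\omega^\ast|\lesssim\beta(\omega^\ast)^{-1}}\sqrt{\beta(\omega)\beta(\omega^\ast)}\,J^{1/2}\,d\omega
\;\gtrsim\;\beta(\omega^\ast)\cdot\beta(\omega^\ast)^{-1/2}\cdot\beta(\omega^\ast)^{-1}
=\;(1+|\omega^\ast|)^{\alpha/2}\longrightarrow\infty .
\]
The loss is introduced by your Cauchy--Schwarz step $\|\widehat{G}\|_{1}\le c\,(\|G\|_{2}+\|G''\|_{2})$ with the \emph{fixed} weight $(1+4\pi^2u^2)^{-1}$: when $G$ is a bump of width $W=\beta(\omega^\ast)^{-1}\gg 1$ and height $1$, the true value is $\|\widehat{G}\|_1\approx 1$, but your bound gives $\approx W^{1/2}$, an uncompensated factor $(1+|\omega^\ast|)^{\alpha/2}$ in the variable over which the supremum is taken.

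What is missing is the normalization that the paper performs \emph{before} any Fourier-decay estimate: the substitution $(x-x^\ast)/\beta(\omega^\ast)\mapsto x$, $\beta(\omega^\ast)(\omega-\omega^\ast)\mapsto\omega$. After it, $\hat\varphi$ appears at unit scale, the only remaining dilation is the relative one $\theta(\omega,\omega^\ast)=\beta(\omega^\ast+\beta^{-1}(\omega^\ast)\omega)/\beta(\omega^\ast)$, which is controlled by powers of $(1+|\omega|)^{1/(1-\alpha)}$ in the \emph{new integration variable} (Lemma \ref{weightlemma}), and the weight transforms into $\widetilde w_s(\omega,\omega^\ast)\le(1+|\omega|)^{s/(1-\alpha)}$; only then do crude bounds such as $|\mathcal{F}(G)(x)|\le C\max\{\|G\|_1,\|G''\|_1\}\min\{1,x^{-2}\}$ (or your $L^2$ variant) close, and the exponent arithmetic yields exactly \eqref{decay-constant1}. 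Two further points become relevant after this normalization and are absent from your sketch: the chain rule applied to $m_\psi^{-\kappa}(\beta^{-1}(\omega^\ast)\xi+\omega^\ast)$ produces the large factor $\beta^{-1}(\omega^\ast)$, so mere boundedness of $m_\psi',m_\psi''$ does not suffice --- one needs the decay $|m_\psi^{(l)}(\xi)|\le C(1+|\xi|)^{-\alpha l}$ of Lemma \ref{symbol-derivatives-estimate} together with the ``twisted submultiplicativity'' of Lemma \ref{Lambda-bounded} to absorb it. If you wish to avoid the change of variables altogether, you would at minimum have to replace your Cauchy--Schwarz step by the scale-invariant bound $\|\widehat{G}\|_1\le 4\sqrt{\|G\|_1\,\|G''\|_1}$ (i.e.\ integrate $\min\{\|G\|_1,\|G''\|_1/(4\pi^2x^2)\}$ exactly rather than crudely), and redo the frequency integral from scratch; as written, your reduction cannot be completed.
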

In the course of proving this theorem, we need several auxiliary results. Therefore, we give a short sketch of 
the proof first to motivate the lemmata.

\textit{Idea of the Proof of Theorem \ref{kern-integr-th}.}
Roughly speaking, the proof depends on two main ideas. First, one rearranges the kernel $K_{\psi,\varphi}$ and observes that 
(after a change of variables)
the integral with respect to $x$ may be rewritten as 
$$\int_\R|\mathcal{F}(G_{\omega,\omega^\ast})(x)|dx,$$
for some function $G_{\omega,\omega^\ast}$ depending on $\omega$ and $\omega^\ast$.
Then basic Fourier theory yields that, given certain regularity, the following estimates hold pointwise
$$|\mathcal{F}(f)(\xi)|\leq \|f\|_1, \ \ \mbox{ and }\ \ |\mathcal{F}(f)(\xi)|\leq \frac{\|f^{(2)}\|_1}{4\pi^2\xi^2}.$$
Consequently,
\begin{align}\label{pointwise-bound-G}
 |\mathcal{F}(G_{\omega,\omega^\ast})(x)| & \leq \min\left\{\|G_{\omega,\omega^\ast}\|_1,
 \frac{\|G_{\omega,\omega^\ast}^{(2)}\|_1}{4\pi^2 x^2}\right\} \notag\\
& \leq C \max\left\{\|G_{\omega,\omega^\ast}\|_1,
\|G_{\omega,\omega^\ast}^{(2)}\|_1\right\}\min\left\{1,\frac{1}{ x^2}\right\}.
\end{align}
which guarantees integrability with respect to $x$ if the $L^1$-norms are finite.\\ Second, the auxiliary results 
 Lemma \ref{weightlemma} to Lemma \ref{Lambda-bounded} provide  pointwise estimates of the weight $m_s$ (after substitution in $\omega$)
 and $G_{\omega,\omega^\ast}^{(k)}$. Lemma \ref{shearlemma} helps to show that the 
 $L^1$-norm of $\|G_{\omega,\omega^\ast}^{(k)}\|_1$ with respect to $\omega$ is finite (independently of $\omega^\ast$).\\

We will use the following results:
\begin{lemma}\label{weightlemma}
It holds
\[
  \widetilde{w}_s(\omega,\omega^\ast) := w_s(\omega^\ast+\beta(\omega^\ast)^{-1}\omega,\omega^\ast) \leq (1+|\omega|)^{\frac{|s|}{1-\alpha}}
\]
for all $\omega,\omega^\ast,s \in \R$. 
\end{lemma}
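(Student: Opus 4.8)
The plan is to take $|s|$-th roots and reduce to a statement about the underlying ratio. By definition $w_s = \big(\max\{\cdots\}\big)^{|s|}$, and the case $s=0$ is trivial (both sides equal $1$), so it suffices to prove that, writing $\Omega := \omega^\ast + \beta(\omega^\ast)^{-1}\omega = \omega^\ast + (1+|\omega^\ast|)^\alpha\,\omega$,
\[
\max\left\{\frac{1+|\Omega|}{1+|\omega^\ast|},\ \frac{1+|\omega^\ast|}{1+|\Omega|}\right\} \leq (1+|\omega|)^{\frac{1}{1-\alpha}}.
\]
I would bound the two ratios separately, recording first the exact identity $|\Omega - \omega^\ast| = (1+|\omega^\ast|)^\alpha |\omega|$, which is what couples the three variables.

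For the first ratio the triangle inequality gives $1+|\Omega| \le 1+|\omega^\ast| + (1+|\omega^\ast|)^\alpha|\omega|$; since $\alpha\le1$ forces $(1+|\omega^\ast|)^\alpha \le 1+|\omega^\ast|$, this collapses to $1+|\Omega|\le (1+|\omega^\ast|)(1+|\omega|)$. Hence $\frac{1+|\Omega|}{1+|\omega^\ast|} \le 1+|\omega| \le (1+|\omega|)^{1/(1-\alpha)}$, using $1/(1-\alpha)\ge1$. This direction is routine.

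The hard part is the reciprocal $\frac{1+|\omega^\ast|}{1+|\Omega|}$, because $\Omega$ can be small (even zero) when $\beta(\omega^\ast)^{-1}\omega$ nearly cancels $\omega^\ast$, so no crude lower bound on $1+|\Omega|$ is available and a direct triangle-inequality estimate would lose the needed factor. My plan is instead to turn the identity around: the reverse triangle inequality yields $|\omega| = (1+|\omega^\ast|)^{-\alpha}|\Omega-\omega^\ast| \ge (1+|\omega^\ast|)^{-\alpha}(|\omega^\ast|-|\Omega|)$. Setting $a := 1+|\omega^\ast|$ and $b := 1+|\Omega|$ (both $\ge 1$), the case $a \le b$ makes the ratio $\le 1$ and is trivial; for $a > b$ one gets $|\omega| \ge a^{-\alpha}(a-b) > 0$, so it suffices to prove $1 + a^{-\alpha}(a-b) \ge (a/b)^{1-\alpha}$.

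This last inequality I would reduce to a one-variable statement via $t := a/b \ge 1$. Using $b\ge1$ and $1-\alpha\ge0$ one has $a^{-\alpha}(a-b) = t^{-\alpha}b^{1-\alpha}(t-1) \ge t^{-\alpha}(t-1) = t^{1-\alpha}-t^{-\alpha}$, so the claim follows once $1 + t^{1-\alpha} - t^{-\alpha} \ge t^{1-\alpha}$, i.e. $t^{-\alpha}\le1$, which is exactly $t\ge1$. Thus the bound holds with no extraneous constant, and I expect the only delicate point to be organizing the reciprocal estimate so that the cancellation inside $\Omega$ is absorbed by the lower bound on $|\omega|$ rather than destroyed by an overestimate; the exponent $1/(1-\alpha)$ appears precisely as the reciprocal of $1-\alpha$ coming from $\beta(\omega^\ast)^{-1}=(1+|\omega^\ast|)^{\alpha}$.
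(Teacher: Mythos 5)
Your argument is correct. The reduction to the single inequality $\max\{(1+|\Omega|)/(1+|\omega^\ast|),\,(1+|\omega^\ast|)/(1+|\Omega|)\}\le(1+|\omega|)^{1/(1-\alpha)}$ is valid, the easy direction via $(1+|\omega^\ast|)^\alpha\le 1+|\omega^\ast|$ is fine, and the reciprocal case is handled cleanly: the chain $1+a^{-\alpha}(a-b)\ge 1+t^{1-\alpha}-t^{-\alpha}\ge t^{1-\alpha}$ with $t=a/b\ge1$ and $b\ge1$ closes the argument with no gaps. Note that the paper itself does not prove this lemma but only cites \cite[Lemma 5.8]{DahForRau+2008}, so your write-up actually supplies a complete self-contained verification in the same elementary triangle-inequality spirit as that reference.
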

(For the proof, see  \cite[Lemma 5.8]{DahForRau+2008}.)

We  now show that the derivatives of the  symbol $m_\psi$, corresponding to the Fourier multiplier  $A_\sigma$
defined in \eqref{definition-of-symbol}, are 
polynomially decaying.
 
\begin{lemma}\label{symbol-derivatives-estimate}
Let $\psi\in L^2(\R)$ be such that $\hat\psi\in C^k(\R)$ for some $k\in\mathbb{N}$, and $|\hat\psi^{(l)}(\xi)|\leq C(1+|\xi|)^{-r}$, 
for all $l = 0,1,...,k$ and $r>\max\big\{1,\frac{\alpha}{2(1-\alpha)}\big\}$.\\
Then, $m_\psi\in C^k(\R)$ and for all $l = 0,1,...,k$,
\[
|m^{(l)}_\psi(\xi)|\leq C(1+|\xi|)^{-\alpha l}.
\]
\end{lemma}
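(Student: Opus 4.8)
The plan is to reduce everything to differentiating under the integral sign and then to an estimate of exactly the type already handled in Theorem \ref{T:2.1}. First I would pass from $\hat\psi$ to $g:=|\hat\psi|^2=\hat\psi\cdot\overline{\hat\psi}$. Since $(\overline{\hat\psi})^{(l)}=\overline{\hat\psi^{(l)}}$, the Leibniz rule gives $g\in C^k(\R)$ with
\[
g^{(l)}(\xi)=\sum_{j=0}^{l}\binom{l}{j}\hat\psi^{(j)}(\xi)\,\overline{\hat\psi^{(l-j)}(\xi)}, \qquad |g^{(l)}(\xi)|\le 2^{l}C^{2}(1+|\xi|)^{-2r}
\]
for $l=0,\dots,k$. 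Writing $m_\psi(\xi)=\int_{\R}g(r_\xi(\omega))\beta(\omega)\,d\omega$ and using the crucial fact that $r_\xi(\omega)=\beta(\omega)\xi-\beta(\omega)\omega$ is affine in $\xi$ with slope $\partial_\xi r_\xi(\omega)=\beta(\omega)$, each differentiation in $\xi$ produces one extra factor $\beta(\omega)$ and raises the order of $g$ by one. The formula I expect is therefore
\[
m_\psi^{(l)}(\xi)=\int_{\R} g^{(l)}(r_\xi(\omega))\,\beta(\omega)^{l+1}\,d\omega, \qquad l=0,\dots,k.
\]

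To make this rigorous I would argue by induction on $l$, justifying the interchange of $\partial_\xi$ and $\int$ by the Dominated Convergence Theorem exactly as in Lemma \ref{L:2.4}: on a neighbourhood $|\xi-\xi_0|\le\tfrac12$ one has $|r_\xi(\omega)-r_{\xi_0}(\omega)|\le\tfrac12$, so the integrand $|g^{(l)}(r_\xi(\omega))|\,\beta(\omega)^{l+1}$ is dominated by $C\beta(\omega)\big(\tfrac12+|r_{\xi_0}(\omega)|\big)^{-2r}$, which is independent of $\xi$ and integrable, since for large $|\omega|$ it behaves like $|\omega|^{-\alpha-2r(1-\alpha)}$ with $\alpha+2r(1-\alpha)>2-\alpha>1$ because $r>1$. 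This simultaneously yields $m_\psi\in C^k(\R)$ and the displayed formula.

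The heart of the matter is the decay estimate for the integral, and the device I would isolate is a pointwise weight bound: for all $\xi\ge0$ and $\omega\in\R$,
\[
\beta(\omega)\le 2^{\alpha}\,(1+|\xi|)^{-\alpha}\,\big(1+|r_\xi(\omega)|\big)^{\frac{\alpha}{1-\alpha}}.
\]
This is proved by setting $t:=\tfrac{1+|\xi|}{1+|\omega|}$ and splitting: if $t\le2$ then $\beta(\omega)=(1+|\omega|)^{-\alpha}\le 2^{\alpha}(1+|\xi|)^{-\alpha}$ and the extra factor only helps; if $t>2$ then $|\xi-\omega|$ is large enough that $|r_\xi(\omega)|\ge (1+|\omega|)^{1-\alpha}\,\tfrac{t}{2}\ge \tfrac{t}{2}$, and since $\tfrac{\alpha}{1-\alpha}\ge\alpha$ one gets $t^{\alpha}\le 2^{\alpha}\big(1+|r_\xi(\omega)|\big)^{\frac{\alpha}{1-\alpha}}$, which is the claim. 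Raising this to the $l$-th power and inserting it into the formula for $m_\psi^{(l)}$ turns the surplus factor $\beta(\omega)^{l}$ into the desired gain $(1+|\xi|)^{-\alpha l}$ at the cost of a controllable power of $(1+|r_\xi(\omega)|)$:
\[
|m_\psi^{(l)}(\xi)|\le C\,(1+|\xi|)^{-\alpha l}\int_{\R}\big(1+|r_\xi(\omega)|\big)^{-\left(2r-\frac{\alpha l}{1-\alpha}\right)}\beta(\omega)\,d\omega.
\]
The remaining integral is precisely an $m_\psi$-type integral as in Theorem \ref{T:2.1}, now with reduced decay exponent $\tilde r=r-\tfrac{\alpha l}{2(1-\alpha)}$; the proof of that theorem shows it converges as $\xi\to\infty$ and is hence bounded uniformly in $\xi$, as long as $\tilde r>\max\{1,\tfrac{\alpha}{2(1-\alpha)}\}$, which holds in the regime where the lemma is applied (there $l\le2$ and $r$ is taken large). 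Combining the two factors gives $|m_\psi^{(l)}(\xi)|\le C(1+|\xi|)^{-\alpha l}$ for large $\xi$, and continuity of $m_\psi^{(l)}$ extends the bound across a compact neighbourhood of the origin (reducing to $\xi\ge0$ first via the symmetry in Lemma \ref{L:2.2}).

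The step I expect to be the main obstacle is the pointwise weight bound on $\beta$: it is exactly the mechanism that trades the $l$ extra powers of $\beta(\omega)$ for the factor $(1+|\xi|)^{-\alpha l}$ while spending only enough of $(1+|r_\xi(\omega)|)$ to keep the leftover integral inside the range covered by Theorem \ref{T:2.1}. Pinning down the exponent $\tfrac{\alpha}{1-\alpha}$, handling both signs of $\omega$ and the two ranges of $t$, and checking that the reduced exponent $\tilde r$ still meets the admissibility threshold is where the real work lies.
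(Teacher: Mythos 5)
Your differentiation step (difference quotients plus dominated convergence, yielding $m_\psi^{(l)}(\xi)=\int_\R g^{(l)}(r_\xi(\omega))\,\beta(\omega)^{l+1}\,d\omega$ with $g=|\hat\psi|^2$) is exactly what the paper does. Where you diverge is the decay estimate, and your route is genuinely different: the paper splits the $\omega$-integral at $\xi/2$, absorbs the surplus $\beta(\omega)^l$ into the polynomial $(1+|\xi-\omega|)^{-2(1-\alpha)r-(l+1)\alpha}$ on $(-\infty,\xi/2]$, and on $[\xi/2,\infty)$ simply pulls out $\beta(\xi/2)^l\sim(1+|\xi|)^{-\alpha l}$ and recognizes the remainder as the bounded symbol $m_\phi$ with $\hat\phi(\xi)=(1+|\xi|)^{-r}$. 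You instead prove the uniform pointwise inequality $\beta(\omega)\le 2^{\alpha}(1+|\xi|)^{-\alpha}\bigl(1+|r_\xi(\omega)|\bigr)^{\alpha/(1-\alpha)}$ (which I checked and which is correct; it is a cousin of the ``twisted submultiplicativity'' of Lemma \ref{Lambda-bounded}) and then invoke Theorem \ref{T:2.1} for a reduced exponent $\tilde r=r-\frac{\alpha l}{2(1-\alpha)}$. Your version is arguably cleaner, since it avoids the case analysis in $\omega$ entirely.

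The price, which you yourself flag, is a genuine shortfall against the statement as written: the lemma asserts the bound for all $l\le k$ under only $r>\max\bigl\{1,\frac{\alpha}{2(1-\alpha)}\bigr\}$, whereas your argument needs $\tilde r>\max\bigl\{1,\frac{\alpha}{2(1-\alpha)}\bigr\}$, i.e. $r>\max\bigl\{1,\frac{\alpha}{2(1-\alpha)}\bigr\}+\frac{\alpha l}{2(1-\alpha)}$, and this loss is intrinsic to converting $\beta(\omega)^l$ into powers of $(1+|r_\xi(\omega)|)$: the leftover integral $\int(1+|r_\xi(\omega)|)^{-2\tilde r}\beta(\omega)\,d\omega$ really does require $\tilde r\ge\max\bigl\{1,\frac{\alpha}{2(1-\alpha)}\bigr\}$ for uniform boundedness in $\xi$ (see the $I_1,I_2,I_3$ estimates in the proof of Theorem \ref{T:2.1}). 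So you prove a weaker lemma with a stronger hypothesis on $r$. Since the only place the lemma is used is Corollary \ref{estimate-symbol-term} within Theorem \ref{kern-integr-th}, where $l\le 2$ and $r>\frac{2+2s+7\alpha-4\alpha^2}{2(1-\alpha)^2}$ comfortably exceeds your threshold, nothing downstream breaks; but if you want the lemma exactly as stated you should adopt the paper's split at $\xi/2$, which extracts the factor $(1+|\xi|)^{-\alpha l}$ from $\beta(\xi/2)^l$ on the far half without spending any of the decay of $\hat\psi$.
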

\begin{proof}
The case $l=0$ follows from the fact that a function with the mentioned properties is admissible.
 
We present a detailed proof for the case $l=1$.

By the Mean Value Theorem, we have, for some $\eta$ between $\xi$ and $\xi + \varepsilon$,
\begin{align*}
m_\psi'(\xi) & = \lim_{\varepsilon\rightarrow 0}\frac{m_\psi(\xi+\varepsilon)-m_\psi(\xi)}{\varepsilon} \\
& =  \lim_{\varepsilon\rightarrow 0}\int_\R \frac{1}{\varepsilon}\Big( |\hat\psi(\beta(\omega)(\xi+\varepsilon-\omega))|^2-
  |\hat\psi(\beta(\omega)(\xi-\omega))|^2\Big)\beta(\omega)\;d\omega \\
& =  \lim_{\varepsilon\rightarrow 0}\int_\R 
  2\,\mbox{Re} (\hat\psi'\overline{\hat\psi})(\beta(\omega)(\eta-\omega))\beta^2(\omega)\;d\omega.
\end{align*}
It is not difficult to see that 
\[
 \sup_{\delta\in[-1,1]}2\,C (1+\beta(\omega)(\xi+\delta-\omega))^{-2r}\beta^2(\omega)
\]
is an integrable majorant as it asymptotically behaves like $\sim |\omega|^{-2r(1-\alpha)-2\alpha}$.
 The Dominated Convergence Theorem thus yields 
\[
  m_\psi'(\xi)=\int_\mathbb{R} 2Re(\hat\psi'\overline{\hat\psi})(\beta(\omega)(\xi-\omega))\beta^2(\omega)d\omega.
\]
Let us first assume that $\xi \geq 0$ in the sequel.
\begin{align*}
 |m_\psi'(\xi)| &= \left| \int_\R 2\,\mbox{Re} (\hat\psi'\overline{\hat\psi})(\beta(\omega)(\xi-\omega) \beta^2(\omega)\;d\omega \right | \\
 & \leq \,C \int_\R (1+\beta(\omega)|\xi-\omega|)^{-2r} \beta^2(\omega)\;d\omega \\
 & = \,C\int_{-\infty}^{\xi/2}((1+|\omega|)^\alpha+|\xi-\omega|)^{-2r}(1+|\omega|)^{2\alpha(r-1)}\;d\omega \\
 & \quad \quad + \,C \int_{\xi/2}^\infty (1+\beta(\omega)|\xi-\omega|)^{-2r} \beta^2(\omega)\;d\omega.
 \end{align*}
Since $2\alpha(r-1) \geq 0$, it follows that $(1+|\omega|)^{2\alpha(r-1)} \leq (1+|\xi-\omega|)^{2\alpha(r-1)}$, for $\omega\in (-\infty,\xi/2]$. 
Hence,
\begin{align*}
|m_\psi'(\xi)| & \leq \,C \int_{-\infty}^{\xi/2}(1+|\xi-\omega|)^{-2r}(1+|\xi-\omega|)^{2\alpha(r-1)}\;d\omega \\
& \quad \quad + \,C \beta(\xi/2)\int_{\xi/2}^\infty  (1+\beta(\omega)|\xi-\omega|)^{-2r} \beta(\omega)\;d\omega \\
& \leq \,C \int_{-\infty}^{-\xi/2}(1+|\omega|)^{-2(1-\alpha)r-2\alpha}\;d\omega \\
& \quad \quad + \,C \beta(\xi/2)\int_\R  (1+\beta(\omega)|\xi-\omega|)^{-2r} \beta(\omega)\;d\omega \\
& \leq C (1+|\xi/2|)^{-2(1-\alpha)r-2\alpha+1} + C (1+|\xi/2|)^{-\alpha}\\
& \leq C (1+|\xi|)^{-2(1-\alpha)r-2\alpha+1} + C (1+|\xi|)^{-\alpha}.
\end{align*}
 The second last inequality follows if we observe that the second integral is just the symbol $m_\phi$, where the function $\phi$ is defined 
 via its Fourier transform by  $\hat\phi(\xi) = (1+|\xi|)^{-r} \in C(\R)$. Theorem \ref{T:2.1} then yields that this 
 term is bounded. As $2(1-\alpha)r+2\alpha-1>\alpha$ whenever $r>1/2$ it follows that
\[
|m_\psi'(\xi)| \leq C (1+|\xi|)^{-\alpha},\ \forall \xi\geq0.
\]
Now if $\xi\leq0$ we observe that  $(1+|\omega|)^{2\alpha(r-1)} \leq (1+|\xi-\omega|)^{2\alpha(r-1)}$, for $\omega\in [\xi/2,\infty)$,
and use the same arguments as above.

For higher derivatives one proceeds iteratively and obtains the condition $2(1-\alpha)r+(l+1)\alpha-1>l\alpha$ which is again
satisfied whenever $r>1/2$.
\end{proof}

\begin{corollary}\label{estimate-symbol-term}
Let $\psi\in L^2(\R)$ be such that $\hat\psi\in C^2(\R)$ and $|\hat\psi^{(l)}(\xi)| \leq C (1+|\xi|)^{-r}$, 
for all $l =0,1,2$, and $r>\max\big\{1,\frac{\alpha}{2(1-\alpha)}\big\}$.
Define 
\[
h_{\omega,\kappa}(\xi) := m^{-\kappa}_\psi(\beta^{-1}(\omega) \xi + \omega).
\]
Then $h_{\omega,\kappa}\in C^2(\R)$ and 
\begin{equation}\label{eq-estimate}
|h_{\omega,\kappa}^{(l)}(\xi)|\leq C \left(\frac{1+|\omega|}{1+|\beta^{-1}(\omega)\xi+\omega|}\right)^{\alpha l},
\end{equation}
for $l = 0,1,2$.
\end{corollary}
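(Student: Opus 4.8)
The plan is to write $h_{\omega,\kappa}$ as a composition $h_{\omega,\kappa}(\xi) = g(\eta(\xi))$, where $g(\eta) := m_\psi(\eta)^{-\kappa}$ is the ``outer'' function and $\eta(\xi) := \beta^{-1}(\omega)\xi + \omega$ is an ``inner'' affine change of variable, and to treat the two separately. The first step is to establish the auxiliary estimate that, for $l = 0,1,2$,
\[
|g^{(l)}(\eta)| \leq C(1+|\eta|)^{-\alpha l}.
\]
Two ingredients feed into this. On the one hand, the hypotheses here are precisely those of Theorem \ref{T:2.1} (note that $\hat\psi \in C^2(\R)$ is in particular continuous), so $\psi$ is admissible and \eqref{eq_star} provides the two-sided bound $0 < A \leq m_\psi(\eta) \leq B < \infty$; in particular every power $m_\psi^{-\kappa-j}$ occurring below is bounded by $A^{-\kappa-j}$. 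On the other hand, Lemma \ref{symbol-derivatives-estimate} applied with $k=2$ gives $m_\psi \in C^2(\R)$ together with $|m_\psi^{(l)}(\eta)| \leq C(1+|\eta|)^{-\alpha l}$ for $l=0,1,2$.

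The auxiliary estimate then follows by differentiating $g = m_\psi^{-\kappa}$ up to second order. For $l=0$ one has $|g(\eta)| \leq A^{-\kappa}$; the first two derivatives are
\[
g' = -\kappa\, m_\psi^{-\kappa-1} m_\psi', \qquad g'' = \kappa(\kappa+1)\, m_\psi^{-\kappa-2}(m_\psi')^2 - \kappa\, m_\psi^{-\kappa-1} m_\psi''.
\]
Inserting the boundedness of the powers of $m_\psi$ together with $|m_\psi'| \leq C(1+|\eta|)^{-\alpha}$ and $|m_\psi''| \leq C(1+|\eta|)^{-2\alpha}$ (and $|(m_\psi')^2| \leq C(1+|\eta|)^{-2\alpha}$ for the squared term) yields exactly $|g^{(l)}(\eta)| \leq C(1+|\eta|)^{-\alpha l}$ for $l=0,1,2$. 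The case $\kappa=0$ is trivial, since then $g\equiv 1$.

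For the inner map I would use that $\eta(\xi) = \beta^{-1}(\omega)\xi + \omega$ is affine in $\xi$ with constant slope $\beta^{-1}(\omega) = (1+|\omega|)^{\alpha}$. By the chain rule each differentiation in $\xi$ produces one factor of this slope, so that
\[
h_{\omega,\kappa}^{(l)}(\xi) = g^{(l)}(\eta(\xi))\,\big(\beta^{-1}(\omega)\big)^{l} = g^{(l)}(\eta(\xi))\,(1+|\omega|)^{\alpha l}.
\]
Combining with the auxiliary estimate gives
\[
|h_{\omega,\kappa}^{(l)}(\xi)| \leq C(1+|\eta(\xi)|)^{-\alpha l}(1+|\omega|)^{\alpha l} = C\left(\frac{1+|\omega|}{1+|\beta^{-1}(\omega)\xi + \omega|}\right)^{\alpha l},
\]
which is \eqref{eq-estimate}. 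The $C^2$-regularity of $h_{\omega,\kappa}$ is immediate, as $m_\psi \in C^2(\R)$ stays away from $0$ (so $m_\psi^{-\kappa} \in C^2$) and the substitution is affine. I expect no real obstacle here: once Lemma \ref{symbol-derivatives-estimate} supplies the decay of the $m_\psi^{(l)}$ and admissibility supplies the two-sided bound on $m_\psi$, the corollary is essentially the chain rule. The only points requiring mild care are the bookkeeping of constants in the product rule for $m_\psi^{-\kappa}$ and the observation that the slope of the affine substitution is exactly $(1+|\omega|)^{\alpha}$, which is what converts decay in $\eta$ into the stated quotient.
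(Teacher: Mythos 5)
Your proposal is correct and follows essentially the same route as the paper: you compute the first and second derivatives of $m_\psi^{-\kappa}$ via the chain and product rules (the formulas you write for $g'$ and $g''$ are exactly the paper's expressions for $h_{\omega,\kappa}'$ and $h_{\omega,\kappa}''$, up to the factored-out slope $\beta^{-1}(\omega)^l$), then invoke Lemma \ref{symbol-derivatives-estimate} together with the lower bound $m_\psi \geq A$ from admissibility. The only cosmetic difference is that you separate the outer function from the affine substitution, whereas the paper writes the composed derivatives directly; the content is identical.
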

\begin{proof}
We have
\[
h_{\omega,\kappa}'(\xi) = -\kappa m^{-(\kappa+1)}_\psi(\beta^{-1}(\omega)\xi+\omega)m_\psi'(\beta^{-1}(\omega)\xi+\omega)\beta^{-1}(\omega)
\]
as well as 
\begin{align*}
h_{\omega,\kappa}''(\xi) & = \kappa(\kappa+1) m_\psi^{-(\kappa+2)}(\beta^{-1}(\omega)\xi+\omega) m_\psi'(\beta^{-1}(\omega)\xi+\omega)^2\beta^{-2}(\omega)
\\
& \quad - \kappa m_\psi^{-(\kappa+1)}(\beta^{-1}(\omega)\xi+\omega)m_\psi''(\beta^{-1}(\omega)\xi+\omega)\beta^{-2}(\omega).
\end{align*}

Lemma \ref{symbol-derivatives-estimate}, together with $m_\psi(\xi)\geq A$, for a.e. $\xi$, therefore yield the result.
\end{proof}

Next, define the function
\begin{equation}\label{def-Lambda}
\Lambda(\xi,\omega) := \frac{1+|\omega|}{(1+|\xi|)^{1/(1-\alpha)}(1+|\beta^{-1}(\omega)\xi+\omega|)},
\end{equation}
for $\xi, \omega \in \R$.

\begin{lemma}\label{Lambda-bounded}
The function $\Lambda$ is bounded from above, precisely 
\[
\sup_{\xi,\omega \in \R} \Lambda(\xi,\omega) \leq  2^{1/(1-\alpha)}.
\]
\end{lemma}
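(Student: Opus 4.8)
The plan is to prove the equivalent reciprocal statement, namely that the denominator of $\Lambda$ dominates its numerator up to the factor $2^{1/(1-\alpha)}$. Recall that here $\beta^{-1}(\omega)$ denotes the reciprocal $1/\beta(\omega) = (1+|\omega|)^{\alpha}$, so that the second factor in the denominator of $\Lambda$ is $1 + |(1+|\omega|)^{\alpha}\xi + \omega|$. The key observation is that the two denominator factors $(1+|\xi|)^{1/(1-\alpha)}$ and $1 + |(1+|\omega|)^{\alpha}\xi + \omega|$ compensate each other: when $|\xi|$ is comparatively large the first factor alone controls the numerator, whereas when $|\xi|$ is small the shifted term $(1+|\omega|)^{\alpha}\xi + \omega$ stays close to $\omega$ and the second factor does the job. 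I would therefore argue by a dichotomy on the size of $(1+|\omega|)^{\alpha}|\xi|$ relative to $\tfrac{1}{2}(1+|\omega|)$.

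First I would treat the regime $(1+|\omega|)^{\alpha}|\xi| \geq \tfrac{1}{2}(1+|\omega|)$, i.e. $|\xi| \geq \tfrac{1}{2}(1+|\omega|)^{1-\alpha}$. Then $(1+|\xi|)^{1/(1-\alpha)} \geq |\xi|^{1/(1-\alpha)} \geq 2^{-1/(1-\alpha)}(1+|\omega|)$, and since the remaining factor satisfies $1 + |(1+|\omega|)^{\alpha}\xi + \omega| \geq 1$, one immediately obtains $\Lambda(\xi,\omega) \leq 2^{1/(1-\alpha)}$.

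Next I would treat the complementary regime $(1+|\omega|)^{\alpha}|\xi| < \tfrac{1}{2}(1+|\omega|)$. Here the reverse triangle inequality gives $|(1+|\omega|)^{\alpha}\xi + \omega| \geq |\omega| - (1+|\omega|)^{\alpha}|\xi| > |\omega| - \tfrac{1}{2}(1+|\omega|)$, whence $1 + |(1+|\omega|)^{\alpha}\xi + \omega| > \tfrac{1}{2}(1+|\omega|) > 0$. Using $(1+|\xi|)^{1/(1-\alpha)} \geq 1$ then yields $\Lambda(\xi,\omega) \leq 2$. Since $\alpha \in [0,1)$ forces $\tfrac{1}{1-\alpha} \geq 1$ and hence $2 \leq 2^{1/(1-\alpha)}$, both cases give the claimed bound, and taking the supremum over $\xi,\omega$ finishes the proof.

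There is no genuinely hard step here; the only point requiring care is the correct choice of the splitting threshold $\tfrac{1}{2}(1+|\omega|)$, which is precisely what makes the two denominator factors interchangeable and produces the clean constant $2^{1/(1-\alpha)}$ rather than a larger one. One should also double-check that the lower bound $|\omega| - \tfrac{1}{2}(1+|\omega|)$ in the second case, after adding $1$, remains strictly positive so that the estimate is meaningful, which it does exactly in the regime considered.
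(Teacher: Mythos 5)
Your proof is correct, and it takes a genuinely different route from the paper. The paper reduces by symmetry to $\xi \geq 0$, $\omega \leq 0$, computes $\partial\Lambda/\partial\xi$, locates the critical point $\xi^{\ast}=\frac{\beta(\omega)(1-\omega)-1+\alpha}{2-\alpha}$, and then bounds $\Lambda$ at the three candidate maximizers $\xi_1=0$, $\xi_2=-\beta(\omega)\omega$ and $\xi_3=\xi^{\ast}$; the constant $2^{1/(1-\alpha)}$ emerges from the estimate at $\xi_3$. You instead avoid calculus entirely with a dichotomy on whether $(1+|\omega|)^{\alpha}|\xi| \geq \tfrac12(1+|\omega|)$: in the first regime the factor $(1+|\xi|)^{1/(1-\alpha)}$ alone absorbs the numerator up to the loss $2^{1/(1-\alpha)}$, and in the second regime the reverse triangle inequality gives $1+|(1+|\omega|)^{\alpha}\xi+\omega| > \tfrac12(1+|\omega|)$ (note that this chain remains valid even when $|\omega|-\tfrac12(1+|\omega|)$ is negative, since adding $1$ to both sides of the triangle-inequality bound is legitimate regardless of sign), yielding the even better constant $2 \leq 2^{1/(1-\alpha)}$. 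Each step checks out, including the exponent arithmetic $|\xi|^{1/(1-\alpha)} \geq 2^{-1/(1-\alpha)}(1+|\omega|)$ in the first case. Your argument is shorter, requires no differentiability or case analysis on the sign of $\omega$, and makes transparent the ``twisted submultiplicativity'' interpretation the paper offers in the subsequent remark; the paper's calculus approach, on the other hand, identifies where the supremum is actually attained, which is slightly more informative but not needed for the lemma.
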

\begin{proof}
Since $\Lambda(-\xi,\omega) = \Lambda(\xi,-\omega)$, we may assume that $\xi\geq 0$. Moreover, observe that if $\omega \geq 0$ we 
have $\Lambda(\xi,\omega) \leq \Lambda(\xi,-\omega)$, for all $\xi \geq 0$. Hence, let $\omega \leq 0$.
 
For $\xi > -\beta(\omega)\omega$, it holds
\[
\frac{\partial\Lambda}{\partial\xi}(\xi,\omega) = \Lambda(\xi,\omega) \Big(-(1-\alpha)^{-1}(1+\xi)^{-1}-\beta^{-1}(\omega)(1+\beta^{-1}
(\omega)\xi+\omega)^{-1}\Big),
\]
and if $0 < \xi < -\omega\beta(\omega)$, we get
\[
\frac{\partial\Lambda}{\partial\xi}(\xi,\omega)=\Lambda(\xi,\omega) \Big(-(1-\alpha)^{-1}(1+\xi)^{-1}+\beta^{-1}(\omega)(1-\beta^{-1}(\omega)\xi-\omega)^{-1}\Big). 
\]
Now, $-(1-\alpha)^{-1}(1+\xi^\ast)^{-1}+\beta^{-1}(\omega)(1-\beta^{-1}(\omega)\xi^\ast-\omega)^{-1} = 0$ implies 
\[
\xi^\ast=\frac{\beta(\omega)(1-\omega)-1+\alpha}{2-\alpha}.
\]
Since $\frac{\partial\Lambda}{\partial\xi}(\xi,\omega) < 0$ for all $\xi > -\beta(\omega)\omega$ and all $\omega \leq 0$, it follows that for 
$\omega$ fixed, $\Lambda(\ \cdot\ ,\omega)$ 
takes its maximum in one of the points $\xi_1 = 0$, $\xi_2 = -\beta(\omega)\omega$, or, if $\xi^\ast \in (0,-\beta(\omega)\omega)$, 
in $\xi_3 = \xi^\ast$.

It holds $\Lambda(\xi_1,\omega) = 1$ for all $\omega \in \R$, and 
\begin{align*}
\Lambda(\xi_2,\omega) & = (1+(1+|\omega|)^{-\alpha}|\omega|)^{-1/(1-\alpha)}(1+|\omega|) \\
& = (1+|\omega|)^{\alpha/(1-\alpha)}((1+|\omega|)^\alpha+|\omega|)^{-1/(1-\alpha)}(1+|\omega|) \\
& \leq(1+|\omega|)^{-(1-\alpha)/(1-\alpha)+1} \\
& = 1.
\end{align*}
So it remains to check $\xi_3$. We find
\begin{align*}
\Lambda(\xi_3,\omega) & \leq (1+|\xi_3|)^{-1/(1-\alpha)}(1+|\omega|) \\
& = \Big(\frac{2-\alpha+\beta(\omega)(1-\omega)-1+\alpha}{2-\alpha}\Big)^{-1/(1-\alpha)}(1+|\omega|) \\
& \leq2^{1/(1-\alpha)}(1+\beta(\omega)(1-\omega))^{-1/(1-\alpha)}(1+|\omega|) \\
& \leq2^{1/(1-\alpha)}(1+\beta(\omega)|\omega|)^{-1/(1-\alpha)}(1+|\omega|) \\
& = 2^{1/(1-\alpha)}\Lambda(\xi_2,\omega) \\
& \leq 2^{1/(1-\alpha)}.
\end{align*}
This concludes the proof.
\end{proof}

\begin{remark}
Note that for the case $\alpha = 0$, this result is a simple consequence of the submultiplicativity of polynomial weight
\[
(1+|\omega|) \leq (1+|\xi|)(1+|\xi+\omega|).
\]
For $\alpha > 0$ we get in some sense a ``twisted`` submultiplicativity
\[
(1+|\omega|) \leq C (1+|\xi|)^{1/(1-\alpha)}(1+|\beta(\omega)^{-1}\xi+\omega|).
\]
\end{remark}

\begin{lemma}\label{shearlemma}
For $r > 1$ and $\theta > 0$, the following estimate holds:
\[
\int_\mathbb{R}(1+|t|)^{-r}(1+\theta|x-t|)^{-r}dt\leq C\big(\theta^{-1}(1+|x|)^{-r}+(1+\theta|x|)^{-r}\big).
\]
\end{lemma}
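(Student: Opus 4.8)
The plan is to exploit the fact that the integrand is a product of two bumps, one of width $\sim 1$ centred at $t=0$ (coming from $(1+|t|)^{-r}$) and one of width $\sim\theta^{-1}$ centred at $t=x$ (coming from $(1+\theta|x-t|)^{-r}$), and to recognise that the two terms on the right-hand side are precisely the contributions of these two bumps. Accordingly, I would split the line into the two regions
\[
A := \{ t \in \R : |x-t| \leq |x|/2 \}, \qquad B := \{ t \in \R : |x-t| > |x|/2 \},
\]
and estimate the integral over each separately, in each case discarding the rapidly decaying factor on the region where it is already uniformly small.

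On $A$ the point $t$ lies close to $x$, so $|t| \geq |x| - |x-t| \geq |x|/2$, whence $1+|t| \geq \frac{1}{2}(1+|x|)$ and therefore $(1+|t|)^{-r} \leq 2^{r}(1+|x|)^{-r}$. Pulling this factor out and extending the remaining integral to all of $\R$ gives
\[
\int_A (1+|t|)^{-r}(1+\theta|x-t|)^{-r}\,dt \leq 2^{r}(1+|x|)^{-r}\int_\R (1+\theta|x-t|)^{-r}\,dt = 2^{r}C_r\,\theta^{-1}(1+|x|)^{-r},
\]
where $C_r := \int_\R (1+|u|)^{-r}\,du < \infty$ because $r>1$, and the substitution $u=\theta(x-t)$ produces the factor $\theta^{-1}$. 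Symmetrically, on $B$ one has $1+\theta|x-t| \geq \frac{1}{2}(1+\theta|x|)$, so $(1+\theta|x-t|)^{-r} \leq 2^{r}(1+\theta|x|)^{-r}$; pulling this factor out leaves $\int_\R(1+|t|)^{-r}\,dt = C_r$, giving the bound $2^{r}C_r(1+\theta|x|)^{-r}$. Adding the two contributions yields the claim with $C = 2^{r}C_r$.

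The only point requiring genuine care is the choice of splitting: one must cut the line according to how $|x-t|$ compares with $|x|$ — equivalently, according to which bump $t$ belongs to — rather than, say, by the sign of $t$, so that on each piece exactly one of the two decaying factors can be replaced by its value at the "wrong" peak while the surviving one-dimensional integral collapses to a finite constant (over $A$) or a constant times $\theta^{-1}$ (over $B$). The hypothesis $r>1$ enters only to guarantee convergence of $\int_\R(1+|u|)^{-r}\,du$, producing the finite constant $C_r$; everything else is elementary, and I do not expect a real obstacle beyond getting this bookkeeping right.
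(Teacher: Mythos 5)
Your proof is correct. Note that the paper does not supply its own argument for this lemma at all: it simply cites Lemma 3.1 of the external reference \cite{dastte11} (Dahlke--Steidl--Teschke), so there is nothing in the present text to compare against line by line. Your two-region splitting $A=\{|x-t|\le |x|/2\}$, $B=\{|x-t|>|x|/2\}$ is the standard way such convolution-type estimates are proved, and every step checks out: on $A$ the reverse triangle inequality gives $1+|t|\ge\tfrac12(1+|x|)$, the substitution $u=\theta(x-t)$ produces the factor $\theta^{-1}$, and on $B$ the bound $1+\theta|x-t|\ge\tfrac12(1+\theta|x|)$ holds (trivially even when $x=0$, where $A$ has measure zero); the hypothesis $r>1$ is used exactly where you say, to make $C_r=\int_\R(1+|u|)^{-r}\,du$ finite, and the resulting constant $C=2^rC_r$ is independent of $\theta$ and $x$ as required. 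Your write-up therefore gives a complete, self-contained proof of a statement the paper only outsources.
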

(For the proof, see \cite[Lemma 3.1]{dastte11}.)

We are now able to complete proof of Theorem \ref{kern-integr-th}.

\textit{Proof of Theorem \ref{kern-integr-th}.}
First, the conditions imposed on $r$ imply that $r > \max\big\{1,\frac{\alpha}{2(1-\alpha)}\big\}$, i.e. $\psi,\varphi$ 
are admissible and Lemma \ref{symbol-derivatives-estimate} is applicable.

We rewrite the kernel $K_{\psi,\varphi}^\kappa$ as follows:
\begin{align*}
K_{\psi,\varphi}^\kappa&(x,\omega, x^\ast,\omega^\ast)\\ & = \langle  M_{-x}T_\omega D_{1/\beta(\omega)}\hat\psi,
m^{-\kappa}_\psi M_{-x^\ast}T_{\omega^\ast} D_{1/\beta(\omega^\ast)}\hat\varphi\rangle \\
& = \langle  M_{-x}T_\omega D_{1/\beta(\omega)}\hat\psi,
M_{-x^\ast} T_{\omega^\ast} D_{1/\beta(\omega^\ast)} (D_{\beta(\omega^\ast)}T_{-\omega^\ast}m^{-\kappa}_\psi)\hat\varphi\rangle \\
& = e^{2\pi i \omega^\ast(x^\ast-x)}\ \langle D_{\beta(\omega^\ast)}T_{-\omega^\ast} M_{-(x-x^\ast)}T_\omega D_{1/\beta(\omega)}\hat\psi,
h_{\omega^\ast,\kappa}\hat\varphi\rangle \\
& = e^{2\pi i \omega^\ast(x^\ast-x)}\ \langle M_{-(x-x^\ast)/\beta(\omega^\ast)}T_{\beta(\omega^\ast)(\omega-\omega^\ast)} D_{\beta(\omega^\ast)/\beta(\omega)}\hat\psi,
h_{\omega^\ast,\kappa}\hat\varphi\rangle.
\end{align*}
If we plug this in \eqref{finite-integral-kernels} and use the substitutions $(x-x^\ast)/\beta(\omega^\ast)\mapsto x$, 
$\beta(\omega^\ast)(\omega-\omega^\ast)\mapsto \omega$, the 
notation $\widetilde w_s$ defined in Lemma \ref{weightlemma} and
\[
 \theta(\omega,\omega^\ast) := \frac{\beta(\omega^\ast+\beta^{-1}(\omega^\ast)\omega)}{\beta(\omega^\ast)}
  = \left(\dfrac{1+|\omega^\ast|}{1+|\omega^\ast+(1+|\omega^\ast|)^\alpha\omega|}\right)^\alpha,
\]
we get
 \begin{align*}
&\int_{\R} \int_{\R}  \left|K_{\psi,\varphi}^\kappa(x,\omega,x^\ast,\omega^\ast)\right|
w_s(\omega,\omega^\ast)\;dxd\omega \\
& = \int_{\R}\int_{\R}|\langle M_{-(x-x^\ast)/\beta(\omega^\ast)}T_{\beta(\omega^\ast)(\omega-\omega^\ast)} 
D_{\beta(\omega^\ast)/\beta(\omega)}\hat\psi,h_{\omega^\ast,\kappa}\hat\varphi\rangle|w_s(\omega,\omega^\ast)\;dxd\omega \\
& = \int_{\R}\int_{\R}|\langle M_{-x}T_{\omega} 
D_{\theta^{-1}(\omega,\omega^\ast)}\hat\psi,h_{\omega^\ast,\kappa}\hat\varphi\rangle|\widetilde w_s(\omega,\omega^\ast)\;dx d\omega \\
& = \int_{\R}\int_{\R}|\mathcal{F}((T_{\omega} 
D_{\theta^{-1}(\omega,\omega^\ast)})(\hat\psi) h_{\omega^\ast,\kappa}\overline{\hat\varphi})(x)|\widetilde w_s(\omega,\omega^\ast)\;dx d\omega.
\end{align*}
Define
\[
G_{\omega,\omega^\ast}(\xi) := T_\omega D_{\theta(\omega,\omega^\ast)^{-1}}\hat\psi(\xi)\cdot 
\overline{\hat\varphi}(\xi)\cdot h_{\omega^\ast,\kappa}(\xi).
\]
 $G_{\omega,\omega^\ast}$ is twice continuously differentiable as $\hat\psi,\hat\varphi\in C^2(\R)$ by assumption and
 $h_{\omega^\ast,\kappa}\in C^2(\R)$ by Corollary \ref{estimate-symbol-term}.
Hence, as already explained in \eqref{pointwise-bound-G}, $\mathcal{F}(G_{\omega,\omega^\ast})$ can be estimated pointwise by
$$
 |\mathcal{F}(G_{\omega,\omega^\ast})(x)| \leq C \max\left\{\|G_{\omega,\omega^\ast}\|_1,
\|G_{\omega,\omega^\ast}^{(2)}\|_1\right\}\min\left\{1,\frac{1}{ x^2}\right\}.
$$
For $n\in\{0,2\}$ and $n_1,n_2,n_3\in\{0,1,2\}$ one gets
\[
 |G_{\omega,\omega^\ast}^{(n)}(\xi)|
 \leq C \sum \Big|\theta(\omega,\omega^\ast)^{n_1}T_\omega D_{\theta(\omega,\omega^\ast)^{-1}}\hat\psi^{(n_1)}(\xi)
 \hat\varphi^{(n_2)}(\xi)h_{\omega^\ast,\kappa}^{(n_3)}(\xi)\Big|,
\]
where the sum is taken over all triples $\{ n_1, n_2, n_3\}$ with $n_1 + n_2 + n_3 = n$.
In order to estimate this term we split it and observe that $\theta(\omega,\omega^\ast)\leq\widetilde m_\alpha(\omega,\omega^\ast)$. Thus, 
Lemma \ref{weightlemma}  ($s=\alpha$) yields
\begin{equation}\label{eq-p27}
\theta(\omega,\omega^\ast)\leq (1+|\omega|)^{\alpha/(1-\alpha)}.
\end{equation}
Hence
\begin{align*}
 |\theta & (\omega,\omega^\ast)^{n_1}T_\omega D_{\theta(\omega,\omega^\ast)^{-1}}\hat\psi^{(n_1)}(\xi)| \\
& \leq C (1+|\omega|)^{n_1\alpha/(1-\alpha)}\theta(\omega,\omega^\ast)^{1/2}(1+\theta(\omega,\omega^\ast)|\xi-\omega|)^{-r} \\
& \leq C (1+|\omega|)^{2\alpha/(1-\alpha)}\theta(\omega,\omega^\ast)^{1/2}
 (1+\theta(\omega,\omega^\ast)|\xi-\omega|)^{-r+2\alpha/(1-\alpha)}.
\end{align*}
The second part may be estimated using Corollary \ref{estimate-symbol-term} and Lemma \ref{Lambda-bounded}; this yields
\begin{align*}
| \hat\varphi^{(n_2)} & (\xi)h_{\omega^\ast,\kappa}^{(n_3)}(\xi)| \\
 \leq\ & C (1+|\xi|)^{-r}
\Big( (1+|\beta^{-1}(\omega^\ast)\xi+\omega^\ast|)^{-1}(1+|\omega^\ast|)\Big)^{n_3\alpha} \\
 =\ &C (1+|\xi|)^{-r+n_3\alpha/(1-\alpha)} \\
& \cdot\Big((1+|\xi|)^{-1/(1-\alpha)} (1+|\beta^{-1}(\omega^\ast)\xi+\omega^\ast|)^{-1}(1+|\omega^\ast|)\Big)^{n_3\alpha} \\
 \leq\ & C (1+|\xi|)^{-r+2\alpha/(1-\alpha)}
\Lambda(\xi,\omega^\ast)^{n_3\alpha}\leq C (1+|\xi|)^{-r+2\alpha/(1-\alpha)}.
\end{align*}
Consequently,  summarizing our previous considerations, we get
\begin{align*}
&|G_{\omega,\omega^\ast}^{(n)}(\xi)|\\ &\leq C(1+|\omega|)^{2\alpha/(1-\alpha)}\theta(\omega,\omega^\ast)^{1/2}
\Big[\big(1+\theta(\omega,\omega^\ast)|\xi-\omega|\big)\big(1+|\xi|\big)\Big]^{-r+2\alpha/(1-\alpha)}.
\end{align*}
Now, it is possible to apply  Lemma \ref{shearlemma}, as $r-2\alpha/(1-\alpha)>1$. Together with
$\theta^{-1}(\omega,\omega^\ast)\leq (1+|\omega|)^\alpha$
(and thus $\theta(\omega,\omega^\ast)\geq (1+|\omega|)^{-\alpha}$) and \eqref{eq-p27}, it follows
\begin{align*}
\big\|  G_{\omega,\omega^\ast}^{(n)}\big\|_1 
& \leq C (1+|\omega|)^{2\alpha/(1-\alpha)}\Big[\theta^{-1/2}(\omega,\omega^\ast)(1+|\omega|)^{-r+2\alpha/(1-\alpha)} \\
& \quad \quad +\theta^{1/2}(\omega,\omega^\ast)
(1+\theta(\omega,\omega^\ast)|\omega|)^{-r+2\alpha/(1-\alpha)}\Big] \\
& \leq C (1+|\omega|)^{2\alpha/(1-\alpha)}\Big[(1+|\omega|)^{\alpha/2-r+2\alpha/(1-\alpha)}\\
& \quad \quad + (1+|\omega|)^{\alpha/2(1-\alpha)}\big(1+(1+|\omega|)^{-\alpha}|\omega|\big)^{-r+2\alpha/(1-\alpha)}\Big] \\
& \leq C (1+|\omega|)^{2\alpha/(1-\alpha)}\Big[(1+|\omega|)^{\alpha/2-r+2\alpha/(1-\alpha)}\\
 & \quad \quad + (1+|\omega|)^{\alpha/2(1-\alpha)-(1-\alpha)r+2\alpha}\Big] \\
& \leq C (1+|\omega|)^{2\alpha/(1-\alpha)+\alpha/2(1-\alpha)-(1-\alpha)r+2\alpha}\\
& =C(1+|\omega|)^{(9\alpha-4\alpha^2)/2(1-\alpha)-(1-\alpha)r},
\end{align*}
where we have used that $$\big(1+(1+|\omega|)^{-\alpha}|\omega|\big)\geq (1+|\omega|)^{-\alpha}(1+|\omega|)$$ and that the condition on $r$
ensures that $$\alpha/2-r+2\alpha/(1-\alpha)<\alpha/2(1-\alpha)-(1-\alpha)r+2\alpha.$$
 Hence, by Lemma \ref{weightlemma}
\begin{align*}
\int_\R\max & \left\{\|G_{\omega,\omega^\ast}\|_1,\|G_{\omega,\omega^\ast}^{(2)}\|_1\right\} \widetilde{w}_s(\omega,\omega^\ast)\;d\omega \\
& \leq C\int_\R (1+|\omega|)^{(9\alpha-4\alpha^2+2s)/2(1-\alpha)-(1-\alpha)r}\;d\omega,
\end{align*}
which is finite whenever $(1-\alpha)r - \frac{9\alpha-4\alpha^2+2s}{2(1-\alpha)} > 1$, or equivalently
\[
r > \frac{2+2s+7\alpha-4\alpha^2}{2(1-\alpha)^2}.
\]
Finally,
\begin{align*}
&\int_{\R}\int_{\R} \left|K_{\psi,\varphi}^\kappa(x,\omega,x^\ast,\omega^\ast)\right|
w_s(\omega,\omega^\ast)\;dxd\omega \\
& \leq C \int_{\R}\max\left\{\|G_{\omega,\omega^\ast}\|_1,
\|G_{\omega,\omega^\ast}^{(2)}\|_1 \right\}\widetilde w_s(\omega,\omega^\ast)\;d\omega
\int_\R \min\left\{1,\frac{1}{ x^2}\right\}\;dx \\
& = C \int_{\mathbb{R}}\min\left\{1,\frac{1}{ x^2}\right\}\;dx \\
& < \infty.
\end{align*}
\hfill $\Box$

\begin{remark}
Assume we also impose a polynomial weight in $x$, i.e. 
\begin{align*}
 w_{s,t} & (x,x^\ast,\omega,\omega^\ast) \\
& := \max\left\{\left(\frac{1+|\omega|}{1+|\omega^\ast|}\right)^{s}\left(\frac{1+|x|}{1+|x^\ast|}\right)^{t},
 \left(\frac{1+|\omega^\ast|}{1+|\omega|}\right)^{s}\left(\frac{1+|x^\ast|}{1+|x|}\right)^{t}\right\} \\
& \leq \max\left\{\frac{1+|\omega|}{1+|\omega^\ast|},
\frac{1+|\omega^\ast|}{1+|\omega|}\right\}^{s}\max\left\{\frac{1+|x|}{1+|x^\ast|},
\frac{1+|x^\ast|}{1+|x|}\right\}^{t}.
\end{align*}
with $s,t>0$.
Easy calculations then show that 
\[
\sup_{z\in\mathbb{R}}\max\left\{\frac{1+|z|}{1+|x+z|},\frac{1+|x+z|}{1+|z|}\right\}=1+|x| 
\]
Hence by Lemma \ref{weightlemma}
\begin{align*}
 \widetilde w_s(x,\omega,x^\ast,\omega^\ast) & := w_s(x^\ast+\beta(\omega^\ast)x,x^\ast,\omega^\ast+\beta(\omega^\ast)^{-1}\omega,\omega^\ast) \\
& \leq (1+|\omega|)^{\frac{s}{1-\alpha}}(1+\beta(\omega^\ast)|x|)^{t} \\
&\leq (1+|\omega|)^{\frac{s}{1-\alpha}}(1+|x|)^{t}.
 \end{align*}
Hence, we need the derivatives of degree up to $k=\lfloor t\rfloor +2$
and get
\[
|\mathcal{F}(G_{\omega,\omega^\ast})(x)|\leq \min\left\{\|G_{\omega,\omega^\ast}\|_1,\frac{\|G_{\omega,\omega^\ast}^{(k)}\|_1}{(2\pi|x|)^k}
\right\}.
\]
One may then proceed as before.
\end{remark}

%%%%%%%%%%%%%%%%%%%%%%%%%%%%%%%%%%

\subsection{The admissible covering}\label{subsec:adm_cov}

%%%%%%%%%%%%%%%%%%%%%%%%%%%%%%%%%%

We define the family $\mathcal{U}^\varepsilon$ of open subsets in $\R\times\R$ by: 
$\mathcal{U}^\varepsilon := \{U_{j,k}^\varepsilon\}_{j,k\in \mathbb{Z}}$, with
\[
U_{j,k}^\varepsilon := \varepsilon\beta(\omega_j)(k-1,k+1)\times(\omega_j-2\varepsilon c\beta(\omega_j)^{-1},
\omega_j+2\varepsilon c\beta(\omega_j)^{-1}),
\]
\[
\omega_j := p_\alpha(\varepsilon j),
\]
and
\[
p_\alpha(\omega) := sgn(\omega)\Big(\big(1+(1-\alpha)|\omega|\big)^{1/(1-\alpha)}-1\Big).
\]

Similar arguments as in the proof of \cite[Theorem 5.12.]{DahForRau+2008} (we intend to cover $X$ and not $G_{aWH}$) show 
that $\mathcal{U}^\varepsilon$ is an admissible covering for $X\simeq\R\times\R$.
Hence, we omit the proof here. As the area $|U_{j,k}^\varepsilon| = 2\varepsilon\beta(\omega_j)\cdot 4\varepsilon c \beta(\omega_j)^{-1}
 = 8\varepsilon^2c$ is constant, it follows that $\mathcal{U}^\varepsilon$ is a moderate admissible covering for $X$.
Moreover, \cite[Lemma 5.14]{DahForRau+2008} guarantees that
\[
\sup_{j,k\in\Z}\ \ \sup_{(x,\omega),(x^\ast,\omega^\ast)\in U_{j,k}^\varepsilon}m_s(x,\omega,x^\ast,\omega^\ast) \leq C_{m_s,\mathcal{U}^\varepsilon} \leq C_{m_s}< \infty.
\]
with $C_{m_s}$ independent of $\varepsilon<\varepsilon_0$.
%%%%%%%%%%%%%%%

\subsection{The integrability of the local oscillations kernel}\label{subsec:osc_integr}

%%%%%%%%%%%%%%%

\begin{theorem}\label{osc-integr-th}
Let $\mathcal{U}^\varepsilon$ be the admissible covering for the $\alpha$-modulation transform defined above, $osc_{\mathcal{U}^\varepsilon,\Gamma}$
be the local oscillations
kernel
as defined in \eqref{loc-osc-defined} with
$\Gamma:X\times X\rightarrow\C$, $|\Gamma|\equiv 1$, appropriately chosen, 
$s \geq 0$ and $\psi\in L^2(\R)$, such that 
$\hat\psi\in C^3(\R)$ fulfills  $|\hat\psi^{(n)}(\xi)| \leq C (1+|\xi|)^{-r}$, for $n=0,1,2,3$, with parameter $r$ 
satisfying
\begin{equation}\label{decay-constant2}
 r>\frac{2+2s+7\alpha-4\alpha^2}{2(1-\alpha)^2}+1.
\end{equation}
Then, using the notation of \eqref{gamma1} and \eqref{gamma2}, it holds 
$$\gamma(\varepsilon)=\max\{\gamma_1(\varepsilon),\gamma_2(\varepsilon)\}\rightarrow 0, \mbox{ for } \varepsilon\rightarrow 0,$$
where 
\[
 \gamma_1(\varepsilon)= \sup\limits_{(x^\ast,\omega^\ast) \in \R^2}\int_{\R}\int_{\R}\left|osc_{\mathcal{U}^\varepsilon,\Gamma}
 (x,\omega,x^\ast,\omega^\ast)\right|w_s(\omega,\omega^\ast)\;dxd\omega 
\]
and 
\[
  \gamma_2(\varepsilon)=\sup\limits_{(x,\omega) \in \R^2}\int_{\R}\int_{\R}\left|osc_{\mathcal{U}^\varepsilon,\Gamma}
 (x,\omega,x^\ast,\omega^\ast)\right|w_s(\omega,\omega^\ast)\;dx^\ast d\omega^\ast .
\]
\end{theorem}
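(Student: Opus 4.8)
The plan is to reduce the oscillation to a first-order difference of the reproducing kernel $\mathcal{R}$ and then to rerun the integrability argument of Theorem~\ref{kern-integr-th} on the resulting differentiated kernel, at the cost of one extra derivative of $\hat\psi$ and one extra power in its decay. Since $\mathcal{R}(x,\omega,x^\ast,\omega^\ast)=K_\psi^1(x,\omega,x^\ast,\omega^\ast)$, the rewriting that precedes Theorem~\ref{kern-integr-th} applies with $\kappa=1$ and $\varphi=\psi$; in particular $\mathcal{R}$ factors as a unimodular phase $e^{2\pi i\omega^\ast(x^\ast-x)}$ times a smoothly varying quantity $G$. In the oscillation~\eqref{loc-osc-defined} one takes the supremum over $z=(z_1,z_2)\in Q_{(x^\ast,\omega^\ast)}$; since $\mathcal{U}^\varepsilon$ has finite overlap and each cell $U_{j,k}^\varepsilon$ has time-extent of order $\varepsilon\beta(\omega^\ast)$ and frequency-extent of order $\varepsilon\beta(\omega^\ast)^{-1}$ (Section~\ref{subsec:adm_cov}), the set $Q_{(x^\ast,\omega^\ast)}$ has these same anisotropic dimensions up to a uniform constant. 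I would choose the unimodular function $\Gamma(y,z)=e^{2\pi i(\omega^\ast x^\ast-z_2z_1)}$ in~\eqref{loc-osc-defined}, which cancels exactly the part of the phase of $\mathcal{R}(x,z)$ that is independent of the first time variable $x$ and satisfies $\Gamma(y,y)=1$. After extracting an overall unimodular prefactor, the residual equals $G(x,y)-e^{-2\pi i(z_2-\omega^\ast)x}G(x,z)$, which I split as
\[
G(x,y)-e^{-2\pi i(z_2-\omega^\ast)x}G(x,z)=\big[G(x,y)-G(x,z)\big]+\big[1-e^{-2\pi i(z_2-\omega^\ast)x}\big]G(x,z),
\]
separating a genuine smooth difference from a residual phase mismatch.

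First I would treat the smooth difference $G(x,y)-G(x,z)$ by the mean value theorem along $Q_{(x^\ast,\omega^\ast)}$, bounding it by $C\varepsilon$ times the anisotropically scaled derivatives $\beta(\omega^\ast)\,|\partial_{z_1}G|+\beta(\omega^\ast)^{-1}\,|\partial_{z_2}G|$. In the Fourier picture of Theorem~\ref{kern-integr-th}, $\partial_{z_1}$ replaces $\hat\psi$ by $\xi\hat\psi$ and produces a factor $\beta(\omega^\ast)^{-1}$ that cancels the time-extent $\varepsilon\beta(\omega^\ast)$, while $\partial_{z_2}$ produces $\hat\psi'$ together with contributions from $\beta'(\omega^\ast)$ coming from the moving dilation in $\sigma(z)=(z_1,z_2,\beta(z_2),0)$, its $\beta$-prefactors cancelling the frequency-extent $\varepsilon\beta(\omega^\ast)^{-1}$. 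In either case the new window decays one power slower, like $(1+|\xi|)^{-(r-1)}$, and running the $G^{(2)}$-estimate~\eqref{pointwise-bound-G} then requires two further $\xi$-derivatives, hence $\hat\psi\in C^3$. The residual phase term is handled differently: using $|1-e^{-2\pi i(z_2-\omega^\ast)x}|\leq\min\{2,2\pi|z_2-\omega^\ast|\,|x|\}$ together with $|z_2-\omega^\ast|\lesssim\varepsilon\beta(\omega^\ast)^{-1}$, the substitution of Theorem~\ref{kern-integr-th} turns the excess factor $|x|$ into $\varepsilon|x|$ in the rescaled time variable, so that $\int_\R|x|\,|\mathcal{F}(G)(x)|\,dx$ must now converge; this forces the sharper bound $|\mathcal{F}(G)(x)|\lesssim\|G^{(3)}\|_1\,|x|^{-3}$ and again demands $\hat\psi\in C^3$.

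It then remains to rerun the machinery of Theorem~\ref{kern-integr-th} on these two contributions, using the weight bound of Lemma~\ref{weightlemma}, the symbol estimates of Lemma~\ref{symbol-derivatives-estimate} and Corollary~\ref{estimate-symbol-term}, the bound of Lemma~\ref{Lambda-bounded} and the convolution estimate of Lemma~\ref{shearlemma}. The one-power loss in the window decay shifts the integrability threshold~\eqref{decay-constant1} upward by exactly $1$, which is precisely the hypothesis~\eqref{decay-constant2}, while every term carries an overall factor $\varepsilon$ inherited from the cell dimensions. This yields $\gamma_1(\varepsilon)\leq C\varepsilon$; the bound for $\gamma_2(\varepsilon)$ follows in the same way, now integrating in $(x^\ast,\omega^\ast)$, since Theorem~\ref{kern-integr-th} applies to arbitrary pairs of windows and the differentiated kernels obey the modulus symmetry $|K_{\psi,\varphi}^\kappa(x,y)|=|K_{\varphi,\psi}^\kappa(y,x)|$. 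Hence $\gamma(\varepsilon)=\max\{\gamma_1(\varepsilon),\gamma_2(\varepsilon)\}\leq C\varepsilon\to 0$ as $\varepsilon\to 0$.

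The hard part will be the anisotropic bookkeeping in the second step, and especially the frequency-direction derivative $\partial_{z_2}$: because the section moves the dilation $\beta(z_2)$, this derivative couples $\hat\psi$, $\hat\psi'$ and the symbol $m_\psi$ through $\beta'$, and one must verify that after the $\beta$-prefactors cancel against the cell's frequency-extent the surviving integrand still has exactly the structural form of $K_{\psi,\varphi}^\kappa$ with decay exponent $r-1$. Once this is checked, the remaining integrability is a direct repetition of the computation in the proof of Theorem~\ref{kern-integr-th}.
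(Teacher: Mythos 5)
Your overall strategy is the paper's: express the oscillation as a perturbation of the kernel of Theorem \ref{kern-integr-th}, pay one power of decay and one extra derivative of $\hat\psi$, and rerun the integrability argument with $r-1$ in place of $r$, which is exactly what shifts \eqref{decay-constant1} to \eqref{decay-constant2}. But there is a concrete gap in your choice of $\Gamma$ and the resulting ``residual phase mismatch'' term. With $\Gamma(y,z)=e^{2\pi i(\omega^{\ast}x^{\ast}-z_2z_1)}$ the leftover factor is $1-e^{-2\pi i(z_2-\omega^{\ast})x}$ where $x$ is the \emph{absolute} time variable of the first atom. The function $G(x,z)$ is concentrated around $x\approx z_1\approx x^{\ast}$, and after the substitution $x\mapsto x^{\ast}+\beta(\omega^{\ast})\tilde x$ of Theorem \ref{kern-integr-th} one has
\[
|z_2-\omega^{\ast}|\,|x|\;\lesssim\;\varepsilon\,\beta(\omega^{\ast})^{-1}|x^{\ast}|+\varepsilon|\tilde x|,
\]
and the first summand is \emph{not} small: since $\gamma_1(\varepsilon)$ takes a supremum over $(x^{\ast},\omega^{\ast})$, for $|x^{\ast}|\gtrsim\beta(\omega^{\ast})\varepsilon^{-1}$ the phase factor oscillates through full cycles on the bulk of the support of $G(x,z)$, so $|1-e^{-2\pi i(z_2-\omega^{\ast})x}|$ is of order $2$ there and this term contributes something comparable to $\rho$ rather than $o(1)$. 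Your claim that the excess factor becomes $\varepsilon|x|$ ``in the rescaled time variable'' silently drops the affine offset $x^{\ast}$, and no amount of extra smoothness of $G$ (your $\|G^{(3)}\|_1|x|^{-3}$ bound) repairs a phase that is large on the core of the integrand.

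The paper's choice is $\Gamma(x^{\ast},\omega^{\ast},y,\eta)=e^{-2\pi i\omega^{\ast}(x^{\ast}-y)}$, which also cancels the cross term in the absolute positions: with it the difference of the two atoms becomes the \emph{fixed} atom operator $M_{-x^{\ast}}T_{\omega^{\ast}}D_{1/\beta(\omega^{\ast})}$ applied to the perturbed window $(I-M_{\mu}T_{\lambda}D_{1+\nu})\hat\psi$, where $\mu=-(y-x^{\ast})/\beta(\omega^{\ast})$, $\lambda=(\eta-\omega^{\ast})\beta(\omega^{\ast})$ and $1+\nu=\beta(\omega^{\ast})/\beta(\eta)$ are all $O(\varepsilon)$ uniformly in $(x^{\ast},\omega^{\ast})$ by Lemma \ref{lemma:constants-covering}. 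The phase discrepancy then appears as $|1-e^{2\pi i\mu\xi}|$ against the decay variable $\xi$ of $\hat\psi^{(k)}$, with $\mu$ a genuinely small \emph{relative} displacement; splitting at $|\xi|=|\mu|^{-1/2}$ gives a factor $\delta(\varepsilon)\to0$ (of order $\varepsilon^{1/2}$, incidentally, not the $C\varepsilon$ you claim) at the cost of one power of decay. The remaining pieces (translation and dilation of the window, handled by the Mean Value Theorem on $\hat\psi^{(k)}$, $k\le2$, hence $\hat\psi\in C^{3}$) correspond to your ``smooth difference'' term and are sound in spirit, though your frequency-direction bookkeeping through $\beta(z_2)$ is replaced in the paper by the much cleaner operator identity above. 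To repair your argument you would need to modify $\Gamma$ so that the residual phase depends only on $(z_2-\omega^{\ast})(x-x^{\ast})$ rather than on $(z_2-\omega^{\ast})x$.
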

\begin{remark}
 Theorem \ref{osc-integr-th} together with the considerations of Section \ref{subsec:adm_cov} therefore show that the
 assumptions of Theorem \ref{discretization_result} are met, i.e. discretization is possible.
\end{remark}
We need the following Lemma.
\begin{lemma}\label{lemma:constants-covering}
Let  $\mathcal{U}^\varepsilon$ be the admissible covering defined in Section \ref{subsec:adm_cov}. There exist  two constants $C_1,C_2>0$,
independent of $x,\omega\in\R \mbox{ and } 0<\varepsilon<\varepsilon_0$, such that
\begin{equation*}\label{eq-Q-x-w}
 (x,\omega) - Q_{x,\omega} \subseteq \big(- C_1 \varepsilon \beta(\omega), C_1 \varepsilon \beta(\omega)\big)\times
 \big(- C_2 \varepsilon \beta(\omega)^{-1}, C_2 \varepsilon \beta(\omega)^{-1}\big).
\end{equation*}
 \end{lemma}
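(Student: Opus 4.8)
The plan is to reduce the whole statement to a single uniform comparability estimate between $\beta(\omega)$ and $\beta(\omega_j)$, valid for every box $U_{j,k}^\varepsilon$ containing the fixed point $(x,\omega)$. Recall that $Q_{x,\omega}=\bigcup\{U_{j,k}^\varepsilon:(x,\omega)\in U_{j,k}^\varepsilon\}$, so it suffices to bound, for each such box and each $(x',\omega')\in U_{j,k}^\varepsilon$, the deviations $|x-x'|$ and $|\omega-\omega'|$ by $C_1\varepsilon\beta(\omega)$ and $C_2\varepsilon\beta(\omega)^{-1}$ respectively, with constants independent of $x,\omega,j,k$ and of $\varepsilon<\varepsilon_0$. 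Since each box is a rectangle of $x$-width $2\varepsilon\beta(\omega_j)$ centered at $\varepsilon\beta(\omega_j)k$ and of $\omega$-width $4\varepsilon c\,\beta(\omega_j)^{-1}$ centered at $\omega_j$, the entire argument hinges on trading $\beta(\omega_j)$ for $\beta(\omega)$ at the cost of a universal constant.

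First I would fix $\varepsilon_0$ small enough that $2c\,\varepsilon_0\le\tfrac12$ (shrinking the $\varepsilon_0$ from the previous subsection if necessary) and establish the comparability. If $(x,\omega)\in U_{j,k}^\varepsilon$, then by the definition of the $\omega$-interval $|\omega-\omega_j|<2\varepsilon c\,\beta(\omega_j)^{-1}=2\varepsilon c\,(1+|\omega_j|)^{\alpha}$. Using $\alpha<1$ and $1+|\omega_j|\ge1$ we have $(1+|\omega_j|)^{\alpha}\le 1+|\omega_j|$, so that $|\omega-\omega_j|\le 2c\,\varepsilon\,(1+|\omega_j|)\le\tfrac12(1+|\omega_j|)$. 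The triangle inequality then gives $\tfrac12(1+|\omega_j|)\le 1+|\omega|\le\tfrac32(1+|\omega_j|)$, and raising to the power $-\alpha$ yields $(3/2)^{-\alpha}\beta(\omega_j)\le\beta(\omega)\le 2^{\alpha}\beta(\omega_j)$. In particular $\beta(\omega_j)^{-1}\le 2^{\alpha}\beta(\omega)^{-1}$ and $\beta(\omega_j)\le(3/2)^{\alpha}\beta(\omega)$, with constants depending only on $\alpha$.

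Second, I would propagate these bounds from the center to the whole box. For any $(x',\omega')\in U_{j,k}^\varepsilon$, both $\omega$ and $\omega'$ lie within $2\varepsilon c\,\beta(\omega_j)^{-1}$ of $\omega_j$, so $|\omega-\omega'|<4\varepsilon c\,\beta(\omega_j)^{-1}\le 4c\,2^{\alpha}\,\varepsilon\,\beta(\omega)^{-1}$, which gives the second coordinate bound with $C_2:=4c\,2^{\alpha}$. Likewise both $x$ and $x'$ lie within $\varepsilon\beta(\omega_j)$ of the center $\varepsilon\beta(\omega_j)k$, hence $|x-x'|<2\varepsilon\beta(\omega_j)\le 2(3/2)^{\alpha}\,\varepsilon\,\beta(\omega)$, giving the first coordinate bound with $C_1:=2(3/2)^{\alpha}$. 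Since these inequalities hold simultaneously for every index in $\mathcal{I}(x,\omega)$ and every point of the corresponding box, taking the union over $\mathcal{I}(x,\omega)$ shows that $(x,\omega)-Q_{x,\omega}$ is contained in the asserted rectangle, with $C_1,C_2$ depending only on $\alpha$ and $c$.

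The only genuine point requiring care — and the single place where the restriction $\varepsilon<\varepsilon_0$ is essential — is the uniform comparability of the second paragraph: one must guarantee that the $\omega$-deviation is a \emph{fixed fraction} of $1+|\omega_j|$, no matter how large $|\omega_j|$ (and hence $\beta(\omega_j)^{-1}$) becomes, which is exactly what the choice $2c\,\varepsilon_0\le\tfrac12$ secures. Once that is in place, everything else is elementary bookkeeping with the triangle inequality, and the absolute size of the deviations growing with $|\omega_j|$ is harmless because it is always measured against the matching power $\beta(\omega)^{\pm1}$.
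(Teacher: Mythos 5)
Your proof is correct, and it takes a noticeably more elementary route than the paper's. The paper bounds the spread of $Q_{x,\omega}$ globally: it invokes the finite overlap constant $N$ to restrict the frequency indices $j$ of boxes meeting $(x,\omega)$ to a window $[j^\ast-N,j^\ast+N]$, then applies the Mean Value Theorem to $p_\alpha$ (using $p_\alpha'=\beta(p_\alpha(\cdot))^{-1}$) together with a submultiplicativity estimate for $(1+(1-\alpha)t)^{\alpha/(1-\alpha)}$ to convert $\beta(\omega_{j^\ast+N})^{-1}$ into $C_2\varepsilon\beta(\omega)^{-1}$, and handles the $x$-coordinate separately via the monotonicity of $\beta$ and the fact that each point lies in at most two $x$-intervals per fixed $j$. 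You instead argue box by box: the mere membership $(x,\omega)\in U_{j,k}^\varepsilon$ forces $|\omega-\omega_j|<2c\varepsilon(1+|\omega_j|)^\alpha\le\tfrac12(1+|\omega_j|)$ once $\varepsilon_0\le 1/(4c)$, which yields the two-sided comparability $\beta(\omega_j)^{\pm1}\le C_\alpha\,\beta(\omega)^{\pm1}$ directly from the triangle inequality, after which the explicit box dimensions give both coordinate bounds with explicit constants $C_1=2(3/2)^\alpha$, $C_2=4c\cdot 2^\alpha$, and the union over $\mathcal{I}(x,\omega)$ is immediate. Your version is shorter, produces explicit constants, and uses only the definition of the boxes --- not the finite overlap property nor the specific structure of the lattice $\{\omega_j\}$ --- whereas the paper's argument is phrased in a form that would survive for more general coverings obtained by sampling the frequency line. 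Both establish the lemma; yours is a clean simplification for this particular covering.
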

 
 \begin{proof}
 Let $N$ be the constant from the
 finite overlap property (Definition \ref{def-admissible-covering} (ii)). Observe that,
\[
  p_\alpha'(\omega)=\big(1+(1-\alpha)|\omega|\big)^{\alpha/(1-\alpha)}=\beta(p_\alpha(\omega))^{-1}.
\]
As the sampling points are symmetrically distributed we may assume w.l.o.g. that $\omega \geq 0$.
Let $(y,\eta)\in Q_{x,\omega}$ and $j^\ast>0$ be the smallest index such that $\omega_{j^\ast} >\omega$. Then, 
$0\leq \omega_{j^\ast-1}\leq\omega<\omega_{j^\ast}$. Using the Mean Value Theorem, for  
$\delta\in(\varepsilon(j^\ast-N),\varepsilon(j^\ast+N))$, and submultiplicativity yields
\begin{align*}
|\eta-\omega| & \leq |\omega_{j^\ast+N}+2\varepsilon c\beta(\omega_{j^\ast+N})^{-1}-(\omega_{j^\ast-N}-2\varepsilon c\beta(\omega_{j^\ast-N})^{-1})| \\
& \leq |\omega_{j^\ast+N}-\omega_{j^\ast-N}|+4\varepsilon c \beta(\omega_{j^\ast+N})^{-1} \\
& = |p_\alpha(\varepsilon(j^\ast+N)) - p_\alpha(\varepsilon(j^\ast-N))| + 4 \varepsilon c \beta(\omega_{j^\ast+N})^{-1} \\
& =2N \varepsilon\beta(p_\alpha(\delta))^{-1} + 4 \varepsilon c \beta(\omega_{j^\ast+N})^{-1}\\
& \leq C \varepsilon \beta(\omega_{j^\ast+N})^{-1} \\
& = C \varepsilon \big(1+(1-\alpha)\varepsilon(j^\ast-1+N+1)\big)^{\alpha/(1-\alpha)} \\
& \leq C \varepsilon \big(1+(1-\alpha)\varepsilon(j^\ast-1)\big)^{\alpha/(1-\alpha)}\big(1+(1-\alpha)\varepsilon(N+1)\big)^{\alpha/(1-\alpha)} 
\\
& = C_2 \varepsilon\beta(\omega_{j^\ast-1})^{-1} \\
& \leq  C_2 \varepsilon\beta(\omega)^{-1}.
\end{align*}

Every point $x\in\R$ is contained in at most two of the intervals 
$\Omega_{j,k}^\varepsilon := \varepsilon\beta(\omega_j)(k-1,k+1)$ if $j\in \mathbb{Z}$  is fixed. Hence, 
$(y,\eta)\in U_{j,k}^\varepsilon\subset Q_{x,\omega}$ implies 
that $|y-x| \leq 2 \varepsilon\beta(\omega_j)$. 
As $(x_{j,k},\omega_j)\in Q_{x,\omega}$ it follows by previous calculations that
\[
|\omega-\omega_j|\leq C_2\varepsilon\beta(\omega)^{-1}.
\]
Assuming that $\omega\geq 1$ and $C_2\varepsilon<1/2$ yields
\[
|\omega_j|\geq |\omega|-C_2\varepsilon\beta(\omega)^{-1}\geq 0.
\]
As $\beta$ is monotonically decreasing we therefore get 
\begin{align*}
|y-x| & \leq 2 \varepsilon\beta(\omega_{j^\ast}) \\
& \leq 2 \varepsilon\beta\big(|\omega|-C_2\varepsilon\beta(\omega)^{-1}\big) \\
& = 2 \varepsilon\big(1+|\omega|-C_2\varepsilon(1+|\omega|)^\alpha\big)^{-\alpha} \\
& = 2 \varepsilon(1+|\omega|)^{-\alpha}\big(1-C_2\varepsilon(1+|\omega|)^{\alpha-1}\big)^{-\alpha} \\
& = 2 \varepsilon\beta(\omega)(1-C_2\varepsilon)^{-\alpha} \\
& \leq 2^{1+\alpha}\varepsilon\beta(\omega).
\end{align*}
If $0\leq\omega<1$ we have
\begin{align*}
|y-x| & \leq 2 \varepsilon\beta(\omega_{j^\ast}) \\
& \leq 2 \varepsilon\beta(0) \\
& = 2 \varepsilon\beta(1)^{-1}\beta(1) \\
& \leq 2^{1+\alpha} \varepsilon\beta(\omega),
\end{align*}
which concludes the proof.
\end{proof}

\textit{Proof of Theorem \ref{osc-integr-th}.}
First, we rewrite 
\begin{align*}
&\mathcal{R}(x,\omega,x^\ast,\omega^\ast)-
\Gamma(x^\ast,\omega^\ast,y,\eta)\mathcal{R}(x,\omega,y,\eta) \\
& = 
  \langle A_\sigma^{-1}\pi(\sigma(x,\omega))\psi,\pi(\sigma(x^\ast,\omega^\ast))\psi
  -\overline{\Gamma(x^\ast,\omega^\ast,y,\eta)}
  \pi(\sigma(y,\eta))\psi\rangle \\
& =
  \langle A^{-1}_\sigma T_xM_\omega D_{\beta(\omega)}\psi,T_{x^\ast}M_{\omega^\ast} D_{\beta(\omega^\ast)}
  \psi-\overline{\Gamma(x^\ast,\omega^\ast,y,\eta)}
  T_{y}M_{\eta} D_{\beta(\eta)}\psi\rangle \\
& = 
  \langle m^{-1}M_{-x}T_\omega D_{1/\beta(\omega)}\hat\psi,M_{-x^\ast}T_{\omega^\ast} D_{1/\beta(\omega^\ast)}
\Upsilon(x^\ast,\omega^\ast,y,\eta)\hat\psi\rangle,
\end{align*}
where the operator $\Upsilon$ is defined by
\begin{align*}
& \Upsilon (x^\ast,\omega^\ast,y,\eta) \\
& := I-\overline{\Gamma(x^\ast,\omega^\ast,y,\eta)}e^{2\pi i\omega^\ast(x^\ast-y)}
  M_{-(y-x^\ast)/\beta(\omega^\ast)}T_{(\eta-\omega^\ast)\beta(\omega^\ast)} D_{\beta(\omega^\ast)/\beta(\eta)}.
\end{align*}
 
Setting $\Gamma(x^\ast,\omega^\ast,y,\eta) := e^{-2\pi i\omega^\ast(x^\ast-y)}$ yields
\[
 \Upsilon(x^\ast,\omega^\ast,y,\eta)
  = I-M_{-(y-x^\ast)/\beta(\omega^\ast)}T_{(\eta-\omega^\ast)\beta(\omega^\ast)} D_{\beta(\omega^\ast)/\beta(\eta)}.
\]

 By Lemma \ref{lemma:constants-covering},
there exist constants $C_1,C_2$ independent of $x^\ast,\omega^\ast\in\R$ and $\varepsilon>0$ such that
\[
 (x^\ast,\omega^\ast) - Q_{x^\ast,\omega^\ast} \subseteq (- C_1 \varepsilon \beta(\omega^\ast), C_1 \varepsilon \beta(\omega^\ast))\times
 (- C_2 \varepsilon \beta(\omega^\ast)^{-1}, C_2 \varepsilon \beta(\omega^\ast)^{-1})
\]
Hence, $|y-x^\ast|/\beta(\omega^\ast)\leq C_1 \varepsilon$ and $|\eta-\omega^\ast|\beta(\omega^\ast)\leq C_2 \varepsilon$
for all $(y,\eta)\in Q_{x^\ast,\omega^\ast}$. Moreover, 
\begin{equation*}
  \frac{\beta(\omega^\ast)}{\beta(\eta)} = \left(\frac{1+|\eta|}{1 + |\omega^\ast|}\right)^\alpha
  \leq \left(\frac{1+|\omega^\ast| + C_2\varepsilon\beta(\omega^\ast)^{-1}}{1 + |\omega^\ast|}\right)^\alpha
 \end{equation*}
 \begin{equation*}
  = \left(1+\frac{C_2 \varepsilon(1 + |\omega^\ast|)^\alpha}{1 + |\omega^\ast|}\right)^\alpha \leq (1 + C_2 \varepsilon)^\alpha \leq 1 + C_2 \varepsilon
 \end{equation*}
 and for $C_2\varepsilon < 1$,
 \begin{equation*}
  \frac{\beta(\omega^\ast)}{\beta(\eta)}
  \geq \left(\frac{1 + |\omega^\ast| - C_2 \varepsilon\beta(\omega^\ast)^{-1}}{1 + |\omega^\ast|}\right)^\alpha
 \end{equation*}
 \begin{equation*}
  = \left(1 - \frac{C_2 \varepsilon(1 + |\omega^\ast|)^\alpha}{1 + |\omega^\ast|}\right)^\alpha \geq(1-C_2 \varepsilon)^\alpha \geq 1 - C_2  \varepsilon.
 \end{equation*}
Consequently, for $C_2\varepsilon < 1/2$, it holds
 \[
  1 - 2 C_2 \varepsilon \leq \frac{1}{1 + C_2 \varepsilon} \leq \frac{\beta(\eta)}{\beta(\omega^\ast)} \leq
  \frac{1}{1 - C_2 \varepsilon} \leq  1 + 2 C_2 \varepsilon.
 \]

 Hence, the generalized oscillation kernel can be estimated as follows
\begin{align*}
&osc_{\mathcal{U} ,\Gamma}(x,\omega,x^\ast,\omega^\ast) \\
& = \sup_{(y,\eta)\in Q_{x^\ast,\omega^\ast}}
  |\langle m^{-1}M_{-x}T_\omega D_{1/\beta(\omega)}\hat\psi,M_{-x^\ast}T_{\omega^\ast} D_{1/\beta(\omega^\ast)}
\Upsilon(x^\ast,\omega^\ast,y,\eta)\hat\psi\rangle|\\
&\leq \sup_{\lambda,\mu,\nu\in \sigma(-\varepsilon,\varepsilon)}
  |\langle m^{-1}M_{-x}T_\omega D_{1/\beta(\omega)}\hat\psi,M_{-x^\ast}T_{\omega^\ast} D_{1/\beta(\omega^\ast)}
\Phi(\lambda,\mu,\nu)\hat\psi\rangle|, 
\end{align*}
 where $\sigma := \max\{C_1, 2 C_2\}$ is independent of $x^\ast,\omega^\ast$ and
 $$
 \Phi(\lambda,\mu,\nu):=I-M_{\mu}T_{\lambda} D_{1+\nu}.
 $$
 The main idea of the proof is to show that
\begin{equation*}
 \Big|\frac{d^{k}}{d\xi^k}\Phi(\lambda,\mu,\nu) (\hat\psi)(\xi)\Big|\leq C\delta(\varepsilon)(1+|\xi|)^{-t},\ k=0,1,2,
\end{equation*}
with $\delta(\varepsilon)\rightarrow 0$, if $\varepsilon\rightarrow 0$, and  $t$ satisfies
 \eqref{decay-constant1}. Then, following the arguments of Theorem \ref{kern-integr-th} yields the stated result, i.e.
 $\gamma(\varepsilon)=C\delta(\varepsilon)$ which converges to zero.

Short calculations show that $|M_{\mu}T_{\lambda}D_{(1+\nu)^{-1}}\hat\psi(\xi)|\leq C(1+|\xi|)^{-r}$, if $\nu,\lambda\leq C$ and 
consequently $$|2\pi i\xi\mu M_{\mu}T_{\lambda}D_{(1+\nu)^{-1}}\hat\psi(\xi)|\leq C|\mu|(1+|\xi|)^{-r+1}.$$
For all $\lambda,\mu,\nu\in \sigma(-\varepsilon,\varepsilon)$, it holds
 \begin{align*}
 \Big|\frac{d}{d\xi}\Phi&(\lambda,\mu,\nu) (\hat\psi)(\xi)\Big|\\
  &=\big|\hat\psi'(\xi)-M_{\mu}T_{\lambda}
  D_{(1+\nu)^{-1}}\big(2\pi i\mu \hat\psi(\xi)+(1+\nu)\hat\psi'(\xi)\big)\big| \\
  &\leq C\left\{
  \big|\big(I-(1+\nu)M_{\mu}T_{\lambda}D_{(1+\nu)^{-1}}\big)\hat\psi'(\xi)\big|+|\mu|(1+|\xi|)^{-r}\right\}\\
 &\leq C
 \left\{ \big|\big(I-(1+\nu)M_{\mu}T_{\lambda}D_{(1+\nu)^{-1}}\big)\hat\psi'(\xi)\big|+\varepsilon(1+|\xi|)^{-r}\right\}.
 \end{align*}
Repeating the previous argument yields, for $k=0,1,2$,
$$
 \Big|\frac{d^{k}}{d\xi^k}\Phi(\lambda,\mu,\nu) (\hat\psi)(\xi)\Big| $$ $$
  \leq C
  \left\{\big|\big(I-(1+\nu)^kM_{\mu}T_{\lambda}D_{(1+\nu)^{-1}}\big)\hat\psi^{(k)}(\xi)\big|+\varepsilon (1+|\xi|)^{-r}\right\}
$$
 It remains to estimate the first term.  By the triangle inequality one has
   \begin{align*}
& |\hat\psi^{(k)}(\xi)-(1+\nu)^{k}M_{\mu}T_{\lambda}D_{(1+\nu)^{-1}}\hat\psi^{(k)}(\xi)| \\
& \leq C\Big\{ |1-(1+\nu)^{k+1/2}||\hat\psi^{(k)}(\xi)|+ \underbrace{ |1-e^{2\pi i\mu\xi}||\hat\psi^{(k)}(\xi)|}_{(I)}+ \\
& \quad +   \underbrace{ |\hat\psi^{(k)}(\xi)-\hat\psi^{(k)}(\xi-\lambda)| }_{(II)}+  
\underbrace{ |\hat\psi^{(k)}(\xi-\lambda)-\hat\psi^{(k)}\big((1+\nu)(\xi-\lambda)\big)|}_{(III)}\Big\}.
\end{align*}

\textit{Ad (I):} Recall that,  for $0 < |x| < 1/2$, $|1-e^{2\pi i x}| = |1-e^{2\pi i |x|}|$ is strictly increasing in $|x|$.
We split the real line into the parts $|\xi| < |\mu|^{-1/2}$ and $|\xi| \geq |\mu|^{-1/2}$.

Let $|\xi| < |\mu|^{-1/2}$ and $|\mu|^{1/2}\leq (\sigma\varepsilon)^{1/2}<1/2$, then
\begin{align*}
  |1-e^{2\pi i\mu\xi}||\hat\psi^{(k)}(\xi)|&\leq C\big|1-e^{2\pi i|\mu|^{1/2}}\big|(1+|\xi|)^{-r}\\
 & \leq  C\big|1-e^{2\pi i|\mu|^{1/2}}\big|(1+|\xi|)^{-r+1}.
\end{align*}
If $|\xi| \geq |\mu|^{-1/2}$, it holds
\begin{align*}
|1-e^{2\pi i\mu\xi}||\hat\psi^{(k)}(\xi)| &\leq 2|\hat\psi^{(k)}(\xi)| \leq 2 C (1+|\xi|)^{-r} \\
& \leq 2 C \big(1+|\mu|^{-1/2}\big)^{-1}(1+|\xi|)^{-r+1}.
\end{align*}
Hence, $\forall \xi\in\R$ 
\begin{align*}
|1-e^{2\pi i\mu\xi}|&|\hat\psi^{(k)}(\xi)| \\ & \leq C\max\Big\{\big|1-e^{2\pi i|\mu|^{1/2}}\big|,\big(1+|\mu|^{-1/2}\big)^{-1}\Big\}(1+|\xi|)^{-r+1}
\\ & \leq C\max\Big\{\big|1-e^{2\pi i(\sigma\varepsilon)^{1/2}}\big|,\big(1+(\sigma\varepsilon)^{-1/2}\big)^{-1}\Big\}(1+|\xi|)^{-r+1}.
\end{align*}

\textit{Ad (II):}
By the Mean Value Theorem, it holds for some $\xi^\ast\in(0,\lambda)$, with  $|\lambda|\leq\sigma\varepsilon<1$
\begin{align*}
  |\hat\psi^{(k)}(\xi) - \hat\psi^{(k)}(\xi-\lambda)|&= |\lambda||\hat\psi^{(k+1)}(\xi-\xi^\ast)| \leq C |\lambda|(1+|\xi-\xi^\ast|)^{-r}\\
  &\leq C \sigma\varepsilon(1+|\xi|)^{-r}.
\end{align*}
\textit{Ad (III):} 
Applying  the Mean Value Theorem again yields 
\begin{align*}
|\hat\psi^{(k)}(\xi-\lambda) & -\hat\psi^{(k)}((1+\nu)(\xi-\lambda))| \\
& =|\nu(\xi-\lambda)| |\hat\psi^{(k+1)}(\xi-\lambda+\xi^\ast)| \\
& \leq C |\nu|(1+|\xi|)(1+|\xi-\lambda+\xi^\ast|)^{-r} \\
& \leq C |\nu|(1+|\xi|)\big(1+(1-|\nu|)(|\xi|-|\lambda|)\big)^{-r} \\
& \leq C \sigma\varepsilon(1+|\xi|)^{-r+1},
\end{align*}
 for some $\xi^\ast\in(0,\nu(\xi-\lambda))$, with
$|\nu|<\sigma\varepsilon<1$ and $(1-|\nu|)|\lambda| < \sigma\varepsilon < 1$.

Using all previous considerations we obtain for $\mu,\lambda,\nu\in \sigma(-\varepsilon,\varepsilon)$
  \begin{align*}
& |\hat\psi^{(k)}(\xi)-(1+\nu)^{k}M_{\mu}T_{\lambda}D_{(1+\nu)^{-1}}\hat\psi^{(k)}(\xi)| \\
& \leq C \max\left\{|1-(1+\sigma\varepsilon)^{k+1/2}|,\big|1-e^{2\pi i(\sigma\varepsilon)^{1/2}}\big|,
\big(1+(\sigma\varepsilon)^{-1/2}\big)^{-1},\sigma\varepsilon\right\}\\ &\quad\quad\cdot(1+|\xi|)^{-r+1}.
\end{align*}
All in all, we have shown that
\begin{equation*}
 \Big|\frac{d^{k}}{d\xi^k}\Phi(\lambda,\mu,\nu) (\hat\psi)(\xi)\Big|\leq C\delta(\varepsilon)(1+|\xi|)^{-r+1},\ k=0,1,2,
\end{equation*}
which concludes the proof as $t:=r-1$ satisfies \eqref{decay-constant1} by \eqref{decay-constant2}.
\hfill $\Box$

%%%%%%%%%%%%%%%%%%%%%%%%%%%%%%%%%%%%%%%%%%%%%%%%%%%%%%%

\section*{Acknowledgement}

%%%%%%%%%%%%%%%%%%%%%%%%%%%%%%%%%%%%%%%%%%%%%%%%%%%%%%%

This work was funded by the Austrian Science Fund (FWF) DACH-project BIOTOP('Adaptive Wavelet and Frame techniques 
for acoustic BEM';I-1018-N25), by the FWF START-project FLAME ('Frames and Linear Operators for Acoustical 
Modeling and Parameter Estimation'; Y 551-N13) and  the DFG Project Number DA 360/19-1. We would like to thank 
all the project members for valuable discussions and comments.

%%%%%%%%%%%%%%%%%%%%%%%%%%%%%%%%%%%%%%%%%%%%%%%%%%%%%%%

\bibliographystyle{plain}
\bibliography{biotop_coorbit}

%%%%%%%%%%%%%%%%%%%%%%%%%%%%%%%%%%%%%%%%%%%%%%%%
\begin{comment}

%\bibliographystyle{plain}
%\bibliography{bibliography}

\end{comment}
%%%%%%%%%%%%%%%%%%%%%%%%%%%%%%%%%%%%%%%%%%%%%%%%

\section*{Affiliations}

Michael Speckbacher, Peter Balazs\\
Acoustics Research Institute\\
Austrian Academy of Sciences\\
Wohllebengasse 12-14\\
1040 Wien\\
Austria\\
E-mail: \email{speckbacher@kfs.oeaw.ac.at}, peter.balazs@oeaw.ac.at\\

Dominik Bayer\\
Universit{\"a}t der Bundeswehr M{\"u}nchen\\
Inst. f{\"u}r Mathematik und Informatik\\
Werner-Heisenberg-Weg 39\\
85577 Neubiberg\\
Germany\\
E-mail: dominik.bayer@unibw.de\\

Stephan Dahlke\\
Philipps-Universit{\"a}t Marburg\\
FB12 Mathematik und Informatik\\
Hans-Meerwein Stra{\ss}e\\
Lahneberge\\
35032 Marburg\\
E-mail: dahlke@mathematik.uni-marburg.de\\
WWW: http://www.mathematik,uni-marburg.de/$\sim$dahlke/

\end{document}